\numberwithin{equation}{section}
\newcommand{\EE}{\mathcal{E}}
\newcommand{\R}{\mathbb{R}}
\newcommand{\C}{\mathbb{C}}
\newcommand{\N}{\mathbb{N}}
\newcommand{\cN}{\mathcal{N}}
\newcommand{\cL}{\mathcal{L}}
\newcommand{\cZ}{\mathcal{Z}}
\newcommand{\e}{\mathbf{e}}
\renewcommand{\d}{\mathbf{d}}
\newcommand{\cF}{\mathcal{F}}
\newcommand{\cE}{\mathcal{E}}
\newcommand{\cR}{\mathcal{R}}
\newcommand{\cO}{\mathcal{O}}
\newcommand{\cW}{\mathcal{W}}
\newcommand{\cP}{\mathcal{P}}
\newcommand{\eps}{\varepsilon}
\newcommand{\dist}{\operatorname{dist}}
\newcommand{\Hd}{\dim_\mathrm{H}}
\renewcommand{\emptyset}{\varnothing}
\renewcommand{\epsilon}{\varepsilon}
\renewcommand{\rho}{\varrho}
\renewcommand{\phi}{\varphi}
\newcommand{\1}{\mathrm{\mathbf{1}}}
\renewcommand{\a}{\mathbf{a}}
\renewcommand{\b}{\mathbf{b}}
\renewcommand{\c}{\mathbf{c}}
\newcommand{\A}{\mathbf{A}}
\newcommand{\p}{\mathbf{p}}
\renewcommand{\hat}{\widehat}
\newcommand{\NN}{\mathcal{N}}
\renewcommand{\iint}{\int\hspace{-0.1in}\int}
\DeclareMathOperator{\\diam}{\diam}
\DeclareMathOperator{\diam}{diam}
\DeclareMathOperator{\spt}{spt}
\theoremstyle{plain}
\newtheorem{thm}{Theorem}[section]
\newtheorem{theorem}{Theorem}[section]
\newtheorem{lemma}[theorem]{Lemma}
\newtheorem{prop}[theorem]{Proposition}
\theoremstyle{definition}
\newtheorem{definition}[theorem]{Definition}
\newtheorem{notations}[theorem]{Notations}
\newtheorem{parameters}[theorem]{Parameters}
\theoremstyle{remark}
\newtheorem{remark}[theorem]{Remark}
\begin{document}

\title[Spectral gaps and overlaps]{Spectral gaps and Fourier dimension\\ for self-conformal sets with overlaps}

\author{Simon Baker}
\address{Department of Mathematical Sciences, Loughborough University, Loughborough, LE11 3TU, UK}
\email{simonbaker412@gmail.com}
\author{Tuomas Sahlsten}
\address{Department of Mathematics and Systems Analysis, Aalto University, Espoo, Finland \& Department of Mathematics, University of Manchester, Manchester, United Kingdom}
\email{tuomas.sahlsten@aalto.fi}

\thanks{S.B. is supported by an EPSRC New Investigator Award (EP/W003880/1). T.S. is supported by the Academy of Finland via the project \emph{Quantum chaos of large and many body systems}, grant Nos. 347365, 353738.} 

\begin{abstract}
We prove a uniform spectral gap for complex transfer operators near the critical line associated to \textit{overlapping} $C^2$ iterated function systems on the real line satisfying a Uniform Non-Integrability (UNI) condition. Our work extends that of Naud (2005) on spectral gaps for nonlinear Cantor sets to allow overlaps. The proof builds a new method to reduce the problem of the lack of Markov structure to average contraction of products of \textit{random} Dolgopyat operators. This approach is inspired by a disintegration technique developed by Algom, the first author and Shmerkin in the study of normal numbers. As a consequence of the method of the second author and Stevens, our spectral gap result implies that the Fourier transform of any non-atomic self-conformal measure decays to zero at a polynomial rate for any $C^{2}$ iterated function system satisfying UNI. This latter result leads to Fractal Uncertainty Principles with arbitrary overlaps.\end{abstract}

\maketitle

\section{Introduction}

\subsection{Spectral gaps of transfer operators with overlaps} The spectral theory of transfer operators on Banach spaces associated to hyperbolic dynamical systems provides a fundamental tool in mixing properties of Anosov flows \cite{Dolgopyat1,AGY,LiPan}, counting problems associated to periodic orbits (see e.g. \cite{PP} and references), renewal theory (e.g. \cite{Lalley,Li}), scattering resonances in quantum chaos \cite{Naud,Stoyanov,MageeNaud1,MageeNaud2, CalderonMagee}, and Fourier dimension and non-concentration estimates needed for sum-product bounds \cite{Li,SS}. This theory has mostly focused on transfer operators with a Markov partition, or in the language of iterated function systems (IFSs), with some separation condition. However, for many natural arising problems there is no Markov partition. For example in the study of higher dimensional dynamical systems such as solenoids and horseshoes giving rise to overlapping nonlinear hyperbolic- or parabolic IFSs with overlaps \cite{Solomyak2,Si2,Si1,P}, hyperbolic endomorphisms that are far from being automorphisms in their basic set \cite{Mi}, overlapping iterated function systems of fractional linear transformations arising from random matrix products \cite{Pincus,Lyons2}, and \textit{Bernoulli convolutions} that are self-similar measures on the real line with overlaps \cite{Shmerkin,Varju,Varju2}.

% However, many natural examples without a Markov partition and arbitrary overlaps arise naturally in the study of higher dimensional systems \cite{P, Si1, Si2}, hyperbolic endomorphisms that are far from being automorphisms in their basic set \cite{Mi} and overlapping iterated function systems (IFSs) of fractional linear transformations arising from random matrix products \cite{Pincus,Lyons2} and  \textit{Bernoulli convolutions} that are self-similar measures on the real line with overlaps.

It is very difficult to analyse overlapping IFSs. When considering parametrised families of IFSs a \textit{transversality technique} can be used to overcome the issues with overlaps. This technique has been successfully applied to many problems on Bernoulli convolutions, self-similar measures and more general non-linear IFSs (e.g. \cite{Solomyak2,Si1,Si2,Solomyak}). On the other hand, when studying a \textit{specific} IFS other tools need to be developed. Recently there has been an influx of new ideas using additive combinatorics and multiscale analysis, in particular Bourgain's discretised sum-product theory. These ideas have led to remarkable breakthroughs in the study of dimension theory and geometric properties of attractors associated to overlapping IFSs (e.g. \cite{Hochman, Shmerkin, HochmanBaranyRapaport, HochmanRapaport}). These advances in overlapping IFSs have almost exclusively focused on linear systems of self-similar or self-affine type, and the case of truly non-linear systems remains elusive except for parametrised families using the transversality method \cite{Solomyak2}. This is because in the case of systems with non-linearity many of the methods in the linear case (e.g. using convolution methods) do not transfer over, and one has to restrict the class of maps, e.g. as in the work of Hochman and Solomyak \cite{HochmanSolomyakInventiones}. The general $C^2$ overlapping IFS theory is still largely unexplored. 

In this article we will make fundamental steps towards understanding the dynamics and geometry of overlapping $C^2$ IFSs by proving a new spectral gap theorem for complex transfer operators associated to nonlinear $C^2$ IFSs with arbitrary overlaps. Adapting ideas from the non-overlapping case, we then apply this new spectral gap theorem to prove new Fourier dimension bounds and Fractal Uncertainty Principles \cite{Dyatlov,DyatlovZahl,BD1,BD2} for systems with arbitrary overlaps.

\begin{figure}[ht!]
\includegraphics[scale=0.25]{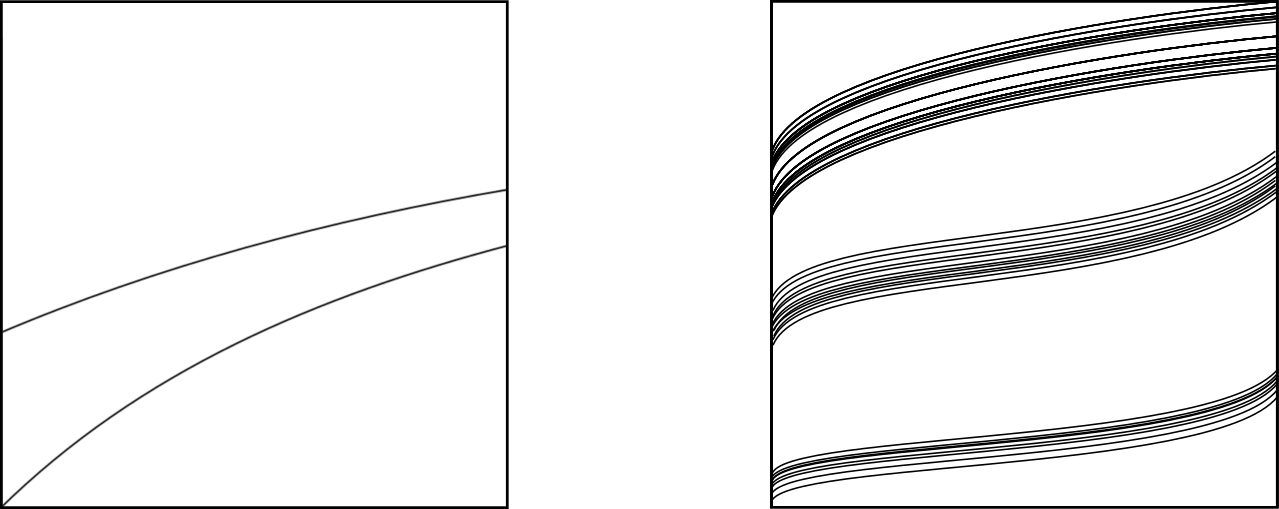}
\caption{Examples of overlapping non-linear $C^2$ IFSs on $I = [0,1]$, where left one is given by $\{\frac{x}{x+1},\frac{x+\alpha}{x+\alpha+1}\}$ with $\alpha = \frac{1}{2}$ considered by Lyons \cite{Lyons2}}
\label{fig:1}
\end{figure}

Our setting is an iterated function system $\Phi = \{\phi_a : a \in \A\}$ consisting of a finite number of $C^2$ contractions on an interval $I := [0,1]$, see e.g. Figure \ref{fig:1}. Then for a probability vector $(p_a)_{a \in \A}$ and $s = r+ib \in \C$, we associate a complex transfer operator $\cL_s : C^1(\R) \to C^1(\R)$ defined by
\begin{align}
\label{eq:operator}
\cL_{s} f(x) := \sum_{a \in \A} p_a |\varphi_{a}'(x)|^{s} f(\phi_a(x)), \quad x \in \R.
\end{align}
Such transfer operators arise naturally in the study of overlapping self-conformal measures of $\Phi$, which are eigenmeasures $\mu = \cL_0 \mu$. %If the elements of $\{\phi_a(I) : a \in \A\}$ would only meet in at most at a single point, then we could write at the interiors of $\phi_a(I)$ that 
%$$p_a |\varphi_{a}'(x)|^{s} = \exp(S_n(\psi-s\tau)(\phi_a(x))),$$ 
%where $S_n g(y) = \sum_{j = 0}^{n-1} g \circ T^j(y)$, $\psi \circ \phi_a \equiv p_a$ and $\tau := - \log |T'|$ for the expanding map $T$ with inverses $\phi_a$ in each $\phi_a(I)$, is a commonly used roof function in the study of mixing properties of dynamical systems with Markov partitions \cite{Dolgopyat1}. Here we do not assume any separation assumption that would translate to a Markov property, so we cannot use such notation in general.

If the IFS $\Phi$ is sufficiently separated (e.g. it satisfies the Strong Separation Condition) and the branches satisfy the \textit{Uniform Non-Integrability} (UNI) condition introduced by Chernov \cite{Chernov} and Dolgopyat \cite{Dolgopyat1}, that is, 
there exists $c_1,c_2>0$ such that for all $n$ sufficiently large, there exists $\a,\b \in \A^{n}$ such that the compositions $\phi_\a = \phi_{a_1} \circ \dots \circ \phi_{a_n}$ and $\phi_\b = \phi_{b_1} \circ \dots \circ \phi_{b_n}$ satisfy
\begin{align}\label{eq:UNI}c_1\leq \left|  \frac{\phi_\a''(x)}{\phi_\a'(x)} -\frac{\phi_\b''(x)}{\phi_\b'(x)}\right|\leq c_2, \quad \text{for all } x \in K,\end{align}
then it goes back to the work of Naud \cite{Naud} and Stoyanov \cite{Stoyanov} who adapted \textit{Dolgopyat's method} \cite{Dolgopyat1}, that the transfer operators in \eqref{eq:operator} have a \textit{spectral gap} on $C^{1,b}(\R)$ for $|b|$ large enough and $|r|$ small enough. Here $K$ is the attractor of $\Phi$, i.e. the unique non-empty compact set satisfying $K=\cup_{a\in \A}\varphi_{a}(K)$, and $C^{1,b}(\R)$ is the Banach space of $C^1$ functions on $\R$ with the norm 
$$\|f\|_b := \|f\|_\infty + |b|^{-1}\|f'\|_\infty.$$ The spectral gap one obtains has useful applications to many problems, for instance scattering resonances \cite{Naud} and exponential mixing of Anosov flows \cite{Stoyanov} such as the Teichm\"uller flow \cite{AGY}.

%, that is, there exists $\delta >0$ such that the $C^1$ operator norm 
%$$$\|\cL_{s}\|_{C^{1,b}(\R) \to C^{1,b}(\R)} \leq 1-\delta$$ 
%for $|b|$ large enough and $|r|$ small enough, where $C^{1,b}(\R)$ is the Banach space of $C^1$ functions on $\R$ with the norm 
%$$\|f\|_b := \|f\|_\infty + |b|^{-1}\|f'\|_\infty.$$ 

The UNI condition is satisfied by many examples of roof functions. Informally it is saying that the IFS is uniformly far from being a linear IFS. It is implied by the \textit{total non-linearity} of the inverse branches $\phi_a$, that is, $\Phi$ not being $C^2$ conjugated to a linear system, see \cite[Claims 2.12, 2.13]{AHW2} and \cite[Claim 2.2]{AHW3} for a proof. In the special case where the inverse branches are analytic, then UNI is implied by $\Phi$ not being conjugated to a self-similar iterated function system. The UNI condition can also be replaced with weaker conditions, such as the \textit{non-local integrability property} (NLI) \cite{Naud} or the weaker \textit{local non-integrability condition} (LNIC) in higher dimensions \cite{Stoyanov}. 

%The spectral gap one obtains has useful applications to many problems, for instance scattering resonances \cite{Naud} and exponential mixing of Anosov flows \cite{Stoyanov} such as the Teichm\"uller flow \cite{AGY}. 

%Moreover, if the IFS $\Phi$ consists of $C^\omega$ maps, then UNI follows from \textit{total non-linearity} of the inverse branches $\phi_a$, that is, $\Phi$ not being $C^2$ conjugated to a self-similar iterated function system, see \cite[Claims 2.12, 2.13]{AHW2} and \cite[Claim 2.2]{AHW3}. 
%as potentially uncontrollable amount of cycles are introduced into the symbolic coding
If we introduce overlaps into the IFS $\Phi$, the methods of Dolgopyat \cite{Dolgopyat1}, Naud \cite{Naud} and Stoyanov \cite{Stoyanov} that rely on the Markov partition do not apply.  At the same time, most of the advances in the overlapping IFSs are focused on linear IFSs, so it is be unclear how the methods from linear IFSs would benefit the nonlinear case. When we have some true non-linearity in the system manifesting in the UNI condition, having spectral gaps for transfer operators could still be possible, but we need to overcome the overlapping structure with new ideas. 

In this work we indeed prove a uniform spectral gap for the operators \eqref{eq:operator} without any conditions on the overlaps:

\begin{theorem}
\label{thm:Linftybound}
Let $\Phi = \{\phi_a : a \in \A\}$ be a non-trivial uniformly contracting $C^2$ iterated function system satisfying the UNI condition \eqref{eq:UNI}. Then there exists $0 < \rho_0 < 1$ such that for $s = r+ib \in \C$ with $|r|$ sufficiently small and $|b|$ sufficiently large, the operator $\cL_{s}$ satisfies for all $n \in \N$ and $f \in C^1(\R)$:
$$ \|\cL_{s}^n f\|_b \lesssim \rho_0^{n} |b|^{1/2} \|f\|_b.$$
Thus there exists $0 < \delta < 1$ such that for all $|r|$ sufficiently small and $|b|$ sufficiently large, the spectral radius satisfies
$$\rho(\cL_s)\leq 1-\delta.$$
\end{theorem}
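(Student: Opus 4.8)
The plan is to follow the philosophy of Dolgopyat--Naud--Stoyanov, but to replace the single Dolgopyat operator, which in the Markov case is built from a fixed cylinder structure, with a \emph{random} family of Dolgopyat operators, and then to obtain the spectral gap from an average-contraction estimate for long products of these random operators. The reduction goes as follows. First, as in the classical case, one shows that it suffices to establish a contraction estimate at the level of the $L^2$-norm against the conformal measure $\mu$: if for some fixed $N$ and some $\theta<1$ one has $\|\cL_s^N f\|_{L^2(\mu)}\le\theta\|f\|_{L^2(\mu)}$ for all $f$ lying in an appropriate invariant cone of functions with $\|f'\|_\infty\lesssim|b|\|f\|_\infty$, then iterating and combining with the trivial a~priori bound $\|\cL_s^nf\|_b\lesssim|b|^{1/2}\|f\|_b$ (which follows from the Lasota--Yorke--type inequality for $\cL_s$ on $C^{1,b}$) yields $\|\cL_s^nf\|_b\lesssim\rho_0^n|b|^{1/2}\|f\|_b$ for a suitable $\rho_0<1$, and hence $\rho(\cL_s)\le\theta^{1/N}<1$.

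The heart of the argument is the $L^2$ contraction. Here I would use the disintegration idea referenced in the abstract (Algom--Baker--Shmerkin): rather than partitioning the symbolic space into cylinders that map to \emph{disjoint} pieces of $K$, one disintegrates along a \emph{random} selection of branches so that, at each scale, the pieces one is tracking are essentially separated \emph{with high probability} with respect to the natural Bernoulli-type measure on $\A^{\N}$. Concretely, one writes $\cL_s^n$ as an average, over words $\a\in\A^n$ weighted by $p_\a$, of the single-branch operators $f\mapsto |\phi_\a'|^s f\circ\phi_\a$; the cancellation in $\sum_\a p_\a|\phi_\a'(x)|^{ib}f(\phi_\a(x))$ that one must exploit is governed by the oscillation of the phases $b\log|\phi_\a'|$, and the UNI condition \eqref{eq:UNI} guarantees that for a definite proportion of pairs $(\a,\b)$ the phase difference $b(\log|\phi_\a'|-\log|\phi_\b'|)$ winds by a controlled amount across intervals of length $\sim|b|^{-1}$. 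One encodes this via Dolgopyat operators $\cM_J$ depending on a choice function $J$ that, on each dyadic-type interval of size $|b|^{-1}$, selects one of two competing branches to damp; the UNI estimate shows each such operator contracts the $L^2(\mu)$ norm by a fixed factor $<1$ on the cone. The new point is that because there is no Markov partition, the collection of intervals and the admissible choice functions are themselves \emph{random} — drawn according to the disintegrated measure — so the contraction at scale $n$ is a product of $n/N_0$ random Dolgopyat operators, and one needs the \emph{average} over this randomness of the operator-norm of the product to be summably small. This is where a Borel--Cantelli / law-of-large-numbers argument along the lines of Algom--Baker--Shmerkin enters: one shows that, almost surely, a positive density of the scales are "good" (the two UNI branches are available and land in separated regions), and on good scales a genuine Dolgopyat operator acts, giving geometric decay.

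I expect the main obstacle to be precisely the absence of a Markov partition in controlling the interaction \emph{between} scales: in the classical Dolgopyat argument the cone $\{|f'|\le C|b||f|\}$ with $C$ fixed is preserved by a single step because composition of inverse branches contracts derivatives by a uniform factor, and the supports one tracks nest cleanly. Without separation, the "pieces" at scale $n$ overlap, so one must show that the cone condition and the measure-theoretic separation used to run the UNI cancellation survive the disintegration and can be propagated from scale $n$ to scale $n+N_0$; the random choice function at one scale must be compatible with (or at least not destroyed by) the overlaps coming from earlier scales. Making the "good scale" event have positive density uniformly in $b$ — and ensuring the constants in the Lasota--Yorke inequality, in the cone, and in the UNI-driven contraction are all independent of $b$ for $|b|$ large and of $r$ for $|r|$ small — is the delicate part, and is what forces the $|b|^{1/2}$ loss in the statement (it comes from the $C^1\to L^2(\mu)$ comparison at the finest scale $|b|^{-1}$). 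Once the average geometric decay of the random products is in hand, the passage back to $\|\cdot\|_b$ and then to the bound $\rho(\cL_s)\le 1-\delta$ is routine.
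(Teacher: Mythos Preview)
Your overall strategy matches the paper's: disintegrate $\cL_s^n$ into a weighted sum of products of ``random'' transfer operators, run a Dolgopyat-type contraction on the good products where a UNI pair of branches is available with positive density, control the bad products by a large-deviation bound, and then pass from the $L^2$/$L^\infty$ contraction back to $\|\cdot\|_b$ exactly as you describe.

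There is, however, one structural input you have not identified, and without it your ``separated with high probability'' picture is the wrong model and the argument would stall. The paper does \emph{not} argue that randomly chosen branches are separated with high probability; instead it proves (Proposition~\ref{prop:IFS partition}) that for some large $N$ one can \emph{deterministically} partition $\A^N$ into pieces $w^*,w_1,\ldots,w_m$, each of cardinality $2$ or $3$, such that the images of the maps within each $w_i$ are pairwise \emph{disjoint}, and the distinguished piece $w^*=\{\alpha_1,\alpha_2\}$ in addition satisfies the UNI inequality uniformly in the iterate. The disintegration (Lemma~\ref{lemma:operator disintegration}) is then $\cL_s^n=\sum_{w\in\Omega^n}q_w\,\cL_{s,w_n}\circ\cdots\circ\cL_{s,w_1}$ with $\Omega=\{w^*,w_1,\ldots,w_m\}$, and the Dolgopyat operator is the \emph{classical} one for the separated two-map IFS $\{\varphi_{\alpha_1},\varphi_{\alpha_2}\}$; it acts whenever the random word contains a block $(w^*)^N$, which occurs with positive density by Hoeffding. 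Thus the obstacle you flag---propagating separation and the cone across overlapping scales---is dissolved not probabilistically but by this partition: along each path in $\Omega^n$ there are \emph{no} overlaps at all.

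A second, related point: the $L^2$ norm is not taken against the self-conformal measure $\mu$. With overlaps $\mu$ need not be doubling, and doubling is essential for the Naud-style interval/tree construction and the ``dense subset'' argument that yields the $L^2$ contraction. Instead the paper integrates against auxiliary measures $\mu_{w}$ supported on the separated Cantor sets $K_w=\bigcup_{\a:\,a_i\in w_i}\varphi_\a(K^*)$, where $K^*$ is the attractor of $\{\varphi_{\alpha_1},\varphi_{\alpha_2}\}$; these $\mu_w$ are \emph{uniformly} doubling (Lemma~\ref{lma:doubling}) precisely because each $w_i$ is separated. Your proposal, as written, would need to supply both the partition of $\A^N$ and this family of reference measures before the Dolgopyat machinery can be run.
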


The novelty of this result comes from the way in which we overcome the arbitrary overlapping structure in the IFS. Our idea is to decompose the operator $\cL_s^n$ into a sum of \textit{random} compositions of transfer operators defined using a non-overlapping sub-IFS with nonlinearity. We then bound the norm of these compositions by using a family of \textit{random} Dolgopyat operators defined using the nonlinear sub-IFS. This decomposition idea is inspired by the work of Algom, the first author and Shmerkin \cite{ABS}, who disintegrated self-similar measures to study normal numbers. The way we build our decomposition suggests that our methods could be used in the study of IFSs that only contract on average \cite{Werner,Walkden}. Furthermore, as the theory of $C^2$ IFSs with overlaps is still in virgin territory, we believe the method we have built to prove Theorem \ref{thm:Linftybound} will provide an important step towards understanding the behaviour of $C^2$ IFSs with overlaps. %We note that while we work with Bernoulli measures $\p_a$, it is possible to also extend the proof for more general Gibbs measures with a H\"older continuous potential as in Stoyanov's work \cite{Stoyanov}, but for the sake of clarity we left the proof in the Bernoulli case.

\begin{remark}
We note that in the work \cite{AHW3} done simultaneously and independently of ours, Algom, Hertz and Wang also obtained a similar spectral gap theorem in the overlapping case, see \cite[Theorem 2.8]{AHW3}. They used this spectral gap theorem to prove an exponential decay rate in a renewal theorem leading to a Fourier decay theorem similar to the one we have in the next section (Theorem \ref{thm:main1}). We do not apply Theorem \ref{thm:Linftybound} to prove a renewal theorem, but instead use it to prove a non-concentration estimate that with a sum-product bound leads to a Fourier decay theorem. Interestingly, Algom, Hertz and Wang need the full strength of their Theorem 2.8 to prove their Fourier decay theorem. They need to consider $r<0$ to prove their renewal theorem, whereas to prove our non-concentration estimate we just need to consider the case where $r=0$.
\end{remark}

\begin{remark}
	Theorem \ref{thm:Linftybound} is formulated in terms of Bernoulli measures and their pushforwards. It seems likely that both proofs could be generalised to cover Markov measures and their pushforwards. The main obstacle that would need to be overcome is understanding how the Markov structure would interact with the partition of our IFS (see Proposition \ref{prop:IFS partition} for details of this partition). In particular, given an element $w$ in our partition, because we are now working with a Markov measure, the elements of the partition that may follow $w$ will now depend upon $w$. This in turn means that when we decompose our transfer operator as a sum of random transfer operators (see Lemma \ref{lemma:operator disintegration}), we would have to be more careful about which random transfer operators are allowed. If there is an element of our partition that is allowed to follow itself and satisfies an appropriate version of the UNI condition, then this element will play the role of $w^*$ in the statement of Proposition \ref{prop:IFS partition}, and our arguments should still work with only minor changes.
	
	Furthermore, it is natural to wonder whether the proof of Theorem \ref{thm:Linftybound} could be adapted to cover Gibbs measures and their pushforwards as in \cite{Stoyanov}. The authors expect that this is possible. However, our method for decomposing the transfer operator does not work for these more general measures, and so further ideas are needed. 
\end{remark}

\subsection{Fourier decay in overlapping $C^2$ IFSs}

Next, we want to move to an application of Theorem \ref{thm:Linftybound}, and in particular to the Fourier transforms of fractal measures. The study of Fourier transforms of fractal measures and their high-frequency asymptotics was historically initiated by questions on uniqueness of trigonometric series, metric number theory, Fourier multipliers and maximal operators defined by fractal measures (see e.g. \cite{JialunSahlsten1,JialunSahlsten2} for a historical overview). There were works on weaker average decay for Fourier transforms of self-similar measures by R. Strichartz \cite{Str} and M. Tsujii \cite{Tsujii}, and various works on specific constructions such as Fourier transforms of Bernoulli convolutions originating in Erd\"os' work \cite{Erdos}, random measure constructions e.g. using the Brownian motion \cite{FOS,FS}, and constructions in Diophantine approximation \cite{Kaufman}. These works suggested some form of pseudo-randomness of the underlying dynamical system should lead to the decay of the Fourier transform. This principle was verified in the article \cite{JordanSahlsten} by Jordan and the second author in the case of equilibrium states for the Gauss map where nonlinearity manifested in the distribution of the continuants of the continued fraction expansions, and then also in the subsequent article by Bourgain and Dyatlov \cite{BD1} on limit sets of Fuchsian groups. This latter article was motivated by proving a Fractal Uncertainty Principle to gain new information on scattering resonances in quantum chaos. 

Various thermodynamic, renewal theoretic and additive combinatoric techniques have been built which enable a systematic study of Fourier transforms of fractal measures using e.g. the under nonlinearity of the system. Since \cite{JordanSahlsten,BD1}, there has been a surge of activity in this topic in dynamics, metric number theory and fractal geometry to characterise measures with Fourier decay, such as for self-similar- and self-affine iterated function systems \cite{JialunSahlsten1,JialunSahlsten2,Solomyak,Bremont,VarjuYu,Rapaport}, self-conformal systems \cite{SS,AHW}, hyperbolic dynamical systems \cite{Leclerc1,Leclerc2,Leclerc3,Wormell}, fractal measures arising from random processes such as Brownian motion and Liouville quantum gravity \cite{FOS,FS,FalconerJin,ShmerkinSuomala}. 

The method used in \cite{SS} by the second author and Stevens to prove polynomial Fourier decay for certain self-conformal measures in dimension $1,$ was based upon the thermodynamic formalism method introduced in \cite{JordanSahlsten} combined with a corollary of a sum-product theorem as in \cite{BD1}. The required non-concentration assumption for the sum-product bound was verified using a spectral gap theorem for complex transfer operators due to Stoyanov \cite{Stoyanov}. Independently, Algom-Hertz-Wang \cite{AHW} proved that the Rajchman property holds for self-conformal measures with weaker assumptions but without polynomial Fourier decay. Similar ideas have been generalised by Leclerc \cite{Leclerc3} to hyperbolic attractors. These works leave open the possibility of overlaps and how the result would work in higher dimensions. There is motivation to study such problems, especially due to the need to generalise Fractal Uncertainty Principles \cite{BD1,BD2} to more general fractals arising from Anosov flows in order to optimise essential spectral gap bounds in \textit{variable} negatively curved manifolds to study quantum scattering problems in this situation, see Section \ref{sec:FUP} for more discussion.

In the higher dimensional case there has to be restrictions on the non-concentration of the measures on hyperplanes, see e.g. self-affine systems \cite{JialunSahlsten2} and the recent work by Khalil \cite{Khalil} on exponential mixing of the geodesic flow on a geometrically finite locally symmetric space of negative curvature with respect to the Bowen-Margulis-Sullivan measure, where Fourier decay results are studied under non-concentration on hyperplanes. In the overlapping \textit{self-similar} case, it is possible to obtain logarithmic Fourier decay \cite{JialunSahlsten1,Bremont,VarjuYu}, but the renewal theoretic method uses the Cauchy-Schwartz inequality in a way so that the non-concentration from purely derivatives is not strong enough to establish polynomial Fourier decay. 

Thanks to the spectral gap Theorem \ref{thm:Linftybound}, we will handle what happens when there are overlaps for self-conformal measures in the $C^2$ category:

\begin{theorem}\label{thm:main1}
Let $\mu$ be a non-atomic self-conformal measure associated to a $C^{2}$ iterated function system $\Phi$ on $\R$ satisfying the UNI condition \eqref{eq:UNI}. Then there exists $\alpha > 0$ such that
$$|\widehat{\mu}(\xi)| \lesssim |\xi|^{-\alpha}$$
for all $\xi \in \R$ with $|\xi| > 1$, where $\widehat{\mu}(\xi) := \int e^{-2\pi i \xi x} \, d\mu(x)$.
\end{theorem}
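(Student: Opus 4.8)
The plan is to follow the strategy of Sahlsten--Stevens \cite{SS}, replacing their use of Stoyanov's spectral gap \cite{Stoyanov} by our Theorem \ref{thm:Linftybound}, which is the only ingredient in that scheme that required a separation condition. First I would reduce to the case where $\mu$ is the pushforward under the coding map of a Bernoulli measure $(p_a)_{a\in\A}$ with $\mu$ non-atomic, and recall the large deviation / thermodynamic setup: writing $S_n\log|\phi_{\bf a}'|$ for Birkhoff sums of the geometric potential, one decomposes the oscillatory integral $\widehat\mu(\xi)=\int e^{-2\pi i\xi x}\,d\mu(x)$ by iterating the IFS $n\sim c\log|\xi|$ times, so that each cylinder has diameter comparable to a small negative power of $|\xi|$. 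Up to exponentially small error terms controlled by standard Gibbs estimates, this expresses $\widehat\mu(\xi)$ as a weighted sum over words ${\bf a}\in\A^n$ of phases $e^{-2\pi i\xi\phi_{\bf a}(x_{\bf a})}$, which in turn is governed by the complex transfer operator $\cL_s$ at $s=ib$ with $b\asymp\xi$ (after an affine normalization, $r=0$).

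The heart of the argument is the sum--product / non-concentration step as in Bourgain--Dyatlov \cite{BD1} and \cite{SS}: to get cancellation in $\sum_{\bf a} e^{-2\pi i\xi\phi_{\bf a}(x)}$ one needs the derivatives $\phi_{\bf a}'(x)$ (equivalently the Birkhoff sums of $\log|\phi'|$) to be non-concentrated at scale $|\xi|^{-1}$ in a suitable multiplicative/additive sense. Here is exactly where Theorem \ref{thm:Linftybound} enters: the required non-concentration estimate for the distribution of $\{S_n\log|\phi_{\bf a}'|\}$ at fine scales follows, as in \cite[\S 4]{SS}, from the spectral gap $\rho(\cL_s)\le 1-\delta$ for $|b|$ large and $|r|$ small — indeed, as the remark after Theorem \ref{thm:Linftybound} emphasizes, we only need the case $r=0$. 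Feeding this non-concentration bound into the sum--product theorem yields a gain $\sum_{\bf a} e^{-2\pi i\xi\phi_{\bf a}(x)}=O(|\A|^{n}|\xi|^{-\alpha_0})$ for some $\alpha_0>0$ depending only on $\Phi$ and $(p_a)$. Combining this with the thermodynamic decomposition and optimizing the choice of $n\asymp\log|\xi|$ produces the polynomial bound $|\widehat\mu(\xi)|\lesssim|\xi|^{-\alpha}$.

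The main obstacle I anticipate is not any single clean step but the bookkeeping needed to make the reduction to $\cL_s$ uniform: one must verify that the UNI condition \eqref{eq:UNI} propagates to the renormalized subsystems used in the $\log|\xi|$-fold iteration with constants independent of $|\xi|$, so that Theorem \ref{thm:Linftybound} applies with a single $\delta>0$ across all scales; and one must handle the non-atomicity hypothesis, which is precisely what guarantees the Gibbs-type lower bounds on cylinder masses that prevent the trivial part of the sum from dominating. A secondary technical point is that $\mu$ is self-conformal for a $C^2$ — not necessarily linear — IFS, so the derivative cocycle $\log|\phi_{\bf a}'|$ is only Hölder rather than locally constant; this is routine within the thermodynamic formalism but requires the bounded distortion property for $C^2$ contractions throughout. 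Since all of these are standard once the spectral gap is available, and \cite{SS} carries out the analogous argument in the separated case, the proof reduces to checking that \emph{nowhere} in \cite{SS} is the separation used except through the spectral gap — which our Theorem \ref{thm:Linftybound} now supplies without any assumption on overlaps.
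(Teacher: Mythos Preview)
Your proposal is correct and follows essentially the same route as the paper: deduce the non-concentration estimate needed for the Bourgain sum--product bound from the spectral gap of Theorem~\ref{thm:Linftybound} at $r=0$, and otherwise run the Sahlsten--Stevens machinery from \cite{SS} unchanged, the point being that separation is used in \cite{SS} only through Stoyanov's spectral gap. The paper's execution is somewhat more explicit about the multiscale block structure (iterating $(2k+1)n$ times with alternating regular blocks $\a_0\b_1\a_1\cdots\b_k\a_k$, defining the maps $\zeta_{j,\a}(\b)=e^{2\lambda n}|\varphi_{\a_{j-1}\b}'(x_{\a_j})|$, and proving a dedicated multiscale non-concentration proposition via Fourier inversion and a mollifier), and the non-atomicity enters specifically through the Frostman bound $\mu(B(x,r))\lesssim r^\kappa$ used to control the near-diagonal in the double integral, but these are exactly the details you would fill in when carrying out your sketch.
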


The way we approach bounding the Fourier transform of the measure $\mu$ in Theorem \ref{thm:main1} is based on iterating the self-conformal property of $\mu$ so that we can write an upper bound for $|\hat{\mu}(\xi)|^2$ that consists of an exponential sum over the regular blocks of words plus an error term that is small due to the large deviation bounds. This part is fundamentally the same as in \cite{SS}, which in turn was based upon combining the large deviation approach of Jordan and the second author \cite{JordanSahlsten} with the multiscale block decomposition by Bourgain and Dyatlov \cite{BD1}. To then control this exponential sum, we use, as in \cite{BD1,SS} a sum-product bound due to Bourgain \cite{Bourgain2010} that requires us to check a non-concentration hypothesis for the derivatives of the IFS. The main novelty comes in the proof of this  non-concentration property, which needs the new spectral gap Theorem \ref{thm:Linftybound} for the transfer operators $\cL_s$ with $\mathrm{Re}(s) = 0$ and $\mathrm{Im}(s) = c\xi$ for a suitable constant $c \in \R$. Thus we do not need information on $\cL_s$ outside of the critical line, and in fact here we only need an $L^\infty$ norm bound for $\cL_s^n f$ instead of a bound with respect to the $b$-norm. We note however that the proof of the $L^\infty$ norm bound at the critical line is the most non-trivial part of the proof of Theorem \ref{thm:Linftybound}.

%\begin{remark}
%We remark that independently to our work, Algom-Hertz-Wang \cite{AHW3} simultaneously have proved a similar result to Theorem \ref{thm:main1} using a different approach. They instead relied upon a renewal theorem argument that extends their earlier method from \cite{AHW,AHW2} but combining this with Dolgopyat's method leads to an exponential rate in the renewal theorem. For us adapting Dolgopyat's method for overlaps is central in proving the spectral gap property in Theorem \ref{thm:Linftybound} crucial for the non-concentration assumption for the sum-product theorem. Hence it would be interesting to compare the methods. Similar analogies also happen in the context of random walks on groups, see e.g. \cite{Li}.
%\end{remark}

\subsection{Fractal Uncertainty Principles and overlaps}\label{sec:FUP}

Finally, motivated by the work of Bourgain and Dyatlov \cite{BD1,BD2}, we discuss an application of Theorem \ref{thm:main1} to \textit{Fractal Uncertainty Principles} in quantum chaos, in particular, providing new examples where Markov structure can be avoided. Fractal Uncertainty Principles (FUPs) are a recently developed tool in harmonic analysis, which states that no function can be localised in both position and frequency near a fractal set, or more precisely: we say sets $X,Y \subset \R^d$ satisfy a \textit{Fractal Uncertainty Principle} at the scale $h > 0$ with exponent $\beta > 0$ and constant $C > 0$ if for all $f \in L^2(\R^d)$ with
$$ \{ \xi \in \R^d : \widehat{f}(\xi) \neq 0\} \subset h^{-1} Y \qquad \Rightarrow \qquad \|f\|_{L^2(X)} \leq Ch^{\beta} \|f\|_{L^2(\R^d)},$$
where $\widehat{f}(\xi) := \int_{\R^d} e^{-2\pi i x \cdot \xi } f(x) \, dx$, $\xi \in \R^d.$ When applied to $h$-neighbourhoods $X$ and $Y$ of fractals arising from hyperbolic dynamics, FUP has led to powerful applications in quantum chaos such as in bounding the essential spectral gaps and the $L^2$ mass of eigenfunctions of the Laplacian in open sets, and new control and observability theorems of PDEs \cite{DJ,DJN}. By a result of Bourgain and Dyatlov \cite{BD2}, \textit{porosity} (or Ahlfors-David regularity) of the sets $X$ and $Y$ in an interval of scales $[h,1]$ is enough to establish \textit{some} exponent $\beta > 0$ in the FUP, but quantifications especially for sets of dimension less than $1/2$ where additive combinatorics methods are used (e.g. by Dyatlov and Zahl \cite{DyatlovZahl} and Cladek and Tao \cite{CT}), require more structure from the fractal such as nonlinearity or curvature assumptions \cite{BD1}. 

If we consider systems \textit{without} porosity such as non-injective hyperbolic skew products with overlapping fibres \cite{Mi} or parabolic systems \cite{Lyons2} where holes may not appear uniformly at all scales, it would be interesting to see if FUP could be applied in such more general contexts. This could potentially have utility in quantum chaos related to such systems. In the following, we will consider FUP for sets $X$ and $Y$ that arise as neighbourhoods of fractals in $\R^d$ potentially without any porosity, but instead satisfy a Fourier decay condition and a mild Frostman regularity condition that is still possible even with arbitrary overlaps. 

We say a measure $\mu$ on $\R^d$ is $(C^-,\delta^-,C^+,\delta^+,h)$-\textit{Frostman} if:
\begin{itemize}
\item[(1)] For $r \in [h,1]$ and $x \in \R^d$ we have $\mu(B(x,r)) \leq C^+r^{\delta^+}$;
\item[(2)] For $r \in [h,1]$ and $x \in \spt \mu$ we have $\mu(B(x,r)) \geq C^-r^{\delta^-}$.
\end{itemize}
Here $\spt \mu$ is the support of the measure $\mu$. Note that all non-atomic self-conformal measures even with overlaps are $(C^-,\delta^-,C^+,\delta^+,h)$-Frostman for all small enough $h$, see e.g. \cite[Proposition 2.2]{FL} which has same proof in the self-conformal case using bounded distortions. The way Fourier decay connects to FUP can be observed in the following statement, it has a similar proof to that given in \cite{BD1} in the special case of limit sets of Fuchsian groups, but we extend it to ensure only the weaker Frostman condition is applied.
\begin{prop}\label{thm:fup}
For $j =1,2$, suppose $K_j = \spt \mu_j \subset \R^d$ are supports of $(C_j^-,\delta_j^-,C_j^+,\delta_j^+,h)$-Frostman measures. Assume also that for some $0 < \alpha \leq \delta_2^+ / 2$ we have:
$$|\widehat{\mu_2}(\xi)| \lesssim |\xi|^{-\alpha}, \quad |\xi| \leq \\diam(K_2) h^{-1}.$$
Let $X = K_1 + B(0,h)$ and $Y = K_2 + B(0,h)$. Then any $f \in L^2(\R^d)$ with
$$ \{ \xi \in \R^d : \widehat{f}(\xi) \neq 0\}  \subset h^{-1}Y \qquad \Rightarrow \qquad \|f\|_{L^2(X)} \lesssim_{C_1^-,C_2^\pm,\delta_1^-,\delta_2^-} h^{\frac{d}{2} - \frac{\delta_1^-}{2} - \frac{\delta_2^-}{2} + \frac{\alpha}{4}} \|f\|_{L^2(\R^d)}.$$
\end{prop}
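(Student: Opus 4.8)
The plan is to follow the classical $TT^*$/almost-orthogonality strategy used by Bourgain and Dyatlov, but to organise the estimates so that only the lower Frostman bound (2) on each $K_j$ is needed, rather than full Ahlfors--David regularity. First I would reduce to a pointwise bound on the relevant integral kernel. Write $f = \check g$ where $\widehat f = g$ is supported in $h^{-1}Y$; then $\|f\|_{L^2(X)}^2 = \langle \mathbf{1}_X f, f\rangle$, and after inserting the Fourier transform one is led to estimate the norm of the operator $B_h$ with kernel
\begin{equation*}
B_h(\xi,\eta) = \int_X e^{2\pi i x\cdot(\xi-\eta)}\,dx, \qquad \xi,\eta \in h^{-1}Y,
\end{equation*}
acting on $L^2(h^{-1}Y)$, so that $\|f\|_{L^2(X)}^2 \le \|B_h\|\,\|g\|_{L^2}^2 = \|B_h\|\,\|f\|_{L^2(\R^d)}^2$. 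Rescaling $\xi = h^{-1}\xi'$, $\eta = h^{-1}\eta'$ with $\xi',\eta'\in Y$, the kernel becomes $h^{-d}\widehat{\mathbf 1_X}\big(h^{-1}(\eta'-\xi')\big)$, and since $X = K_1 + B(0,h)$ we may replace Lebesgue measure on $X$, up to a constant, by $h^{d}\mu_1$ smoothed at scale $h$; the upshot is that one must bound an operator on $L^2(Y,\mu_2)$ (again using that $Y$ is an $h$-neighbourhood of $K_2 = \spt\mu_2$ and comparing Lebesgue measure on $Y$ with $h^d\mu_2$ via the lower bound (2) for $\mu_2$, which is where the factor $h^{-\delta_2^-}$ enters) whose kernel is essentially $\widehat{\phi_h}\big(h^{-1}(\eta'-\xi')\big)$ for a fixed Schwartz bump $\phi$ convolved against $\mu_1$.

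Next I would estimate this operator by Schur's test against the measure $\mu_2$. The off-diagonal decay comes from the Fourier decay hypothesis on $\mu_2$: for $|\zeta| \le \diam(K_2)h^{-1}$ we have $|\widehat{\mu_2}(\zeta)|\lesssim |\zeta|^{-\alpha}$, and integrating the kernel in one variable against $d\mu_2$ produces a convolution $\widehat{\phi_h \ast \mu_1} \cdot \widehat{\mu_2}$ evaluated near $h^{-1}$, whose $L^1(d\mu_2)$-type Schur sum is controlled by combining the polynomial decay $|\xi'-\eta'|^{-\alpha}$ (at the rescaled frequency $h^{-1}|\xi'-\eta'|$, giving $h^{\alpha}$) with the upper Frostman bound (1) for $\mu_1$ to sum the contribution of the bump $\phi_h$ over $K_1$. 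Carrying out the two Schur integrals — one in $\xi'$ and one in $\eta'$ — and bookkeeping the powers of $h$ from (i) the two changes of variables ($h^{-d}$), (ii) replacing Lebesgue by $h^d\mu_1$ on $X$ and comparing $\mathrm{Leb}|_Y$ with $h^{d}\mu_2$ through the lower bounds (2) for $\mu_1,\mu_2$ (producing $h^{-\delta_1^-}$ and $h^{-\delta_2^-}$), and (iii) the Fourier decay gain $h^{\alpha}$ — yields $\|B_h\| \lesssim h^{d - \delta_1^- - \delta_2^- + \alpha/2}$ after the optimisation one typically splits the diagonal block ($|\xi'-\eta'|\le h^{1-\sigma}$, say) from the off-diagonal block and balances the two, which is exactly what converts the raw exponent $\alpha/2$ into $\alpha/4$ and the $\delta_j^-$ into their halved counterparts in the final bound $h^{d/2 - \delta_1^-/2 - \delta_2^-/2 + \alpha/4}$. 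Taking square roots then gives the claimed estimate on $\|f\|_{L^2(X)}$.

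The constant dependence $\lesssim_{C_1^-,C_2^\pm,\delta_1^-,\delta_2^-}$ is tracked through exactly these steps: $C_2^\pm$ and $\delta_2^-$ enter when comparing Lebesgue measure on the $h$-annulus $Y$ with $\mu_2$ and when bounding the number of $h$-balls needed to cover, $C_1^-$ and $\delta_1^-$ when doing the same for $X$ (the upper constant $C_1^+$ only affects the implicit constant in $\lesssim$, not the $h$-power, which is why it is absorbed).

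I expect the main obstacle to be the careful passage between Lebesgue measure on the $h$-neighbourhoods $X,Y$ and the fractal measures $\mu_1,\mu_2$ smoothed at scale $h$: one must verify that $\mathbf 1_X \,dx$ is comparable, after convolution with an $h$-bump, to $h^{d}\,d\mu_1$ in a way that is uniform in $h$ and only costs a power governed by the \emph{lower} dimension $\delta_1^-$ (and similarly for $Y$), using a covering of $K_j$ by boundedly-overlapping $h$-balls whose cardinality is controlled via (2). This is the only place where it is essential that one works with a two-sided Frostman measure rather than a merely Ahlfors-regular set, and getting the exponents to come out as stated — in particular arriving at $\alpha/4$ rather than $\alpha/2$ — requires the diagonal/off-diagonal splitting to be done with the splitting scale chosen optimally in terms of $\alpha$. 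The remaining ingredients (Schur's test, the Fourier decay input, and the elementary bound $|\widehat{\phi_h}| \lesssim_N (1+|\cdot|)^{-N}$) are routine.
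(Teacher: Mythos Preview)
Your overall strategy (reduce to an operator norm, use $TT^*$, then Schur with a diagonal/off-diagonal split) is the right one and matches the paper's. However, there is a genuine gap at the point where you try to invoke the Fourier decay of $\mu_2$.

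You set up the $T^*T$ operator on $L^2(h^{-1}Y)$, whose kernel is $B_h(\xi,\eta)=\widehat{\mathbf 1_X}(\xi-\eta)$. After the rescaling and the heuristic replacement $\mathbf 1_X\,dx\approx h^d(\mu_1*\phi_h)$, your kernel is essentially $\widehat{\mu_1}\cdot\widehat{\phi_h}$, i.e.\ it carries information about $\mu_1$, not $\mu_2$. But the decay hypothesis is on $\widehat{\mu_2}$; there is no assumption whatsoever on $\widehat{\mu_1}$. Your proposed remedy --- ``integrating the kernel in one variable against $d\mu_2$ produces $\widehat{\phi_h*\mu_1}\cdot\widehat{\mu_2}$'' --- is only true \emph{without} absolute values, whereas Schur's test requires $\sup_{\xi'}\int|K(\xi',\eta')|\,d\mu_2(\eta')$. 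With the modulus in place you cannot extract $\widehat{\mu_2}$ from that integral, and the argument stalls. Relatedly, the replacement of $\mathbf 1_X$ by $h^d(\mu_1*\phi_h)$ is not valid at the level of Fourier transforms: these functions are only comparable in a weak (mass-on-balls) sense, not pointwise, so $\widehat{\mathbf 1_X}$ and $h^d\widehat{\mu_1}\widehat{\phi_h}$ can differ drastically.

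The paper fixes this by organising the problem so that the $TT^*$ kernel \emph{is} $\widehat{\mu_2}$ pointwise. Concretely, one replaces $\mathbf 1_X$ by the weight $\Upsilon_X^-(x)=\tfrac{1}{4}h^{-\delta_1^-}\mu_1(B(x,2h))$ (and similarly $\mathbf 1_Y$ by $\Upsilon_Y^-$), which is legitimate since $\Upsilon_X^-\asymp 1$ on $X$ by the lower Frostman bound. Fubini then turns the $\Upsilon_X^-$-weighted Lebesgue integral into an average over $|t|\le 2h$ of genuine $\mu_1$-integrals of translates, and likewise for $Y$. One is reduced to bounding the operator $B_t u(x)=\int e^{2\pi i(x-t)\cdot y/h}u(y)\,d\mu_2(y)$ from $L^2(\mu_2)$ to $L^2(\mu_1)$; now $B_tB_t^*$ has kernel $K(z,z')=\widehat{\mu_2}\big((z-z')/h\big)$, which can be bounded pointwise by the hypothesis. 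Schur's test and the split at $|z-z'|=h^{1/2}$ then give $\|B_t\|^2\lesssim h^{\alpha/2}+h^{\delta_2^+/2}\lesssim h^{\alpha/2}$ (using $\alpha\le\delta_2^+/2$), and the bookkeeping of $h$-powers from the $\Upsilon$ weights produces the stated exponent $\tfrac{d}{2}-\tfrac{\delta_1^-}{2}-\tfrac{\delta_2^-}{2}+\tfrac{\alpha}{4}$. The missing idea in your sketch is precisely this Fubini-with-translates device, which converts Lebesgue on the $h$-neighbourhood into the fractal measure itself rather than attempting the comparison on the Fourier side.
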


We can now combine this with Theorem \ref{thm:main1} so that we obtain a wide class of non-porous and overlapping fractals such as basic sets of overlapping self-conformal sets satisfying FUP:

\begin{theorem}\label{cor:main}
Let $K_1,K_2 \subset \R$ be any non-trivial self-conformal sets for $C^2$ IFSs. Assume that the IFS associated to $K_2$ satisfies the UNI condition \eqref{eq:UNI}. Then there exists $\alpha > 0$ depending only on the IFS associated to $K_2$ such that FUP holds at the scale $h > 0$ for $X = K_1 + B(0,h)$ and $Y = K_2 + B(0,h)$ with $\beta = \frac{1}{2} - \frac{\delta_1^-}{2} - \frac{\delta_2^-}{2} + \frac{\alpha}{4}$, where 
$$\delta_j^- = \max\{\overline{\Hd}\, \mu : \mu \text{ is a self-conformal measure on } K_j\},$$
for $\overline{\Hd}\, \mu = \mathrm{ess\,sup}_{x \in \R} \limsup_{r \to 0} \log \mu(B(x,r))/\log r$.
\end{theorem}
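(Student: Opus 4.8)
The plan is to deduce Theorem \ref{cor:main} by combining the Fourier decay Theorem \ref{thm:main1} with the abstract Fractal Uncertainty Principle of Proposition \ref{thm:fup}, the only extra ingredient being the Frostman regularity of self-conformal measures with overlaps. First, for $j = 1,2$ I would fix a self-conformal measure $\mu_j$ on $K_j$ that is \emph{fully supported}, $\spt\mu_j = K_j$, and realises the maximum in the definition of $\delta_j^-$, that is $\overline{\Hd}\,\mu_j = \delta_j^-$; such a measure exists because $K_j$ is a non-trivial self-conformal set (one may take a fully supported measure of maximal dimension), and since $\delta_j^- > 0$ it is automatically non-atomic. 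By the Frostman property of non-atomic self-conformal measures noted just after the definition of the Frostman condition — the self-conformal analogue of \cite[Proposition 2.2]{FL}, obtained from bounded distortion — there are constants $C_j^-,C_j^+$ and an exponent $\delta_j^+ > 0$ so that $\mu_j$ is $(C_j^-,\delta_j^-,C_j^+,\delta_j^+,h)$-Frostman for all small enough $h > 0$; the relevant feature is that, because $\mu_j$ has maximal dimension, its lower Frostman exponent may be taken to be precisely $\delta_j^- = \overline{\Hd}\,\mu_j$ (a general self-conformal measure would only admit a lower Frostman bound with a possibly larger exponent, which is why maximality is used).

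Next I would apply Theorem \ref{thm:main1} to the non-atomic self-conformal measure $\mu_2$: since the IFS defining $K_2$ is $C^2$ and satisfies the UNI condition, there is $\alpha_0 > 0$, depending only on that IFS, with $|\widehat{\mu_2}(\xi)| \lesssim |\xi|^{-\alpha_0}$ for $|\xi| > 1$. Set $\alpha := \min\{\alpha_0,\tfrac12\delta_2^+\} > 0$; this still depends only on the IFS associated to $K_2$, it satisfies the constraint $\alpha \le \tfrac12\delta_2^+$ of Proposition \ref{thm:fup} by construction, and $|\widehat{\mu_2}(\xi)| \lesssim |\xi|^{-\alpha}$ for all $\xi$ (the range $|\xi| \le 1$ being trivial from $|\widehat{\mu_2}(\xi)| \le 1$), so in particular the Fourier decay hypothesis of Proposition \ref{thm:fup} holds throughout $|\xi| \le \diam(K_2)h^{-1}$. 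Feeding $\mu_1,\mu_2$ and the sets $X = K_1 + B(0,h)$, $Y = K_2 + B(0,h)$ into Proposition \ref{thm:fup} with $d = 1$ then yields: every $f \in L^2(\R)$ with $\{\xi \in \R : \widehat{f}(\xi) \neq 0\} \subset h^{-1}Y$ satisfies $\|f\|_{L^2(X)} \lesssim h^{\frac12 - \frac{\delta_1^-}{2} - \frac{\delta_2^-}{2} + \frac{\alpha}{4}}\|f\|_{L^2(\R)}$, with an implied constant depending on the Frostman data of $\mu_1,\mu_2$ but not on $h$. This is exactly FUP at scale $h$ with the asserted exponent $\beta = \frac12 - \frac{\delta_1^-}{2} - \frac{\delta_2^-}{2} + \frac{\alpha}{4}$; for the remaining range of $h$ bounded away from $0$ (where the Frostman estimates may not yet be in force) FUP with this $\beta$ holds trivially by absorbing $h^\beta$ into the constant, so one obtains it uniformly for all $h \in (0,1]$.

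All the substance is in Theorems \ref{thm:Linftybound}--\ref{thm:main1} and Proposition \ref{thm:fup}, so the one point that really has to be checked carefully is the handling of the Frostman measures: that the maximiser defining $\delta_j^-$ can be chosen with support exactly equal to $K_j$ — otherwise $X$ and $Y$ would only be $h$-neighbourhoods of proper subsets of $K_1$ and $K_2$ — and that for this maximal-dimension measure the \emph{two-sided} Frostman estimate holds with lower exponent exactly $\delta_j^-$ down to the scale $h$. This is where the homogeneity and bounded distortion of the self-conformal systems are used, in the same way as in the proof of \cite[Proposition 2.2]{FL}; everything else is routine bookkeeping of constants and exponents.
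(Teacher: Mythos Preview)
Your proposal is correct and follows the same route as the paper: Theorem \ref{cor:main} is obtained by feeding the Fourier decay bound of Theorem \ref{thm:main1} for a suitably chosen self-conformal measure $\mu_2$ on $K_2$, together with the Frostman regularity of self-conformal measures (the bounded-distortion analogue of \cite[Proposition~2.2]{FL}), into the abstract FUP statement Proposition \ref{thm:fup} with $d=1$. The paper does not spell out a separate proof of Theorem \ref{cor:main} beyond this combination, and your write-up correctly identifies the one point that needs care, namely that the maximising measures $\mu_j$ can be taken with $\spt\mu_j=K_j$ and that the lower Frostman exponent can be taken equal to $\delta_j^-=\overline{\Hd}\,\mu_j$.
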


Most of the proof of the Fourier decay theorem (Theorem \ref{thm:main1}) that implies Fractal Uncertainty Principle applies also in higher dimensions. In higher dimensions we would need a replacement for the sum-product bound, and a projective one by Li \cite{JialunRd} would be natural here. However, to prove the projective non-concentration for the derivatives would require us to have an assumption on avoiding concentration to hyperplanes. For example, in higher dimensions, FUP cannot work for even all porous sets, e.g. using the line segments $X_h = \R \times [-h,h]$ and $Y_h = [-h,h] \times \R$. In two dimensions, it is possible to obtain a Fractal Uncertainty Principle by using the Fourier decay of the Patterson-Sullivan measure like in the case of Fuchsian groups \cite{BD1} and in two dimensions by Li-Naud-Pan \cite{JialunNaudPan}, see also Leclerc's recent work \cite{Leclerc1} involving twisted transfer operators and bunched attractors \cite{Leclerc3}. Moreover, adapting Dolgopyat's method, which also lies at the heart of proving a spectral gap for complex transfer operators \cite{Naud,Stoyanov}, Backus-Leng-Tao \cite{BLT} proved a Fractal Uncertainty Principle for limit sets of Kleinian groups in $\mathbb{H}^d$ with exponent $d/2-\Hd F + \eps$, where $\Hd F$ is the Hausdorff dimension of the limit set. This generalised the 1D approach of Dyatlov and Jin \cite{DJ2} who obtained a similar result using Dolgopyat's method. A generalisation of Corollary \ref{cor:main} to higher dimensions applied to subshifts of finite type would allow for a higher dimensional Fractal Uncertainty Principle with a similar exponent as Backus, Leng and Tao. We also make note of a recent remarkable work of Cohen \cite{Cohen2} on proving a higher dimensional Fractal Uncertainty Principle for line porous fractals without any dimension assumptions. 

We believe a UNI assumption for all directions similar to the one considered in \cite{AGY} for the exponential mixing of the Teichm\"uller flow would provide a suitable analogue where the results of this paper would hold, see also \cite{Khalil}. We plan to investigate this in a future work.

\subsection*{Organisation of the article} In Section \ref{sec:symb} we go through the basic symbolic notation we need. In Section \ref{sec:naud} we prove the new spectral gap theorem. Then in Section \ref{sec:proofmain} we prove Theorem \ref{thm:main1} on Fourier decay using the spectral gap of transfer operators. Finally in Section \ref{sec:fup} we give the Fractal Uncertainty Principle argument in $\R^d$.

\subsection*{Notation} We collect here some notational conventions that we will adopt throughout this article. Given two real valued functions $f,g$ defined on a set $S$. We write $f\lesssim g$ if there exists a constant $c>0$ such that $f(x)\leq cg(x)$ for all $x\in S$. We write $f\sim g$ if $f\lesssim g$ and $g\lesssim f$. We will also on occasion write $f=\cO(g)$ to mean the same thing as $f\lesssim g$.

\section{Symbolic notations of $C^2$ IFSs}\label{sec:symb}

Let $\{\varphi_a : I \to I\}_{a\in \A}$, $I = [0,1]$, $\A$ finite, be a $C^2$ iterated function system (IFS) acting on $\mathbb{R}$ satisfying the following properties:
\begin{itemize}
\item[(1)] \textit{Uniform contraction}: There exists $1 < \gamma < \gamma_1$ such that for all $x \in I$ and $n \in \N$, if $(a_1,\ldots,a_n)\in \A^n$ then
$$\gamma_1^{-n} \lesssim |(\varphi_{a_1}\circ \cdots \circ \varphi_{a_n})'(x)| \lesssim  \gamma^{-n}.$$
\item[(2)] \textit{Bounded distortions}: For all $x,y \in I$ we have
 	$$\frac{|\varphi_a'(x)|}{|\varphi_a'(y)|} \leq B.$$
 	\item[(3)] \textit{Non-trivial}: The unique non-empty compact set $K$ satisfying $$K=\bigcup_{a\in \A}\varphi_{a}(K)$$ is not a singleton.
\item[(4)] \textit{Uniform Non-Integrability} (UNI): We say that $\Phi$ satisfies the uniform non-integrability condition if there exists $c_1,c_2>0$ such that for all $n$ sufficiently large, there exists $\a,\b \in \A^{n}$ such that the compositions $\phi_\a = \phi_{a_1} \circ \dots \circ \phi_{a_n}$ and $\phi_\b = \phi_{b_1} \circ \dots \circ \phi_{b_n}$ satisfy
\begin{align*} c_1\leq \left|  \frac{\phi_\a''(x)}{\phi_\a'(x)} -\frac{\phi_\b''(x)}{\phi_\b'(x)}\right|\leq c_2, \quad \text{for all } x \in K.\end{align*}
\end{itemize}

Given a probability vector $\p=(p_a)_{a\in \A}$ ($0 < p_a < 1$ and $\sum_{a \in \A} p_a = 1$), there exists a unique Borel probability measure $\mu_{\p}$ satisfying
$$\mu_\p = \sum_{a \in \A} p_a f_a \mu_p.$$
$\mu_{\p}$ is called a \textit{self-conformal measure}. When the choice of $\p$ is implicit we will simply denote $\mu_\p$ by $\mu$. 

We now take the opportunity to introduce some tree notation. We let $\A^*=\cup_{n=1}^{\infty}\A^n$ denote the set of finite words over the alphabet $\A$. Given $\a=(a_1,\ldots, a_n)\in \A^*$ we let $$\varphi_{\a}=\varphi_{a_1}\circ \cdots \circ \varphi_{a_n}\qquad\textrm{ and }\qquad p_{\a}=\prod_{i=1}^{n}p_{a_i}.$$ We also let $$[\a]:=\{\b\in \A^{\N}:b_i=a_i \textrm{ for }1\leq i\leq n\}$$ denote the cylinder set associated to $\a.$ We let $\pi:\A^{\N}\to K$ be the usual projection map given by $$\pi(\a)=\lim_{n\to\infty}(\varphi_{a_1}\circ \cdots \circ \varphi_{a_n})(0).$$ Given a probability vector $\p=(p_a)_{a\in \A}$ we denote by $m_{\p}:=\p^{\mathbb{N}}$ the product measure on $\A^{\mathbb{N}}$. $m_{\p}$ are $\mu_{\p}$ are connected via the equation $\mu_{\p}=\pi \mu_{\p}$. %Last of all, given a word $\a\in \A^*$ we let $$I_{\a}=\varphi_{\a}(I).$$

\section{Proof of the spectral gap theorem}\label{sec:naud}

We cannot directly apply the argument of Naud \cite{Naud} due to the potential overlaps coming from the IFS. To overcome this issue we use a disintegration argument due to Algom, the first author and Shmerkin \cite{ABS}. In this paper the authors showed that one could disintegrate a self-similar measure $\mu$ into measures that looked like self-similar measures for well separated IFSs. We employ a similar idea, however instead of disintegrating the measure $\mu$, we in effect ``disintegrate" the transfer operator and introduce a class of \textit{random} Dolgopyat operators.

\subsection{Partitioning the IFS and random transfer operators.}

The following proposition guarantees the existence of a useful partition of our IFS. Roughly speaking, this partition splits our IFS into non-trivial sub-IFSs each one of which is well separated. Moreover, there exists one special sub-IFS that satisfies a suitable uniform non-integrability condition. 

\begin{prop}
	\label{prop:IFS partition}
Let $\{\varphi_{a}\}_{a\in \A}$ be a non-trivial IFS satisfying the UNI condition. Then there exists $N\in\mathbb{N}$ and $w^*,w_1,\ldots, w_m\subset \A^N$ such that the following properties are satisfied:
\begin{enumerate}
	\item[(1)] $w^*\cup w_1\cup \cdots \cup w_m=\A^{N}. $ Moreover, this union is disjoint.
%	\item $$\bigcup_{\a\in w_i}[\a] \cap \bigcup_{\a\in w_j}[\a]=\emptyset \textrm{ for }i\neq j \qquad \textrm{and}\qquad   \bigcup_{\a\in w_i}[\a] \cap \bigcup_{\a\in w^*}[\a]=\emptyset \textrm{ for all }1\leq i\leq m.$$ 
	\item[(2)]  $ \sharp w_i\in \{2,3\}$ for $1\leq i\leq m$.
	\item[(3)]  For any $1\leq i\leq m$, for distinct $\a,\b\in w_i$ we have $$\varphi_{\a}(I)\cap \varphi_{\b}(I)=\emptyset.$$
	\item[(4)]  $w^*=\{\alpha_1,\alpha_2\}$ and these words satisfy:
	\begin{itemize}
		\item[a.] $$\varphi_{\alpha_1}(I)\cap \varphi_{\alpha_2}(I)=\emptyset.$$
		\item[b.] There exists $c_1,c_2,\delta>0$ such that for all $x\in \{x:d(x,K)<\delta\}$ and $l\in\mathbb{N}$ we have $$c_1\leq \left|\frac{\varphi_{\alpha_{1}^{l}}''(x)}{\varphi_{\alpha_{1}^{l}}'(x)}-\frac{\varphi_{\alpha_{2}^{l}}''(x)}{\varphi_{\alpha_{2}^{l}}'(x)}\right|\leq c_2.$$
		
		%$$c_1\leq \left|\frac{d}{dx}(\log|\varphi_{\alpha_{1}^l}'(x)|-\log|\varphi_{\alpha_{2}^l}'(x)|)\right|\leq c_2.$$
		\item[c.] $p_{\alpha_1}=p_{\alpha_2}.$
	\end{itemize} 
	
\end{enumerate}
\end{prop}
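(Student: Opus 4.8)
\emph{Strategy.} The plan is to first isolate the distinguished pair $w^{*}=\{\alpha_{1},\alpha_{2}\}$ by bootstrapping the UNI hypothesis, and then to produce the blocks $w_{1},\dots,w_{m}$ by a soft combinatorial argument carried out at a sufficiently deep level $N$.

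\emph{Preliminaries.} Since $K$ is not a singleton we have $\sharp\A\ge 2$, and the maps cannot all share a common fixed point (that point would otherwise be the attractor), so there exist $a_{0}\ne b_{0}$ with distinct fixed points. A direct computation then shows that the fixed point of $\varphi_{a_{0}}\circ\varphi_{b_{0}}$ differs from that of $\varphi_{b_{0}}\circ\varphi_{a_{0}}$, so that for $k$ large the words $P_{1}:=(a_{0}b_{0})^{k}$ and $P_{2}:=(b_{0}a_{0})^{k}$ in $\A^{2k}$ satisfy $\varphi_{P_{1}}(I)\cap\varphi_{P_{2}}(I)=\varnothing$ while $p_{P_{1}}=p_{P_{2}}$. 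Throughout I will use, for $\eta_{u}:=\varphi_{u}''/\varphi_{u}'$, the cocycle identity $\eta_{uv}=\eta_{v}+\varphi_{v}'\cdot(\eta_{u}\circ\varphi_{v})$, together with the bounds $\|\eta_{u}\|_{\infty}\le E'$ and $\Lip(\eta_{u})\le E''$, which hold uniformly over all words $u$ by telescoping and summing the geometric series produced by uniform contraction and bounded distortion.

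\emph{The distinguished pair.} UNI provides, for each large $n$, words $\alpha_{1},\alpha_{2}\in\A^{n}$ with $c_{1}\le|\eta_{\alpha_{1}}-\eta_{\alpha_{2}}|\le c_{2}$ on $K$. Replacing them by the anagrams $\alpha_{1}\alpha_{2}$ and $\alpha_{2}\alpha_{1}$ (which have the same $p$-weight, as $p$ is multiplicative over letters), and then pre-composing with $P_{1}$ and $P_{2}$, yields for $n,k$ large a \emph{single} pair $\beta_{1}:=P_{1}\alpha_{1}\alpha_{2}$, $\beta_{2}:=P_{2}\alpha_{2}\alpha_{1}$ in $\A^{M}$ with $M=2k+2n$, satisfying $\varphi_{\beta_{1}}(I)\cap\varphi_{\beta_{2}}(I)=\varnothing$, $p_{\beta_{1}}=p_{\beta_{2}}$, and $|\eta_{\beta_{1}}-\eta_{\beta_{2}}|\ge c_{1}/4$ on $K$ (two applications of the cocycle identity, each modification perturbing $\eta$ only by $O(\gamma^{-n})$); the uniform Lipschitz bound upgrades this to $|\eta_{\beta_{1}}-\eta_{\beta_{2}}|\ge c_{1}/8$ on a fixed neighbourhood of $K$. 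Finally $\eta_{\beta^{l}}$ is a partial sum of the uniformly convergent series $\eta_{\beta}^{*}:=\sum_{j\ge 0}\varphi_{\beta^{j}}'\cdot(\eta_{\beta}\circ\varphi_{\beta^{j}})$, so $\|\eta_{\beta^{l}}-\eta_{\beta}\|_{\infty}\le\varepsilon_{M}$ for all $l\ge 1$, with $\varepsilon_{M}\to 0$ as $M\to\infty$; hence $|\eta_{\beta_{1}^{l}}-\eta_{\beta_{2}^{l}}|$ stays in a fixed interval $[\,c_{1}/16,\,2E'\,]$ on a neighbourhood of $K$, uniformly in $l$. This is (4)(a)--(c) for $\{\beta_{1},\beta_{2}\}$; these words will be raised to a common power to reach level $N$.

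\emph{The remaining blocks.} Let $\mu_{0}$ be the self-conformal measure of the uniform probability vector; a short argument shows it is non-atomic (its atoms of maximal mass form a finite set invariant under each $\varphi_{a}$, hence, by uniqueness, equal to $K$ --- impossible, as a contraction has no finite invariant set with more than one point). Non-atomicity gives $\sup_{\a\in\A^{N}}\mu_{0}(J_{\a}^{+})\to 0$ as $N\to\infty$, where $J_{\a}:=\varphi_{\a}(I)$ and $J_{\a}^{+}$ is its $C\gamma^{-N}$-neighbourhood. Since any level-$N$ cylinder meeting $J_{\a}$ lies in $J_{\a}^{+}$, and $\mu_{0}(J_{\a}^{+})\ge(\sharp\A)^{-N}\,\sharp\{\b\in\A^{N}:\varphi_{\b}(K)\subseteq J_{\a}^{+}\}$, every vertex of the intersection graph $G_{N}$ on $\A^{N}$ has degree $o((\sharp\A)^{N})$. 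Choosing $N$ to be a large multiple of $M$, I set $w^{*}:=\{\beta_{1}^{N/M},\beta_{2}^{N/M}\}$ --- which still satisfies (4), since $(\beta_{i}^{N/M})^{l}=\beta_{i}^{\,lN/M}$ --- and observe that the complement graph $\overline{G_{N}}$ restricted to $\A^{N}\setminus w^{*}$ has minimum degree exceeding half its order. By Dirac's theorem it has a Hamilton cycle; removing one triangle (which exists because the minimum degree is large) when $\sharp(\A^{N}\setminus w^{*})$ is odd, this decomposes $\A^{N}\setminus w^{*}$ into edges and at most one triangle of $\overline{G_{N}}$, which are the blocks $w_{1},\dots,w_{m}$. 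Together with $w^{*}$ they partition $\A^{N}$ into sets of size $2$ or $3$ consisting of words with pairwise disjoint cylinders, so (1)--(4) hold.

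\emph{Main difficulty.} The delicate step is the construction of the distinguished pair, where disjointness (4)(a), equality of weights (4)(c), and the \emph{$l$-uniform} non-integrability (4)(b) must be arranged \emph{simultaneously} for one and the same pair; this is what forces the anagram and pre-composition bookkeeping together with the control $\eta_{\beta^{l}}\to\eta_{\beta}^{*}$. The combinatorial step rests on the observation that non-atomicity of the uniform self-conformal measure prevents the cylinders from clustering, making $\overline{G_{N}}$ nearly complete for $N$ large.
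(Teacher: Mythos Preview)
Your proof is correct, and the construction of the distinguished pair $w^{*}$ follows the same template as the paper's: both use the cocycle identity $\eta_{uv}=\eta_{v}+\varphi_{v}'\cdot(\eta_{u}\circ\varphi_{v})$ to show that prefixing and the anagram trick perturb $\eta_{\alpha_{1}}-\eta_{\alpha_{2}}$ only by $O(\gamma^{-n})$, and both pass from the bound on $K$ to a neighbourhood by continuity. The paper's words are $\alpha_{1}=\d\b\c\a$, $\alpha_{2}=\c\a\d\b$ for a separating pair $\c,\d$ obtained directly from non-triviality, which is slightly more direct than your $P_{1},P_{2}$ detour through fixed points; conversely, your device of working at level $M$ and then passing to $w^{*}=\{\beta_{1}^{N/M},\beta_{2}^{N/M}\}$ decouples the choice of $N$ from $M$, which is convenient. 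One small point: your claim $\Lip(\eta_{u})\le E''$ would need the IFS to be $C^{3}$; with $C^{2}$ you only get uniform continuity of $\eta_{u}$, but this is all that is needed to pass to a $\delta$-neighbourhood of $K$ (and is exactly what the paper invokes).

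The combinatorial step is where the two arguments genuinely diverge. The paper appeals to a quantitative Frostman bound $\mu_{0}(B(x,r))\le Cr^{\alpha}$ (via Feng--Lau) to bound the number of level-$N$ cylinders meeting a given one, then orders the words by left endpoint and pairs $\a_{i}$ with $\a_{i+T_{N}}$ for an explicit gap $T_{N}$; leftover words are appended to early pairs to form triples. Your argument replaces this by the qualitative observation that non-atomicity of $\mu_{0}$ forces the maximum degree of the intersection graph $G_{N}$ to be $o((\sharp\A)^{N})$, so that the complement $\overline{G_{N}}$ restricted to $\A^{N}\setminus w^{*}$ has minimum degree $(1-o(1))|V|$, and then invokes Dirac's theorem to extract a Hamilton cycle (hence a perfect matching) or, in the odd case, a triangle plus a matching. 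This is more conceptual and avoids the Frostman input entirely; the paper's route is more explicit and yields a quantitative clustering bound which, however, is not needed for the proposition. One detail you should make explicit: after removing a triangle, Dirac's hypothesis on the remaining $|V|-3$ vertices is preserved not because minimum degree exceeds $|V|/2$ (which would fail after losing three edges) but because it is in fact $(1-o(1))|V|$.
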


\begin{proof}
We begin our proof by remarking that for any $\a\in \A^*$ we have 
\begin{equation}
	\label{e:differentUNI}
	\frac{d}{dx}\log|\varphi_{\a}'(x)|=\frac{\varphi_{\a}''(x)}{\varphi_{\a}'(x)}.
\end{equation}
Therefore the UNI condition \eqref{eq:UNI} is equivalent to the following: there exists $c_1,c_2>0$ such that for all $M$ sufficiently large, there exists $\a,\b\in \A^M$ such that 	
\begin{equation}
	\label{eq:EquiUNI}
	c_1\leq \left| \frac{d}{dx}(\log|\varphi_{\a}'(x)|-\log|\varphi_{\b}'(x)|)\right|\leq c_2, \qquad \text{for all }x\in K.
	\end{equation}
 What makes \eqref{eq:EquiUNI} easier to work with is the following useful identity that follows from two applications of the chain rule: for any $\a,\b,\c,\d\in \A^*$ and $x\in I$ we have
\begin{align*}	\frac{d}{dx}(\log|\varphi_{\a\c}'(x)|-\log|\varphi_{\b\d}'(x)|)	=&\frac{d}{dx}(\log|\varphi_{\c}'(x)|-\log|\varphi_{\d}'(x)|)\\
	+&\varphi_{\c}'(x)\left(\frac{d}{dx}\log|\varphi_{\a}'|\right)(\varphi_{\c}(x))-\varphi_{\d}'(x)\left(\frac{d}{dx}\log|\varphi_{\b}'|\right)(\varphi_{\d}(x)).
\end{align*} Using this identity and appealing to a bounded distortion argument, it can be shown that there exists $C>0,$ such that for any $\a,\b,\c,\d\in \A^*$ and $x\in I$ we have
\begin{equation}
	\label{e:transferring nonlinearity}
	\left|\frac{d}{dx}(\log|\varphi_{\a\c}'(x)|-\log|\varphi_{\b\d}'(x)|)-\frac{d}{dx}(\log|\varphi_{\c}'(x)|-\log|\varphi_{\d}'(x)|)\right| \leq C\gamma^{-\min\{|\c|,|\d|\}}.
\end{equation}
By our non-triviality assumption, there exists $L\in \mathbb{N}$ and  $\c,\d\in \A^{L}$ such that $\varphi_{\c}(I)\cap\varphi_{\d}(I)=\emptyset$. We let $\alpha_1=\d\b\c\a$ and $\alpha_2=\c\a\d\b$ where $\a,\b\in \A^{M}$ satisfy \eqref{eq:EquiUNI}. We immediately have that $\varphi_{\alpha_1}(I)\cap\varphi_{\alpha_2}(I)=\emptyset$ and $p_{\alpha_1}=p_{\alpha_2}$. Moreover, by \eqref{e:transferring nonlinearity} it follows that for any $M$ sufficiently large we have $$c_1/2\leq \left|\frac{d}{dx}(\log|\varphi_{\alpha_{1}}'(x)|-\log|\varphi_{\alpha_{2}}'(x)|)\right|\leq 2c_2$$ for all $x\in K$. It follows now by a continuity argument that for all $M$ sufficiently large, there exists $\delta_{M}>0$ such that $$c_1/3\leq \left|\frac{d}{dx}(\log|\varphi_{\alpha_{1}}'(x)|-\log|\varphi_{\alpha_{2}}'(x)|)\right|\leq 3c_2$$ for all $x\in \{x:d(x,K)<\delta_{M}\}.$ We can now appeal to \eqref{e:transferring nonlinearity} again to assert that in fact for any $M$ sufficiently large, for any $l\geq 1$ we have 
\begin{equation}
	\label{eq:lUNI}
	c_1/4\leq \left|\frac{d}{dx}(\log|\varphi_{\alpha_{1}^l}'(x)|-\log|\varphi_{\alpha_{2}^l}'(x)|)\right|\leq 4c_2
\end{equation} for all $x\in \{x:d(x,K)<\delta_{M}\}.$. 
%By the uniform non-integrability property \eqref{eq:UNI} as $\frac{d}{dx} \log |x| = 1/x$, there exists $c_1,c_2>0$ such that for all $M\in \N$ sufficiently large there exists $\a,\b\in \A^{M}$ satisfying $$c_1\leq \left|\frac{d}{dx}(\log|\varphi_{\a}'(x)|-\log|\varphi_{\b}'(x)|)\right|\leq c_2$$ for all $x\in I$. 

Let $N=2(M+L)$. Summarising the above, we have shown that for any $M$ sufficiently large, there exists  $w^*=\{\alpha_1,\alpha_2\}\subset \A^{N}$ such that properties $4a,$ $4b,$ and $4c$ hold. Property $4b$ holds because of \eqref{e:differentUNI} and \eqref{eq:lUNI}. It remains to show that for $M$ sufficiently large we can construct $w_1\ldots w_m\subset \A^{N}\setminus w^*$ so that properties $1, 2$ and $3$ are satisfied.

Let $\mu_{uni}$ be the self-conformal measure corresponding to the uniform probability vector $\p=(\sharp \A^{-1})_{\a\in \A}$. Adapting an argument of Feng and Lau \cite{FL} to the setting of self-conformal measures, there exists $C>0$ and $\alpha>0$ such that 
\begin{equation}
	\label{eq:uniform frostman}
	\mu_{uni}(B(x,r))\leq Cr^{\alpha}
\end{equation}
 for all $x\in \mathbb{R}$. Since our IFS is uniformly contracting, there exists $\gamma>1$ and $C_{1}>0$ such that $$|\varphi_{\a}(I)|\leq C_{1}\gamma^{-N}$$ for any $\a\in \A^{N}$. Using this inequality together with \eqref{eq:uniform frostman} and the fact $p_{\a}=\sharp \A^{-N}$ for all $\a\in \A^{N}$ yields
  \begin{equation}
 	\label{eq:count intersections}
 	\sharp\left\{\a\in \A^{N}: \varphi_{\a}(I)\cap B(x,C_{1}\gamma^{-N})\neq \emptyset\right\}\leq 3CC_{1}^{\alpha}\sharp\A^{N}\gamma^{-N\alpha}
 \end{equation} for any $x\in\mathbb{R}$.

Now let $\{\a_i\}_{i=1}^{\sharp\A^{N}-2}$ be an enumeration of the elements of $\A^{N}\setminus w^*$ such that if $i<j$ then the left endpoint of $\varphi_{\a_i}(I)$ lies to the left of the left endpoint of $\varphi_{\a_j}(I)$. We also let 
\begin{equation}
	\label{eq:T_N def}
	T_{N}:=\lfloor 3CC_{1}^{\alpha}\sharp\A^{N}\gamma^{-N\alpha}\rfloor +1.
\end{equation} The significance of the parameter $T_{N}$ is that if $j\geq i+T_{N}$ then $$\varphi_{\a_i}(I)\cap \varphi_{\a_j}(I)=\emptyset.$$ This fact follows form \eqref{eq:count intersections}.  

For each $0\leq j\leq  \lfloor \frac{\sharp \A^N-2}{2T_{N}}\rfloor-1$ and $1\leq k\leq T_{N}$ we let $$\tilde{w}_{j,k}:=\{\a_{k+2jT_{N}},\a_{k+2jT_{N}+T_{N}}\}.$$ Notice that for any $0\leq j\leq  \lfloor \frac{\sharp \A^N-2}{2T_{N}}\rfloor-1$ and $1\leq k\leq T_{N},$ because the subscripts of $\a_{k+2jT_{N}}$ and $\a_{k+2jT_{N}+T_{N}}$ differ by $T_{N}$ we have
$$\varphi_{\a_{k+2jT_{N}}}(I)\cap\varphi_{\a_{k+2jT_{N}+T_{N}}}(I)=\emptyset.$$
Moreover, $\tilde{w}_{j,k}\cap \tilde{w}_{j',k'}=\emptyset$ for $(j,k)\neq (j',k')$. Our proof is almost complete, it remains to allocate those elements of $\{\a_i\}_{i=2T_{N}\lfloor \frac{\sharp \A^{N}-2}{2T_{N}}\rfloor+1}^{\sharp \A^{N}-2}$ to appropriate subsets of $\A^{N}$. The cardinality of $\{\a_i\}_{i=2T_{N}\lfloor \frac{\sharp \A^{N}-2}{2T_{N}}\rfloor+1}^{\sharp \A^{N}-2}$ is at most $2T_{N}$. Therefore to each $2T_{N}\lfloor \frac{\sharp \A^{N}-2}{2T_{N}}\rfloor+1\leq i\leq \sharp \A^{N}-2$ we can associate a unique pair $(j_i,k_i)$ satisfying $0\leq j\leq 1$ and $1\leq k\leq T_{N}.$ Notice that the largest subscript for a word $\a_i$ contained in $\tilde{w}_{j,k}$ for some $0\leq j\leq 1$ and $1\leq k\leq T_{N}$ is $4T_{N}$. It follows from \eqref{eq:T_N def} that for $M$ sufficiently large, for any $i$ satisfying $2T_{N}\lfloor \frac{\sharp \A^{N}-2}{2T_{N}}\rfloor+1\leq i\leq \sharp \A^{N}-2$ we have $$i-4T_{N}\geq  2T_{N}\lfloor \frac{\sharp \A^{N}-2}{2T_{N}}\rfloor+1-4T_{N}\geq T_{N}.$$ Therefore $$\varphi_{\a_{i}}(I)\cap \varphi_{\a'}(I)=\emptyset$$ for any $i$ satisfying $2T_{N}\lfloor \frac{\sharp \A^{N}-2}{2T_{N}}\rfloor+1\leq i\leq \sharp \A^{N}-2$ and $\a'\in \tilde{w}_{j_i,k_i}$. To each $i$ satisfying $2T_{N}\lfloor \frac{\sharp \A^{N}-2}{2T_{N}}\rfloor+1\leq i\leq \sharp \A^{N}-2$ we associate the subset $\{\a_i\}\cup \tilde{w}_{j_i,k_i}$. Taking $w_1,\ldots,w_m$ to be the subsets $\{\a_i\}\cup \tilde{w}_{j_i,k_i}$ together with the remaining unchanged $\tilde{w}_{j,k}$, we see now that properties $1$, $2$, and $3$ are satisfied. This completes our proof.

\end{proof}
Proposition \ref{prop:IFS partition} now allows us to make a significant simplification in our proof of Theorem \ref{thm:Linftybound}. To prove Theorem \ref{thm:Linftybound} it suffices to show that the same bound holds for $\cL_{s}^{N}$ for some large $N\in \N$. Now using the fact that $\cL_{s}^{N}$ coincides with the transfer operator corresponding to the IFS $\{\varphi_{\a}\}_{\a\in \A^{N}},$ we can apply Proposition \ref{prop:IFS partition} to assert that without loss of generality our original IFS $\{\varphi_{a}\}_{a\in \A}$ is such that there exists $w^*,w_1,\ldots,w_m\subset \A$ satisfying the following properties: 
\begin{enumerate} 
\item $w^*\cup w_1\cup \cdots \cup w_m=\A. $ Moreover, this union is disjoint.

\item  $ \sharp w_i\in \{2,3\}$ for $1\leq i\leq m$.
\item For any $1\leq i\leq m$, for distinct $a,b\in w_i$ we have $$\varphi_{a}(I)\cap \varphi_{b}(I)=\emptyset.$$
\item $w^*=\{\alpha_1,\alpha_2\}$ and these words satisfy:
\begin{itemize}
	\item[a.] $$\varphi_{\alpha_1}(I)\cap \varphi_{\alpha_2}(I)=\emptyset.$$
	\item[b.] There exists $c_1,c_2,\delta>0$ such that for all $x\in \{x:d(x,K)<\delta\}$ and $l\in\mathbb{N}$ we have $$c_1\leq \left|\frac{\varphi_{\alpha_{1}^{l}}''(x)}{\varphi_{\alpha_{1}^{l}}'(x)}-\frac{\varphi_{\alpha_{2}^{l}}''(x)}{\varphi_{\alpha_{2}^{l}}'(x)}\right|\leq c_2.$$
	%There exists $c_1,c_2>0$ such that for all $x\in I$ and $l\in\mathbb{N}$ we have 
	%$$c_1\leq \left|\frac{\varphi_{\alpha_{1}^{l}}''(x)}{\varphi_{\alpha_{1}^{l}}'(x)}-\frac{\varphi_{\alpha_{2}^{l}}''(x)}{\varphi_{\alpha_{2}^{l}}'(x)}\right|\leq c_2.$$
	%$$c_1\leq \left|\frac{d}{dx}(\log|\varphi_{\alpha_{1}^l}'(x)|-\log|\varphi_{\alpha_{2}^l}'(x)|)\right|\leq c_2.$$
	\item[c.] $p_{\alpha_1}=p_{\alpha_2}.$
\end{itemize} 

\end{enumerate}

We now introduce some more notation to complement this partition of $\A$. We let $$\Omega:=\{w^*,w_1,\ldots,w_m\}\quad \textrm{ and }\quad \Omega^*:=\cup_{n=1}^{\infty}\Omega^n.$$ Moreover, for $w\in \Omega$ and a finite word $w_1\ldots w_n\in \Omega^*$ we let $$q_{w}=\sum_{a\in w}p_a\qquad \text{ and }\qquad q_{w_1\ldots w_n}=\prod_{i=1}^{n}q_{w_i}.$$ For each $w\in \Omega$ and $a\in w$ we let $$p_{a,w}=\frac{p_a}{q_w}.$$ We emphasise that $\sum_{a\in w}p_{a,w}=1$. Moreover, it is a consequence of our assumption that $p_{\alpha_1}=p_{\alpha_2}$ that we also have $$p_{\alpha_1,w^*}=p_{\alpha_2,w^*}=\frac{1}{2}.$$

Now each $w$ is equipped with a probability vector, we can define the associated transfer operators. For $f\in C^1(\mathbb{R})$ and $w\in \Omega$ let $$\cL_{w}(f)=\sum_{a\in w}p_{a,w}f(\varphi_{a}(x)).$$ We similarly define their complex analogues $$\cL_{s,w}(f)=\sum_{a\in w}p_{a,w}|\varphi_{a}'(x)|^{s}f(\varphi_{a}(x)).$$ The operator $\cL_{s,w^*}$ will exhibit interesting behaviour. We emphasise that this operator will take the following simpler form $$\cL_{s,w^*}(f)=\sum_{a\in w^*}\frac{1}{2}|\varphi_{a}'(x)|^{s}f(\varphi_{a}(x)).$$ Our proof of Theorem \ref{thm:Linftybound} relies upon us establishing an appropriate spectral gap result for this operator. The following disintegration lemma will allow us to take this spectral gap information for $\cL_{s,w^*}$ and use it to derive spectral gap information for our original complex transfer operator $\cL_{s}$. 
\begin{lemma}
	\label{lemma:operator disintegration}
For $n\in\mathbb{N}$ and $s\in \mathbb{C}$ we have $$\cL_{s}^{n}=\sum_{w=w_1\ldots w_n\in \Omega^n}q_{w}\cdot \cL_{s,w_n}\circ \cdots \circ \cL_{s,w_1}.$$
\end{lemma}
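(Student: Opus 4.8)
The statement is purely algebraic — it is an identity of operators on $C^1(\R)$ — so the plan is to prove it by induction on $n$ using the defining decomposition $\A = w^* \sqcup w_1 \sqcup \cdots \sqcup w_m$ and the relation $p_a = q_w \, p_{a,w}$ for $a \in w$. First I would record the base case $n=1$: by definition $\cL_s f(x) = \sum_{a \in \A} p_a |\varphi_a'(x)|^s f(\varphi_a(x))$, and splitting the sum over $\A$ according to which block $w \in \Omega$ contains $a$, and writing $p_a = q_w p_{a,w}$, gives
$$\cL_s f = \sum_{w \in \Omega} q_w \sum_{a \in w} p_{a,w} |\varphi_a'|^s f(\varphi_a(\cdot)) = \sum_{w \in \Omega} q_w \, \cL_{s,w} f,$$
which is exactly the claimed formula for $n=1$ since $q_{w} = q_w$ for a length-one word.

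For the inductive step, assume the identity holds for $n$. Then $\cL_s^{n+1} = \cL_s \circ \cL_s^n$, and applying the induction hypothesis followed by the $n=1$ case,
$$\cL_s^{n+1} = \cL_s \circ \Big(\sum_{w_1\ldots w_n \in \Omega^n} q_{w_1\ldots w_n}\, \cL_{s,w_n}\circ\cdots\circ\cL_{s,w_1}\Big) = \sum_{w_0 \in \Omega}\; \sum_{w_1\ldots w_n \in \Omega^n} q_{w_0}\, q_{w_1\ldots w_n}\; \cL_{s,w_0}\circ \cL_{s,w_n}\circ\cdots\circ\cL_{s,w_1}.$$
Here I have to be slightly careful about the indexing convention: in the statement the composition is $\cL_{s,w_n}\circ\cdots\circ\cL_{s,w_1}$, i.e. the \emph{first} letter of the word is applied \emph{first}. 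So when I write $\cL_s^{n+1} = \cL_s^n \circ \cL_s$ instead — applying one more transfer operator on the \emph{inside} — the new letter becomes $w_{n+1}$, the last letter, and I get $\cL_{s,w_{n+1}}\circ \cL_{s,w_n}\circ\cdots\circ\cL_{s,w_1}$ with $w_1\ldots w_{n+1} \in \Omega^{n+1}$ and coefficient $q_{w_1\ldots w_n}\, q_{w_{n+1}} = q_{w_1\ldots w_{n+1}}$ by the multiplicativity $q_{w_1\ldots w_{n+1}} = \prod_{i=1}^{n+1} q_{w_i}$. Relabelling the summation index as a word $w = w_1\ldots w_{n+1}$ ranging over $\Omega^{n+1}$ closes the induction.

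The only genuine point to get right — and hence the "main obstacle", though it is minor — is the bookkeeping of composition order: whether one expands $\cL_s^{n+1}$ as $\cL_s \circ \cL_s^n$ or as $\cL_s^n \circ \cL_s$, and correspondingly whether the new block is prepended or appended to the word. Writing $\cL_s^{n+1} = \cL_s^n \circ \cL_s$ is the clean choice since it matches the "first letter applied first" convention and makes $q_{w_1\ldots w_{n+1}} = q_{w_1\ldots w_n} q_{w_{n+1}}$ fall out of the definition of $q$ on words. Everything else — the splitting of $\sum_{a\in\A}$ along $\Omega$ and the identity $p_a = q_w p_{a,w}$ — is immediate from the definitions introduced just before the lemma, and linearity of all the operators involved means there are no convergence or analytic issues to address (the sums are finite).
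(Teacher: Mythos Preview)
Your proof is correct and rests on the same algebraic identity as the paper's: the partition $\A=\bigsqcup_{w\in\Omega}w$ together with $p_a=q_w\,p_{a,w}$. The only difference is packaging --- you run an induction on $n$, whereas the paper does the computation in one shot by writing $\cL_s^n f=\sum_{\a\in\A^n}p_\a|\varphi_\a'|^s f\circ\varphi_\a$, splitting the sum over $\A^n$ according to $(w_1,\ldots,w_n)\in\Omega^n$, and recognising the inner sum as $(\cL_{s,w_n}\circ\cdots\circ\cL_{s,w_1})f$; your slightly tangled discussion of whether to write $\cL_s^{n+1}=\cL_s\circ\cL_s^n$ or $\cL_s^n\circ\cL_s$ is a symptom of the induction being marginally less clean here, but either order works after relabelling and the argument goes through.
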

\begin{proof}
Let $n\in\mathbb{N},$ $s\in \mathbb{C}$ and $f\in C^{1}(\mathbb{R})$. We observe the following:
\begin{align*}
	\cL_{s}^{n}(f)=\sum_{\a\in \A^n}p_{\a}|\varphi_{\a}'(x)|^sf(\varphi_{\a}(x))&=\sum_{w=w_1\ldots w_n\in \Omega^n}\sum_{\stackrel{a_1\ldots a_n\in \A^n}{a_i\in w_i,\, 1\leq i\leq n}}p_{\a}|\varphi_{\a}'(x)|^sf(\varphi_{\a}(x))\\
	& =\sum_{w=w_1\ldots w_n\in \Omega^n}q_{w}\sum_{\stackrel{a_1\ldots a_n\in \A^n}{a_i\in w_i,\, 1\leq i\leq n}}\frac{p_{\a}}{q_w}|\varphi_{\a}'(x)|^sf(\varphi_{\a}(x))\\
	&=\sum_{w=w_1\ldots w_n\in \Omega^n}q_{w}\cdot (\cL_{b,w_n}\circ \cdots \circ \cL_{b,w_1})(f).
\end{align*}	
\end{proof}

We finish this discussion of $\Omega$ and its properties by introducing some notation. We let $K^*$ be the unique non-empty compact set satisfying $$K^*:=\varphi_{\alpha_1}(K^*)\cup \varphi_{\alpha_2}(K^*),$$ i.e. $K^*$ is the self-conformal set for the IFS $\{\varphi_{\alpha_1},\varphi_{\alpha_2}\}$. We also let $\mu_{*}$ be the unique Borel probability measure satisfying $$\mu_{*}=\frac{\mu_{*}\circ \varphi_{\alpha_1}^{-1}}{2}+\frac{\mu_{*}\circ \varphi_{\alpha_2}^{-1}}{2}.$$ For any word $w=w_1\ldots w_n\in \Omega^*$ we let $$K_{w}:=\bigcup_{\stackrel{\a\in \A^n}{a_i\in w_i,\, 1\leq i\leq n}}\varphi_{\a}(K^*)$$ and $$\mu_{w}:=\sum_{\stackrel{\a\in \A^n}{a_i\in w_i,\, 1\leq i\leq n}}p_{\a,w}\cdot \mu_{*}\circ \varphi_{\a}^{-1}.$$ Notice that for any word $w\in\Omega^*$ the measure $\mu_{w}$ is a probability measure supported on $K_{w}$. These measures will play a similar role to that of stationary measures when one considers compositions of a single transfer operator. %We also remark that $K^*$ and $\mu_{*}$ could have been replaced by any sufficiently ``nice" fractal and measure. Indeed for the purposes of our analysis, we could have taken $K^*$ to be the middle third Cantor set and $\mu_{*}$ to be the $(1/2,1/2)$ self-similar measure supported on $K^*$.

\subsection{A spectral gap for random transfer operators} \label{sec:proofspect}

The purpose of this section is to prove Theorem \ref{thm:Linftybound} whose statement we now recall. 
%Recall that for $f\in C([0,1])$ and $s \in \C$ we defined
%$$\cL_{s} f (x)=\sum_{a\in \A}p_{\a}|\varphi_{a}'(x)|^sf(\varphi_{a}(x))$$
%and for $w\in \Omega$ also 
%$$\cL_{s,w}f(x)=\sum_{a\in w}p_{a,w}|\varphi_{a}'(x)|^{s}f(\varphi_{a}(x)).$$ 
%Then by Lemma \ref{lemma:operator disintegration} for any $n\in\mathbb{N}$ we can write
%$$\cL_{s}^{n}=\sum_{w=w_1\ldots w_n\in \Omega^n}q_{w}\cdot \cL_{s,w_n}\circ \cdots \circ \cL_{s,w_1}.$$
%We also use the following notation for the $s = 0$ case:
%$$\cL_{w} f(x) := \cL_{0,w} f(x) = \sum_{a\in w}p_{a,w}f(\varphi_{a}(x))$$
%Our aim is to prove Theorem \ref{thm:Linftybound}, that is:

\begin{thm}
	\label{thm:Linftybound2}
Let $\Phi = \{\phi_a : a \in \A\}$ be a non-trivial uniformly contracting $C^2$ iterated function system satisfying the UNI condition \eqref{eq:UNI}. Then there exists $0 < \rho_0 < 1$ such that for $s = r+ib \in \C$ with $|r|$ sufficiently small and $|b|$ sufficiently large, the operator $\cL_{s}$ satisfies for all $n \in \N$ and $f \in C^1(\R)$:
$$ \|\cL_{s}^n f\|_b \lesssim \rho_0^{n} |b|^{1/2} \|f\|_b.$$
Thus there exists $0 < \delta < 1$ such that for all $|r|$ sufficiently small and $|b|$ sufficiently large, the spectral radius satisfies
$$\rho(\cL_s)\leq 1-\delta.$$
\end{thm}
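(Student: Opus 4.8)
The plan is to exploit the disintegration in Lemma \ref{lemma:operator disintegration}, which writes $\cL_s^n = \sum_{w \in \Omega^n} q_w \, \cL_{s,w_n}\circ\cdots\circ\cL_{s,w_1}$ with $\sum_{w\in\Omega^n} q_w = 1$ (each $q_w$ being the $m_{\mathbf q}$-measure of the cylinder $[w]$ in the random walk over $\Omega$). By the triangle inequality it suffices to obtain a uniform-in-$w$ bound of the form $\|\cL_{s,w_n}\circ\cdots\circ\cL_{s,w_1} f\|_b \lesssim \rho_0^n |b|^{1/2}\|f\|_b$, or more realistically an \emph{on-average} bound $\sum_{w\in\Omega^n} q_w \|\cL_{s,w_n}\circ\cdots\circ\cL_{s,w_1} f\|_b \lesssim \rho_0^n|b|^{1/2}\|f\|_b$, since only a definite fraction of the letters $w_i$ in a typical word will be $w^*$. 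The argument splits, exactly as in Naud's treatment, into an a priori (Lasota–Yorke type) bound controlling $\|f'\|_\infty$ in terms of $|b|\,\|f\|_\infty$ plus a contracted copy of $\|f'\|_\infty$, which reduces the whole statement to an $L^\infty$-bound $\|\cL_{s,w_n}\circ\cdots\circ\cL_{s,w_1} f\|_\infty \lesssim \rho_0^n \|f\|_\infty$ valid for $f$ in the invariant cone of functions with $\|f'\|_\infty \leq C|b|\,\|f\|_\infty$; the $|b|^{1/2}$ factor enters from the standard Cauchy–Schwarz/$L^2$-to-$L^\infty$ step at the end.

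The core is then a Dolgopyat-type $L^\infty$ contraction for the \emph{random} compositions. First I would establish, using the oscillation built into property (4b) of the partition — the genuine non-integrability $c_1 \leq |\tfrac{d}{dx}(\log|\varphi_{\alpha_1^l}'| - \log|\varphi_{\alpha_2^l}'|)| \leq c_2$ for the distinguished pair $w^* = \{\alpha_1,\alpha_2\}$ — a single-step gain whenever a block of consecutive $w^*$'s of bounded length $\ell_0$ occurs: on such a block the two phases $|b|\log|\varphi_{\alpha_1}'|$ and $|b|\log|\varphi_{\alpha_2}'|$ differ by an oscillation of size $\sim |b|$ across cylinders of size $\sim |b|^{-1}$, so in the sum $\tfrac12(\pm\, \pm)$ there is genuine cancellation and the $L^\infty$ norm contracts by a factor $1-\epsilon_0 < 1$ on a definite fraction of the base interval. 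This is implemented via \emph{random Dolgopyat operators}: for each realization $w$ one defines operators $N_{J,w}$ acting on the cone, mapping it into itself, with the property that $\|N_{J,w} f\|_{L^2(\mu_w)} \leq (1-\epsilon_0)\|f\|_{L^2(\mu_w)}$ whenever the block pattern forces cancellation, and $\cL_{s,w}$ is dominated pointwise by $N_{J,w}$ applied to the relevant majorant. The stationary measures $\mu_w$ introduced at the end of Section 3.1 are exactly the reference measures in which these $L^2$ estimates are taken, and the separation properties (3) and (4a) ensure each $\cL_{s,w}$ is a genuine non-overlapping transfer operator so that the classical Dolgopyat machinery (partition into dense/sparse intervals, flattening lemma) applies verbatim to each factor.

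The main obstacle — and the genuinely new point over Naud \cite{Naud} — is iterating these single-block gains along a random word and summing against $q_w$. A typical $w \in \Omega^n$ contains $\asymp q_{w^*} n$ occurrences of $w^*$ with blocks of bounded length, by a large-deviation/Borel–Cantelli estimate for the Bernoulli measure $m_{\mathbf q}$ on $\Omega^{\N}$; on the exponentially-overwhelming-measure set of such $w$ one chains the contraction to get a factor $(1-\epsilon_0)^{c n}$, while on the exceptional set one uses only the trivial bound $\|\cL_{s,w}\| \leq 1$ (each $\cL_{s,w}$ at $r=0$ is a weighted average of pullbacks, hence an $L^\infty$-contraction non-expansive by $1$), whose total $q_w$-mass is $\leq e^{-c'n}$. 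The delicate part is that the contraction at step $k$ is measured in the varying norm $\|\cdot\|_{L^2(\mu_{w_1\ldots w_k})}$, so one needs the pushforward compatibility $\cL_{s,w_{k+1}}^* \mu_{w_1\ldots w_{k+1}} \asymp \mu_{w_1\ldots w_k}$ (up to bounded-distortion constants) to telescope the $L^2$ estimates cleanly; establishing this commutation, together with verifying that the cone is preserved under \emph{every} $\cL_{s,w}$ with uniform constants independent of $w$ and of the position in the word, is where the bulk of the technical work lies. Once the on-average $L^\infty$ bound $\sum_{w} q_w\|\cL_{s,w_n}\circ\cdots\circ\cL_{s,w_1}f\|_\infty \lesssim \rho_0^n\|f\|_\infty$ is in hand, feeding it back through the Lasota–Yorke inequality and the $L^2$-to-$L^\infty$ comparison gives the stated bound $\|\cL_s^n f\|_b \lesssim \rho_0^n|b|^{1/2}\|f\|_b$; finally, since $\|\cL_s^n f\|_b^{1/n} \to \rho(\cL_s)$ and the right side forces $\limsup_n \|\cL_s^n\|_b^{1/n} \leq \rho_0 < 1$, we obtain $\rho(\cL_s) \leq 1-\delta$ for any fixed $\delta < 1-\rho_0$, uniformly for $|r|$ small and $|b|$ large.
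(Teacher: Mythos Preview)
Your proposal is correct and follows essentially the same route as the paper: disintegrate $\cL_s^n$ via Lemma~\ref{lemma:operator disintegration}, split words into good (many aligned $(w^*)^N$-blocks) and bad via a large-deviation estimate for the Bernoulli measure on $\Omega^{\N}$, run random Dolgopyat operators on the good words to obtain $L^2(\mu_{w'})$-contraction telescoped across the varying reference measures, pass from $L^2$ to $L^\infty$ using half the iterations (this is where the $|b|^{1/2}$ appears), and finally bootstrap to the $\|\cdot\|_b$-norm by a Lasota--Yorke type derivative estimate. The only point to tighten is that on the bad-word set the trivial bound is $\gamma_1^{n|r|}$ rather than $1$ when $r\neq 0$, but this is absorbed by the exponential smallness of the bad set exactly as you anticipate; the paper's implementation also uses \emph{aligned} blocks $(w^*)^N$ at positions $iN+1,\ldots,(i+1)N$ rather than arbitrary runs of $w^*$, which makes the telescoping of the $\mu_{w'}$-norms an exact identity rather than an $\asymp$.
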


In the proof of Theorem \ref{thm:Linftybound2}, we end up getting the contraction from the imaginary part $ib$ of $s = r+ib$, and the real part can in the worst case (especially if $r < 0$) cause expansion. To control this, the assumption that $|r|$ is small enough is absorbed by weakening the contraction rate using the uniform contraction of the maps $\phi_a$. Recall that there exists $1 < \gamma < \gamma_1$ such that for all $x \in I$ and $n \in \N$, if $\a\in \A^n$ then
$$\gamma_1^{-n} \lesssim |\varphi_\a'(x)| \lesssim  \gamma^{-n}.$$
Thus for any $r \in \R$ and $n \in \N,$ there exists $c_0>0$ such that we have
\begin{align}\label{eq:unifexp} \sup_{x \in I} \sup_{\a \in \A^n} |\varphi_{\a}'(x)|^r \leq c_0 \gamma_{1}^{n|r|}.\end{align}

The following proposition is the first step towards proving Theorem \ref{thm:Linftybound2}.
\begin{prop}
	\label{prop:normbound}
	There exists $N\in\mathbb{N}$ and $\rho\in(0,1)$ such that if $w_1\ldots w_{N\lfloor n/N\rfloor}$ satisfies
	$$\sharp \{0\leq i\leq \lfloor n/N\rfloor -1:w_{iN+1}\ldots w_{(i+1)N}=(w^*)^N\} \geq cn/N$$
	for some $c > 0$, then for any $s=r+ib$ with small enough $|r|$ and large enough $|b|$ we have
	$$\int_{K_{\tilde w}} |\cL_{s,w_n} \circ  \dots \circ \cL_{s,w_1} (f)|^2  d\mu_{\tilde w} \leq \rho^{cn/N} \|f\|_b^2$$for any word $\tilde{w}$.
\end{prop}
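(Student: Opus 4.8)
The plan is to reduce the $L^2$ contraction of a random composition $\cL_{s,w_n}\circ\cdots\circ\cL_{s,w_1}$ to two ingredients: a crude (non-contracting) bound for the blocks $\cL_{s,w_i}$ coming from the separated sub-IFSs $w_i$, and a genuine contraction gain each time we encounter a block $(w^*)^N$. Since the hypothesis guarantees at least $cn/N$ such good blocks, multiplying the per-block estimates then yields the factor $\rho^{cn/N}$. The real part $r$ of $s$ only enters through $|\varphi_\a'(x)|^r$, which by \eqref{eq:unifexp} is bounded by $c_0\gamma_1^{n|r|}$, so taking $|r|$ small enough makes this expansion negligible compared to the contraction; thus I will first treat $r=0$ and absorb the $r\neq 0$ case at the end.

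First I would establish the crude bound. For any separated block $w$ with $\sharp w\in\{2,3\}$, the operator $\cL_{s,w}f=\sum_{a\in w}p_{a,w}|\varphi_a'|^s f\circ\varphi_a$ is, up to the harmless factor $|\varphi_a'|^{ib}$ of modulus one (when $r=0$), a convex average of the composition operators $f\mapsto f\circ\varphi_a$. Since the sets $\varphi_a(I)$, $a\in w$, are disjoint and $\mu_w$ disintegrates over them via the weights $p_{a,w}$, Jensen's inequality (or just $|\sum|^2\le \sharp w\sum|\cdot|^2$ together with the disjoint support structure and the change-of-variables identity $\int g\circ\varphi_a\,d\mu_{w'}=\int g\,d(\mu_{w'}\circ\varphi_a^{-1})$) gives
$$\int_{K_{w'}}|\cL_{s,w}h|^2\,d\mu_{w'}\le \int_{K_{w''}}|h|^2\,d\mu_{w''}$$
where $w''$ is the remaining suffix, with no loss of constant; in the worst case with $r\neq 0$ one picks up a factor $\lesssim \gamma_1^{2N|r|}$ per block of length $N$. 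Iterating over all $n$ blocks, the blocks that are \emph{not} part of a good $(w^*)^N$ contribute at most a factor that is uniformly bounded (and, once $|r|$ is small, as close to $1$ as we like per block; we only ever apply this to at most $n - cn$ blocks, so the total is at most $e^{o(n)}$, absorbable into $\rho^{cn/N}$ after slightly shrinking $\rho$).

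Second, and this is the heart of the matter, I would prove the contraction gain on a good block: there exist $N$, $\rho_1\in(0,1)$ and a choice of $|b|$ large such that
$$\int_{K_{w'}}|\cL_{s,w^*}^N h|^2\,d\mu_{w'}\le \rho_1\int_{K_{w''}}|h|^2\,d\mu_{w''}$$
for every $h$ (with $w',w''$ the surrounding words). This is exactly where property $4b$ of Proposition~\ref{prop:IFS partition} — the uniform non-integrability of the iterates $\varphi_{\alpha_1^l},\varphi_{\alpha_2^l}$ on a neighbourhood of $K$ — is used, and it is the step where Dolgopyat's method / the Naud-type $L^2$ oscillation argument for the simple two-branch operator $\cL_{s,w^*}h=\tfrac12\sum_{a\in w^*}|\varphi_a'|^{s}h\circ\varphi_a$ enters. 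The expected mechanism: when the phases $b\log|\varphi_{\alpha_1^N}'|$ and $b\log|\varphi_{\alpha_2^N}'|$ differ by at least $c_1|b|$ in derivative over a scale on which $h$ (being in $C^{1,b}$) is roughly constant, the two terms in $\cL_{s,w^*}^N h$ exhibit cancellation on a definite proportion of the relevant cylinders, so $|\cL_{s,w^*}^N h|$ is strictly smaller than the trivial bound on a set of definite $\mu_{w''}$-measure — this is a "random Dolgopyat operator" estimate localized to the sub-IFS $\{\varphi_{\alpha_1},\varphi_{\alpha_2}\}$ with its measure $\mu_*$. I expect this to be the main obstacle: one must run the Dolgopyat construction uniformly over the (arbitrarily placed) surrounding words $w',w''$, i.e. the cancellation must be quantified with constants depending only on $c_1,c_2$, the contraction ratios, and $B$, not on where the good block sits in the product; controlling the interaction with $h$'s derivative (via the $b$-norm, which is why the final estimate is against $\|f\|_b^2$ rather than $\|f\|_\infty^2$) is the delicate bookkeeping.

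Finally, I would combine the two estimates multiplicatively along the factorization of $w_1\ldots w_{N\lfloor n/N\rfloor}$ into blocks of length $N$: each good block contributes $\rho_1$, each other block contributes a factor $\le \kappa$ with $\kappa\to 1$ as $|r|\to 0$, and there are $\ge cn/N$ good blocks out of $\lfloor n/N\rfloor$ total, giving an overall bound $\rho_1^{cn/N}\kappa^{(1-c)n/N+O(1)}\|f\|_b^2 \le \rho^{cn/N}\|f\|_b^2$ for a suitable $\rho\in(0,1)$ once $|r|$ is small enough and $|b|$ large enough. The leftover at most $N$ maps $\cL_{s,w_i}$ with $N\lfloor n/N\rfloor<i\le n$ contribute only a bounded constant, absorbed into $\lesssim$ or into a further tiny shrink of $\rho$. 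This yields the claimed inequality for every word $\tilde w$.
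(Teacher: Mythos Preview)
Your crude bound for the non-good blocks is fine: indeed $|\cL_{s,w}h|\le \cL_{0,w}|h|$ when $r=0$, then Cauchy--Schwarz plus the pushforward identity gives $\int_{K_{w'}}|\cL_{s,w}h|^2\,d\mu_{w'}\le \int_{K_{ww'}}|h|^2\,d\mu_{ww'}$, and the $r\neq 0$ loss is $\gamma_1^{2|r|}$ per step. The gap is in your second step. You propose to prove
\[
\int_{K_{w'}}|\cL_{s,w^*}^N h|^2\,d\mu_{w'}\le \rho_1\int_{K_{w''}}|h|^2\,d\mu_{w''}
\]
for every $h$ (with $b$-norm control), and then iterate multiplicatively. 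But Dolgopyat's method does \emph{not} produce an $L^2$ contraction of $\cL_{s,w^*}^N$ on complex $C^{1,b}$ functions; a function $h$ can be chosen so that its phase at $\varphi_{\alpha_1^N}(x)$ and $\varphi_{\alpha_2^N}(x)$ cancels the factors $|\varphi_{\alpha_i^N}'(x)|^{ib}$, and then the two terms align rather than cancel. There is no reason the intermediate functions $h_\ell$ arising along your iteration avoid this obstruction, so the multiplicative chain $\int|h_{\ell+1}|^2\le\rho_1\int|h_\ell|^2$ breaks down.

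What the paper does instead --- and what is genuinely needed --- is to track a \emph{positive} dominating function $H_\ell$ in the cone $C_{A|b|}=\{H>0:|H'|\le A|b|H\}$ alongside the complex iterate. One starts with $H_0=\|f\|_b\cdot\mathbf{1}$ and maintains $|h_\ell|\le H_\ell$ and $|h_\ell'|\le A|b|H_\ell$ throughout. On a good block the key lemma (Lemma~\ref{lma:dolgopyat}) produces an auxiliary positive operator $\cN_s^J$ (the random Dolgopyat operator, which \emph{depends on $h_\ell$ and $H_\ell$}) such that (i) $|\cL_{s,w^*}^N h_\ell|\le \cN_s^J(H_\ell)=:H_{\ell+1}$, (ii) the cone is preserved, and (iii) $\int|H_{\ell+1}|^2\,d\mu_{w'}\le\rho\int|H_\ell|^2\,d\mu_{(w^*)^Nw'}$. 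On a non-good block one takes $H_{\ell+1}=\cL_{r,\cdots}H_\ell$, which also preserves the cone and gives only the $\gamma_1^{2|r|N}$ loss. The $L^2$ contraction is thus applied to $H$, never to $h$, and iterating yields $\int|h_n|^2\le\int|H_n|^2\le \rho^{cn/N}\|f\|_b^2$ after absorbing the small-$|r|$ expansion. The cone machinery is not cosmetic bookkeeping: it is exactly what makes the cancellation argument iterable, since the Dolgopyat choice of $J$ at each good block needs the pointwise relation $|h_\ell'|\le A|b|H_\ell$, which $L^2$ information alone cannot supply.
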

Let $N$ be as in the statement of Proposition \ref{prop:normbound}. We say that a word $w_1\ldots w_n\in \Omega^n$ is good if 
$$\sharp \{0\leq i\leq \lfloor n/2N\rfloor -1:w_{iN+1}\ldots w_{(i+1)N}=(w^*)^N\} \geq \frac{(p_{\alpha_1}+p_{\alpha_2})^Nn}{5N}.$$ Similarly, we say that $w_1\ldots w_n\in \Omega^n$ is bad if it fails to be good. %For a typical $w_1\ldots w_n$ we should expect 
%\begin{align*}
%\sharp  \{0\leq i\leq n/2N-1:w_{iN+1}\ldots w_{(i+1)N}=(w^*)^N\}&\approx Prob(w_1\ldots w_N=(w^*)^N)\frac{n}{2N}\\
%&=\frac{(p_{\alpha_1}+p_{\alpha_2})^N n}{2N}.
%\end{align*} 
The significance of the bound $\frac{(p_{\alpha_1}+p_{\alpha_2})^Nn}{5N}$ is that it is strictly less than $(p_{\alpha_1}+p_{\alpha_2})^N\left \lfloor\frac{n}{2N} \right\rfloor,$ which is the expectation for the number of $0\leq i\leq \lfloor n/2N\rfloor -1$ satisfying $w_{iN+1}\ldots w_{(i+1)N}=(w^*)^N$. Thus we can use large deviation bounds, see for instance Hoeffding \cite{Hoe}, to conclude that there exists $\rho_{2}\in(0,1)$ such that \begin{equation}
	\label{e:bad measure}
	\sum_{\textrm{ bad }w_1\ldots w_n}q_{w}\lesssim \rho_{2}^{n/N}.
\end{equation} This observation together with the following Theorem is what allows us to prove Theorem \ref{thm:Linftybound2}.
\begin{thm}
	\label{thm:randomspectralgap}
There exists $\rho_1\in(0,1)$ such that for $s=r+ib$ with $|r|$ sufficiently small and $|b|$ sufficiently large, for all $n \in \N$ and for all good words $w_1\dots w_n$ and $f \in C^1$, we have
	$$ \|\cL_{s,w_n} \circ \cdots \circ \cL_{s,w_1} (f)\|_\infty \lesssim \rho_1^{n} |b|^{1/2} \|f\|_b.$$
\end{thm}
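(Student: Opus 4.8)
The plan is to obtain Theorem~\ref{thm:randomspectralgap} by feeding the $L^2$--decay of Proposition~\ref{prop:normbound} into a splitting-and-smoothing argument, the loss of $|b|^{1/2}$ coming from the passage from $L^2$ to $L^\infty$. Throughout, $\|\cdot\|_\infty$ is taken over a fixed bounded open neighbourhood $U\supset K$ with $\varphi_a(\overline U)\subset U$ for all $a\in\A$; this is a harmless standard reduction. Since there are no good words of length $<2N$, we may assume $n\geq 2N$, and by enlarging implicit constants we may assume $n$ is as large as we please. Put $k:=N\lfloor n/(2N)\rfloor$, so $k/N=\lfloor n/(2N)\rfloor$ and $n/4\le k\le n/2\le n-k$ for $n$ large; write $P_1:=\cL_{s,w_k}\circ\cdots\circ\cL_{s,w_1}$, $P_2:=\cL_{s,w_n}\circ\cdots\circ\cL_{s,w_{k+1}}$ and $h:=P_1f$, so that $\cL_{s,w_n}\circ\cdots\circ\cL_{s,w_1}=P_2\circ P_1$. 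Because $w_1\ldots w_n$ is good, the first $k/N$ length-$N$ blocks of $w_1\ldots w_k$ contain at least $\tfrac{(p_{\alpha_1}+p_{\alpha_2})^N n}{5N}$ copies of $(w^*)^N$, so $w_1\ldots w_k$ meets the hypothesis of Proposition~\ref{prop:normbound}; hence, for \emph{every} word $\tilde w$,
\[
\int_{K_{\tilde w}}|h|^2\,d\mu_{\tilde w}\ \le\ \rho^{\,(p_{\alpha_1}+p_{\alpha_2})^N n/(5N)}\,\|f\|_b^2 .
\]

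Next I would upgrade this to a pointwise bound on $g:=P_2h$. Expanding the composition as in Lemma~\ref{lemma:operator disintegration}, for $x\in U$ and $\tilde w:=w_{k+1}\ldots w_n$ we have $g(x)=\sum_{\b}p_{\b,\tilde w}\,|\varphi_\b'(x)|^{s}\,h(\varphi_\b(x))$, the sum over $\b=(b_1,\ldots,b_{n-k})$ with $b_i\in w_{k+i}$, and the weights $p_{\b,\tilde w}$ sum to $1$. Cauchy--Schwarz together with \eqref{eq:unifexp} gives $|g(x)|^2\lesssim \gamma_1^{2(n-k)|r|}\sum_{\b}p_{\b,\tilde w}\,|h(\varphi_\b(x))|^2$. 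Since $\mu_*$ is a probability measure supported in $I$, for each $\b$ one has $|h(\varphi_\b(x))|^2=\int|h(\varphi_\b(x))|^2\,d\mu_*(y)$, so this last sum differs from $\int|h|^2\,d\mu_{\tilde w}=\sum_\b p_{\b,\tilde w}\int|h(\varphi_\b(y))|^2\,d\mu_*(y)$ by at most
\[
\sum_\b p_{\b,\tilde w}\int\bigl||h(\varphi_\b(x))|^2-|h(\varphi_\b(y))|^2\bigr|\,d\mu_*(y)\ \le\ 2\|h\|_\infty\|h'\|_\infty\sup_{\b}\sup_{U}|\varphi_\b'|\ \lesssim\ |b|\,\gamma^{-(n-k)}\|h\|_b^2 ,
\]
using the mean value theorem, the uniform contraction $|\varphi_\b'|\lesssim\gamma^{-(n-k)}$, $\|h\|_\infty\le\|h\|_b$ and $\|h'\|_\infty\le|b|\|h\|_b$. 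A standard Lasota--Yorke estimate, obtained by differentiating the expanded form of $P_1$ once and bounding $\varphi_\a''/\varphi_\a'$ by bounded distortion, gives $\|h\|_b\lesssim\gamma_1^{k|r|}\|f\|_b$ uniformly in $k$. Combining the displays,
\[
|g(x)|^2\ \lesssim\ \gamma_1^{2n|r|}\Bigl(\rho^{\,(p_{\alpha_1}+p_{\alpha_2})^N n/(5N)}+|b|\,\gamma^{-(n-k)}\Bigr)\|f\|_b^2\qquad(x\in U).
\]

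To conclude, use $n-k\ge n/2$ and $(p_{\alpha_1}+p_{\alpha_2})^N n/(5N)\gtrsim n$: taking square roots, $\|g\|_\infty\lesssim\gamma_1^{n|r|}\bigl(\rho^{\,c_3 n}+|b|^{1/2}\gamma^{-n/4}\bigr)\|f\|_b$ for some $c_3=c_3(\Phi)>0$. Now fix $r_0>0$ small enough that $\rho_1:=\gamma_1^{r_0}\max\{\rho^{c_3},\gamma^{-1/4}\}<1$; then for all $|r|\le r_0$ and all $|b|$ large we get $\|\cL_{s,w_n}\circ\cdots\circ\cL_{s,w_1}(f)\|_\infty=\|g\|_\infty\lesssim\rho_1^{\,n}|b|^{1/2}\|f\|_b$, which is the assertion (the finitely many small $n$ having been absorbed into the implicit constant).

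The substantial input is Proposition~\ref{prop:normbound} --- the Dolgopyat-type $L^2$ estimate for random compositions of the $\cL_{s,w}$ --- which is proved separately. Granting it, the only points requiring care in the assembly above are: splitting the word at the midpoint $k=N\lfloor n/(2N)\rfloor$ so that \emph{both} halves have length comparable to $n$, the definition of a good word being exactly what guarantees the front half still carries a positive density of $(w^*)^N$-blocks needed to invoke Proposition~\ref{prop:normbound}; and keeping track of the factor $|b|^{1/2}$ lost in the $L^2\!\to\!L^\infty$ smoothing step, which is precisely the $|b|^{1/2}$ in the statement. The reduction to a bounded invariant neighbourhood $U$ is routine.
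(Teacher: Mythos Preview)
Your proof is correct and follows essentially the same route as the paper's: split the composition at $k=N\lfloor n/(2N)\rfloor$, apply Proposition~\ref{prop:normbound} to the front half $P_1$ to get $L^2$ decay against each $\mu_{\tilde w}$, and use the tail composition $P_2$ together with Cauchy--Schwarz and the mean value theorem to pass from $L^2$ to $L^\infty$, the contraction $\gamma^{-(n-k)}$ paying for the derivative $\|(|h|^2)'\|_\infty\lesssim|b|\|h\|_b^2$ and yielding the $|b|^{1/2}$ loss. The paper phrases the smoothing step slightly more compactly --- writing the Cauchy--Schwarz sum directly as $\cL_{r,w_n}\circ\cdots\circ\cL_{r,w_{k+1}}(|h|^2)(x)$ and then comparing with $\int|h|^2\,d\mu_{\tilde w}$ in one line --- but your expanded version with the explicit Lasota--Yorke bound $\|h\|_b\lesssim\gamma_1^{k|r|}\|f\|_b$ is the same argument.
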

We now show how Theorem \ref{thm:Linftybound2} follows from Theorem \ref{thm:randomspectralgap}.

\begin{proof}[Proof of Theorem \ref{thm:Linftybound2}]
	
	By Lemma \ref{lemma:operator disintegration}, \eqref{eq:unifexp}, and Theorem \ref{thm:randomspectralgap}, for $|r|$ sufficiently small and $|b|$ sufficiently large, for all $n\in \N$ and $f\in C^{1}(\R)$ we have
	\begin{align*}
		\|\cL_{s}^{n} f\|_{\infty}&\leq \sum_{\text{ good }w_1\ldots w_n}q_w\cdot \| \cL_{s,w_n}\circ \cdots \circ \cL_{s,w_1}(f)\|_{\infty}+\sum_{\text{ bad }w_1\ldots w_n}q_w\cdot \| \cL_{s,w_n}\circ \cdots \circ \cL_{s,w_1}(f)\|_{\infty}\\
		&\lesssim \sum_{\text{ good }w_1\ldots w_n}q_w\rho_{1}^{n}|b|^{1/2}\|f\|_{b}+ \gamma_{1}^{n|r|} \sum_{\text{ bad }w_1\ldots w_n}q_w  \cdot \|f\|_{b}\\
		&\lesssim \rho_{1}^{n}|b|^{1/2}\|f\|_{b} + \gamma_{1}^{n|r|} \rho_{2}^{n/N}\|f\|_{b}\\
		&\lesssim \max\{\rho_1,\gamma_{1}^{|r|} \rho_2^{1/N}\}^n|b|^{1/2}\|f\|_{b}
	\end{align*}
	Here $\rho_{2}$ is as in \eqref{e:bad measure}. For $|r|$ sufficiently small we have that $\max\{\rho_1,\gamma_{1}^{|r|} \rho_2^{1/N}\}\leq \max\{\rho_1,\rho_2^{1/2N}\}.$ Therefore, taking $\rho_{3}:=\max\{\rho_1,\rho_2^{1/2N}\}$ we have $$ \|\cL_{s}^n f\|_\infty \lesssim \rho_3^{n} |b|^{1/2} \|f\|_b$$ for all $|r|$ sufficiently small. %Taking $\tilde{\rho}_3(r)=\max\{\rho_1,\gamma_{1}^{|r|} \rho_2^{1/N}\},$ we have shown that
%	$$ \|\cL_{s}^n f\|_\infty \lesssim \tilde{\rho}_3^{n} |b|^{1/2} \|f\|_b.$$	
%	For small enough $|r|$ we know that $\gamma_{1}^{|r|} \rho_2^{1/N} < 1$ and thus $\tilde{\rho}_3(r)<1$. Moreover, there exists $\rho_{3}\in(0,1)$ such that for $|r|$ sufficiently small we have $\tilde{\rho}_3(r)\leq \rho_{3}$. Thus $$ \|\cL_{s}^n f\|_\infty \lesssim \rho_3^{n} |b|^{1/2} \|f\|_b$$ for all $|r|$ sufficiently small where $\rho_{3}$ does not depend upon our choice of $r$.
	
	To get the $\|\cdot\|_b$ bound, we also need to bound the derivative term $\|(\cL_{s}^{n} f)'\|_\infty$. For this purpose, set $m := \lfloor n/2 \rfloor.$ We have for any $x \in I$ that:
	$$|\cL_s^{n} f(x)| = |\cL_{s}^{(n-m)} (\cL_s^m f)(x)|.$$
	Moreover, by uniform expansion, uniform contraction and bounded distortions we have the bound
	 $$\| (\cL_s^m f)' \|_\infty \lesssim |b| \gamma_1^{m|r|} \|f\|_\infty + \gamma^{-m}\gamma_1^{m|r|} \|f'\|_\infty \lesssim |b| \|f\|_b(\gamma_1^{m|r|} + \gamma^{-m}\gamma_1^{m|r|}).$$
	Thus, combining the above with what we proved earlier, we obtain:
		\begin{align*}
		\|(\cL_{s}^{n} f)'\|_\infty & \lesssim |b| \gamma_1^{(n-m)|r|} \| \cL_s^m f\|_\infty + \gamma^{-(n-m)} \gamma_1^{(n-m)|r|} \| (\cL_s^m f)' \|_\infty\\
& \lesssim |b| \gamma_1^{(n-m)|r|}  \rho_3^{m} |b|^{1/2} \|f\|_b + \gamma^{-(n-m)} \gamma_1^{(n-m)|r|} |b| \|f\|_b(\gamma_1^{m|r|} + \gamma^{-m}\gamma_1^{m|r|})\\
& \lesssim \max\{\gamma_1^{|r|/2}  \rho_3^{1/2},\gamma^{-1/2} \gamma_1^{|r|},\gamma^{-1} \gamma_1^{|r|}\}^n |b|^{3/2} \|f\|_b.
	\end{align*}
In the final line we have used that $m=\lfloor n/2 \rfloor.$ For $|r|$ sufficiently small we have the bound $\max\{\gamma_1^{|r|/2}  \rho_3^{1/2},\gamma^{-1/2} \gamma_1^{|r|},\gamma^{-1} \gamma_1^{|r|}\}\leq \max\{  \rho_3^{1/4},\gamma^{-1/4} ,\gamma^{-1/2} \}.$ Therefore, taking $\rho_{4}:=\max\{  \rho_3^{1/4},\gamma^{-1/4} ,\gamma^{-1/2} \}$, we have $$\|(\cL_{s}^{n} f)'\|_\infty\lesssim \rho_{4}^n|b|^{3/2} \|f\|_b$$ for all $r$ sufficiently small. Theorem \ref{thm:Linftybound} now follows by choosing $\rho_0 = \min\{\rho_3,\rho_4\} \in (0,1)$.%
%Taking $\tilde{\rho}_{4}(r)=\max\{\gamma_1^{|r|/2}  \rho_3^{1/2},\gamma^{-1/2} \gamma_1^{|r|},\gamma^{-1} \gamma_1^{|r|}\}$ we have shown that $$\|(\cL_{s}^{n} f)'\|_\infty\lesssim \tilde{\rho}_{4}(r)^n|b|^{3/2} \|f\|_b.$$
%For $|r|$ small enough it is clear that $0 < \tilde{\rho}_{4}(r) < 1.$ Moreover, there exists $\rho_{4}\in (0,1)$ such that $\tilde{\rho}_{4}(r)\leq \rho_{4}$ for all $|r|$ sufficiently small. Thus Theorem \ref{thm:Linftybound} follows by choosing $\rho_0 = \min\{\rho_3,\rho_4\} \in (0,1)$.
\end{proof}

We will now explain why Theorem \ref{thm:randomspectralgap} follows from Proposition \ref{prop:normbound}.

\begin{proof}[Proof of Theorem \ref{thm:randomspectralgap}] For any $x\in[0,1]$, using the definition of the transfer operator we have:
	$$|\cL_{s,w_n} \circ \dots \circ \cL_{s,w_1}(f)(x)|^2 \leq \cL_{r,w_{n} \circ \dots \circ \cL_{r,w_{N\lfloor n/2N\rfloor +1}}} ( |\cL_{s,w_{N\lfloor n/2N\rfloor}} \circ \dots \cL_{s,w_1}(f)(x)|^2).$$ Therefore, applying Proposition \ref{prop:normbound}, we have the following for $|r|$ sufficiently small and $|b|$ sufficiently large:
	\begin{align*}
		&\cL_{r,w_n} \circ \dots \circ \cL_{r,w_{N\lfloor n/2N\rfloor+1}} ( |\cL_{s,w_{N\lfloor n/2N\rfloor}} \circ \dots \circ \cL_{s,w_1}(f)(x)|^2)\\
		&\lesssim \gamma_{1}^{|r|n/2} \cL_{w_n} \circ \dots \circ \cL_{N\lfloor n/2N\rfloor+1} ( |\cL_{s,w_{N\lfloor n/2N\rfloor}} \circ \dots \circ \cL_{s,w_1}(f)(x)|^2)\\
		&=\gamma_{1}^{|r|n/2} \int_{K_{w_{N\lfloor n/2N\rfloor+1}\dots w_{n}}} |\cL_{s,w_{N\lfloor n/2N\rfloor}} \circ \dots \circ \cL_{s,w_1}(f)(x)|^2 \, d\mu_{w_{N\lfloor n/2N\rfloor+1} \dots w_{n}} \\
		& \quad + O(\gamma_{1}^{|r|n/2}  \gamma^{-n/2} \|(|\cL_{s,w_{N\lfloor n/2N\rfloor }} \circ \dots \circ \cL_{s,w_1}(f)|^2)'\|_{\infty})\\
		& \leq \gamma_{1}^{|r|n/2} \rho^{cn/N}\|f\|_{b}^{2}+ O(\gamma_{1}^{|r|n/2}  \gamma^{-n/2} \|(|\cL_{s,w_{N\lfloor n/2N\rfloor}} \circ \dots \circ \cL_{s,w_1}(f)|^2)'\|_{\infty}).
	\end{align*}
	In the above we have taken $$c=\frac{(p_{\alpha_1}+p_{\alpha_2})^N}{5}$$ and used the fact that $w_1\ldots w_n$ is good.
	Furthermore, we always have the bound
	$$\|(|\cL_{s,w_{N\lfloor n/2N\rfloor}} \circ \dots \circ \cL_{s,w_1}(f)|^2)'\|_{\infty} \lesssim \gamma^{|r|n/2}|b| \|f\|_{b}^2.$$ Applying this bound in the above, we see that
	$$|\cL_{s,w_n} \circ \dots \circ \cL_{s,w_1}(f)(x)|^2\lesssim \max\{\gamma_{1}^{|r|} \rho^{c/N},\gamma_{1}^{|r|}\gamma^{-1/2}\}^n|b| \|f\|_{b}^2$$ for all $x\in I$. For $|r|$ sufficiently small we have $\max\{\gamma_{1}^{|r|} \rho^{c/N},\gamma_{1}^{|r|}\gamma^{-1/2}\}\leq \max\{ \rho^{c/2N},\gamma^{-1/4}\}$. Therefore, taking $\rho_{1}=\max\{ \rho^{c/2N},\gamma^{-1/4}\}^{1/2}$, we have $$\|\cL_{s,w_n} \circ \dots \circ \cL_{s,w_1}(f)\|_{\infty}\lesssim \rho_{1}^n|b|^{1/2} \|f\|_{b}$$ for all $|r|$ sufficiently small and $|b|$ sufficiently large. This completes our proof.
	
	%Let $\tilde{\rho}_{1}(r)=\max\{\gamma_{1}^{|r|} \rho^{c/N},\gamma_{1}^{|r|}\gamma^{-1/2}\}$. Then $\tilde{\rho}_{1}(r)\in(0,1)$ for $|r|$ sufficiently small. Moreover, there exists $\rho_{1}$ such that $\tilde{\rho}_{1}(r)\leq \rho_{1}$ for all $|r|$ sufficiently small. Thus we have $$\|\cL_{s,w_n} \circ \dots \circ \cL_{s,w_1}(f)\|_{\infty}^2\lesssim \rho_{1}^n|b| \|f\|_{b}^2$$ for all $|r|$ sufficiently small and $|b|$ sufficiently large. This completes our proof.
	
\end{proof}

The missing piece in our argument is a proof of Proposition \ref{prop:normbound}. We do this in the next section by constructing suitable Dolgopyat \cite{Dolgopyat} type random operators.

\subsection{Reduction to Dolgopyat type random operators} 

Our purpose now is to show how Proposition \ref{prop:normbound} follows from the following crucial lemma. This lemma gives a construction of certain \textit{random Dolgopyat operators} (which we define formally later in its proof).

\begin{lemma}[Construction of random Dolgopyat operators]
	\label{lma:dolgopyat}
	There exists $N \in \N,$ $A > 1$ and $\rho = \rho(w^*) \in (0,1),$ such that for all $s=r+ib$ with $|r|$ sufficiently small and $|b|$ sufficiently large, for any $w' \in \bigcup_{n=1}^\infty \Omega^n$ there exists a finite set of bounded operators $(\cN_s^J)_{J \in \cE_s}$ on $C^1(I)$ satisfying the following properties:
	\begin{itemize}
		\item[(1)] The cone 
		$$C_{A|b|} = \{f \in C^1(I) : f > 0, |f'(x)| \leq A|b|f(x)\}$$
		is stable under $\cN_s^J$ for all $J \in \cE_s$, that is, if $H \in C_{A|b|}$ and $J \in \cE_s$, then 
		$$|\cN_s^J(H)'(x)| \leq A|b| \cN_s^J(H)(x)$$ 
		for all $x \in I$.
		\item[(2)] For all $H \in C_{A|b|}$ and $J \in \cE_s$, 
		$$\int_{K_{w'}} |\cN_s^J H|^2 \, d\mu_{w'} \leq \rho \int_{K_{(w^*)^N w'}} |H|^2 \, d\mu_{(w^*)^N w'}$$
		\item[(3)] Given $H \in C_{A|b|}$ and $f \in C^1(I)$  such that $|f| \leq H$ and $|f'| \leq A|b|H$, there exists $J \in \cE_b$ such that
		$$|\cL_{s,w^*}^N f| \leq \cN_s^J (H)$$
		and
		$$|(\cL_{s,w^*}^N f)'| \leq A|b|\cN_s^J (H)$$
	\end{itemize}
\end{lemma}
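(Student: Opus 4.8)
The plan is to run Dolgopyat's construction \cite{Dolgopyat1}, in the form used by Naud \cite{Naud}, for the fixed two--branch sub--IFS $\{\varphi_{\alpha_1},\varphi_{\alpha_2}\}$ coming from $w^*$, and to observe that the resulting $L^2$ estimate can be made \emph{uniform} over the whole family $\{\mu_{w'}\}_{w'\in\Omega^*}$. Write $\mathcal U$ for the $2^N$ branches of $\cL_{s,w^*}^N$, that is, $\varphi_u=\varphi_{\alpha_{i_1}}\circ\cdots\circ\varphi_{\alpha_{i_N}}$ for $u=(i_1,\dots,i_N)\in\{1,2\}^N$; since $\varphi_{\alpha_1}(I)\cap\varphi_{\alpha_2}(I)=\emptyset$ these have pairwise disjoint images with definite gaps, and for $N$ large each $\varphi_u(I)$ lies in the $\delta$--neighbourhood of $K$ on which the UNI estimate $4b$ holds. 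The constants will be fixed in the order: first a damping size $\eta\in(0,1)$ and an angle $\theta_0>0$ depending only on the UNI constants $c_1,c_2$; then the depth $N$ and cone constant $A>1$ depending on $\eta,\theta_0,\gamma,\gamma_1,B$; and only at the very end the lower threshold for $|b|$ (allowed to depend on the now--fixed $N$). The one identity that decouples everything from $w'$ is
$$\mu_{(w^*)^N w'}=2^{-N}\sum_{u\in\mathcal U}(\varphi_u)_*\mu_{w'},\qquad\text{hence}\qquad \int_{K_{(w^*)^N w'}}|H|^2\,d\mu_{(w^*)^N w'}=\int_{K_{w'}}\cL_{0,w^*}^N(H^2)\,d\mu_{w'},$$
which follows at once from the definitions of $\mu_w$ and $q_w$ together with $p_{\alpha_1}=p_{\alpha_2}$.

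A \emph{damping datum} $J$ is, by definition, a choice on the $\delta$--neighbourhood of $K$ of one branch $\iota(y)\in\{\alpha_1^N,\alpha_2^N\}$ to be switched off (or the instruction ``none''), subject to an admissibility condition specified below; to it we attach
$$\cN_s^J H(x)=\sum_{u\in\mathcal U}2^{-N}|\varphi_u'(x)|^{r}\,\chi^J_u(x)\,H(\varphi_u(x)),$$
where $\chi^J_u\in C^1(I)$ takes values in $[1-\eta,1]$, equals $1$ off the $\delta$--neighbourhood of $K$ and on every branch $u\notin\{\alpha_1^N,\alpha_2^N\}$, equals $1-\eta$ on the set where $u=\iota(\cdot)$, and obeys $\|(\chi^J_u)'\|_\infty\lesssim\eta|b|$. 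We let $\cE_s$ be the (finite) set of all admissible $J$. Properties $(1)$ and the derivative half of $(3)$ are then soft: differentiating $\cN_s^J H$ and using bounded distortion, $|\varphi_u'|\lesssim\gamma^{-N}$ and $\chi^J_u\ge1-\eta$, the contributions of $(|\varphi_u'|^r)'$, $\varphi_u'H'(\varphi_u)$ and $(\chi^J_u)'$ are at most $C|r|\,|b|$, $\gamma^{-N}A|b|$ and $C\eta|b|$ times $\cN_s^J H$, so $|(\cN_s^J H)'|\le A|b|\,\cN_s^J H$ once $A$ is large and $|r|$ small, which is $(1)$; similarly $|\cL_{s,w^*}^N f|\le\sum_u 2^{-N}|\varphi_u'|^r H(\varphi_u)$ and $|(\cL_{s,w^*}^N f)'|\lesssim |b|\sum_u 2^{-N}|\varphi_u'|^r H(\varphi_u)+\gamma^{-N}A|b|\sum_u 2^{-N}|\varphi_u'|^rH(\varphi_u)$ (using $|(|\varphi_u'|^s)'|\lesssim|b||\varphi_u'|^r$ and $|f'|\le A|b|H$), and since $\cN_s^J H\ge(1-\eta)\sum_u 2^{-N}|\varphi_u'|^rH(\varphi_u)$ this gives the derivative half of $(3)$.

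For the $L^2$ bound $(2)$, Cauchy--Schwarz with the nonnegative weights $a_u=2^{-N}\chi^J_u(y)$ gives $|\cN_s^J H(y)|^2\le(\sum_u a_u)(\sum_u a_u H(\varphi_u(y))^2)\le S^J(y)\,\cL_{0,w^*}^N(H^2)(y)$ with $S^J(y):=\sum_u 2^{-N}\chi^J_u(y)\le1$ (for clarity we take $r=0$ here; a general small $|r|$ only inserts a factor $\gamma_1^{N|r|}$ that is absorbed into the contraction as in the proof of Theorem~\ref{thm:Linftybound2}). Where the instruction is not ``none'' one branch is damped, so $S^J(y)\le1-\eta2^{-N}$; on the exceptional set $B^J=\{\iota=\text{none}\}$ one has only $S^J(y)\le1$. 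Combining this with the identity above,
$$\int_{K_{w'}}|\cN_s^J H|^2\,d\mu_{w'}\le\int_{K_{(w^*)^N w'}}|H|^2\,d\mu_{(w^*)^N w'}-\eta2^{-N}\int_{K_{w'}\setminus B^J}\cL_{0,w^*}^N(H^2)\,d\mu_{w'},$$
so it remains to show $\int_{K_{w'}\setminus B^J}\cL_{0,w^*}^N(H^2)\,d\mu_{w'}\gtrsim\int_{K_{w'}}\cL_{0,w^*}^N(H^2)\,d\mu_{w'}$ uniformly in $w'$. Here admissibility is used: $B^J$ is required to be, inside each cylinder of $K^*$ (transported by the relevant $\varphi_v$), a union of intervals of length $\lesssim\epsilon_1/|b|$ with gaps $\gtrsim1/|b|$, $\epsilon_1=\epsilon_1(c_1,c_2,\eta)$ small. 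Tile $K_{w'}$ by its sub--cylinders $Q=\varphi_v\varphi_u(K^*)$ at the depth with $\diam Q\sim\gamma^{N/2}/|b|$: since $H\in C_{A|b|}$ is slowly varying, $\cL_{0,w^*}^N(H^2)$ is constant up to a factor $e^{O(A\gamma^{-N/2})}\approx1$ on each $Q$, while $\mu_{w'}\!\restriction_Q$ is $\mu_*$ transported by $\varphi_{vu}$, and by bounded distortion $\varphi_{vu}^{-1}(B^J\cap Q)$ is a $B^2\epsilon_1$--thin, uniformly separated subset of $[0,1]$. The remaining ingredient is a \emph{uniform reverse--doubling} property of the fixed measure $\mu_*$: for $\epsilon_1$ small, $\mu_*(B(x,\ell))\le\tfrac12\mu_*(B(x,\ell/\epsilon_1))$ for all $x\in K^*$, $\ell\in(0,1)$ — which holds because, comparing the two balls with the $\{\varphi_{\alpha_1},\varphi_{\alpha_2}\}$--cylinders containing $x$, the ratio is $\lesssim 2^{-(n-m)}$ where the number $n-m\gtrsim\log(1/\epsilon_1)/\log\gamma_1$ of gained tree levels is controlled by strong separation and bounded distortion. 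A Vitali packing of $B^J\cap Q$ by balls $B(x_i,\cdot)$ then yields $\mu_*(\varphi_{vu}^{-1}(B^J\cap Q))\le1-c'$, i.e. $\mu_{w'}(B^J\cap Q)\le(1-c')\mu_{w'}(Q)$ for every $Q$; summing over $Q$ gives $(2)$ with $\rho=1-\tfrac12\eta2^{-N}c'$, uniformly in $w'$, once $|b|$ is large enough that $B^J$ is genuinely thin at scale $\diam Q$.

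The remaining, and genuinely hard, point is the main inequality of $(3)$ — the oscillation dichotomy — which is where the UNI estimate $4b$ enters. Given $f$ with $|f|\le H$, $|f'|\le A|b|H$, write $z_u(y)=2^{-N}|\varphi_u'(y)|^{ib}f(\varphi_u(y))$, so $|\cL_{s,w^*}^N f(y)|\le\sum_u|z_u(y)|\le\sum_u 2^{-N}H(\varphi_u(y))$. Since $H$ and, where $|f|\ge\tfrac12 H$, also $|f|$ are slowly varying on $\varphi_u$ of any $1/|b|$--interval, one splits the $\delta$--neighbourhood of $K$ into intervals on which $\arg z_{\alpha_1^N}-\arg z_{\alpha_2^N}$ sweeps an arc of a fixed length. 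On an interval where some $|f(\varphi_{\alpha_i^N})|\le\tfrac12 H(\varphi_{\alpha_i^N})$ one switches off $\alpha_i^N$ and $\sum_u|z_u|\le\sum_u 2^{-N}H(\varphi_u)-\tfrac14 2^{-N}H(\varphi_{\alpha_i^N})\le\cN_s^J H$ for $\eta\le\tfrac14$. Otherwise both $|f(\varphi_{\alpha_i^N})|\ge(1-\eta)H(\varphi_{\alpha_i^N})$, and then $\tfrac{d}{dy}(\arg z_{\alpha_1^N}-\arg z_{\alpha_2^N})=b\,\tfrac{d}{dy}(\log|\varphi_{\alpha_1^N}'|-\log|\varphi_{\alpha_2^N}'|)+O(A|b|\gamma^{-N})$ has modulus in $[\tfrac12 c_1|b|,2c_2|b|]$ for $N$ large, by $4b$ with $l=N$; on the sub--interval (a definite proportion, since $\theta_0\lesssim c_1/c_2$) where this difference stays $\ge\theta_0$ from $0$ modulo $2\pi$ the vectors $z_{\alpha_1^N},z_{\alpha_2^N}$ subtend an angle $\ge\theta_0$, so $|z_{\alpha_1^N}+z_{\alpha_2^N}|\le|z_{\alpha_1^N}|+|z_{\alpha_2^N}|-c(\theta_0)\min(|z_{\alpha_1^N}|,|z_{\alpha_2^N}|)$, and switching off the $\alpha$--branch of smaller modulus (whose modulus is $\ge\tfrac12\cdot2^{-N}\min_i H(\varphi_{\alpha_i^N})$) gives $|\cL_{s,w^*}^N f|\le\cN_s^J H$ for $\eta\le\tfrac12 c(\theta_0)$; on the complementary sub--interval we issue ``none'', and the arithmetic shows this exceptional set is $\epsilon_1$--thin with $\epsilon_1\lesssim\theta_0 c_2/c_1$, so $J\in\cE_s$. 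The bulk of the work — and the step I expect to be the main obstacle — is making this last dichotomy rigorous: tracking the slow phase $\arg f(\varphi_u(\cdot))$ against the fast phase $b\log|\varphi_u'(\cdot)|$, choosing the partition so its scales match those used for $(2)$, and keeping the constants $A,\eta,N,\theta_0,\epsilon_1$ and the $|b|$--threshold mutually consistent. This is a direct adaptation of the corresponding argument of Naud \cite{Naud} and Dolgopyat \cite{Dolgopyat1}; the only new feature is that it is performed entirely with the fixed pair $(\varphi_{\alpha_1^N},\varphi_{\alpha_2^N})$ and is therefore automatically independent of $w'$.
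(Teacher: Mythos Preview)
Your overall architecture is the same as the paper's: define Dolgopyat operators $\cN_s^J$ as $\cL_{r,w^*}^N$ acting on $\chi_J H$ for suitable cutoffs, prove cone stability and the derivative half of (3) by soft distortion estimates, get (2) from Cauchy--Schwarz plus a ``damping captures a definite proportion of the $L^2$ mass'' argument, and prove (3) via the smallness/angle dichotomy driven by the UNI estimate for $(\varphi_{\alpha_1^N},\varphi_{\alpha_2^N})$. For (1), the second half of (3), and the dichotomy itself your sketch is essentially what the paper does (its Lemma~\ref{lma:Theta lemma} is exactly your angle argument, carried out on a fixed triple of adjacent intervals).

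The gap is in your proof of (2), specifically in how you obtain the inequality $\int_{K_{w'}\setminus B^J}\cL_{0,w^*}^N(H^2)\,d\mu_{w'}\gtrsim\int_{K_{w'}}\cL_{0,w^*}^N(H^2)\,d\mu_{w'}$ \emph{uniformly in $w'$}. You tile $K_{w'}$ by cylinders $Q=\varphi_v\varphi_u(K^*)$ with $\diam Q$ at a prescribed scale $\sim\gamma^{-N/2}/|b|$, and then use that $\mu_{w'}\!\restriction_Q$ is the pushforward of $\mu_*$ by $\varphi_{vu}$ to reduce to reverse--doubling of the single measure $\mu_*$. But for $\mu_{w'}\!\restriction_Q$ to be a pushforward of $\mu_*$ one must have $|v|=|w'|$ (so that the word $v$ has exhausted the $w'$--part of the tree), hence $\diam Q\lesssim\gamma^{-|w'|}$. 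When $|w'|$ is large compared to $\log|b|/\log\gamma$ --- and the lemma must hold for \emph{all} $w'$ with a $|b|$--threshold independent of $w'$ --- there are no cylinders of $K_{w'}$ at your prescribed scale, and at the available (much finer) scale the pullback $\varphi_{vu}^{-1}(B^J\cap Q)$ is no longer $B^2\epsilon_1$--thin: it blows up by a factor $\gtrsim\gamma^{|w'|}$ and can cover all of $K^*$. So the reduction to $\mu_*$ collapses precisely in the regime the lemma must cover.

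The paper handles this differently: rather than trying to make the construction $w'$--independent, it builds an interval cover $(V_j)_j$ at scale $\epsilon'/|b|$ \emph{adapted to $K_{w'}$} (Proposition~\ref{prop:Interval covers}, using the tree $w'\times(w^*)^{\N}$ and $\epsilon$--cutoff words, which is well defined regardless of how long $w'$ is), defines $\cE_s$ to be the \emph{dense} index sets $J$ relative to this cover, and proves (2) via a uniform \emph{doubling} property of the whole family $\{\mu_{w'}\}_{w'}$ (Lemma~\ref{lma:doubling}) together with a local comparison of neighbouring $V_j$'s (Lemma~\ref{W_J lemma}). The uniformity in $w'$ thus comes from the uniform doubling constant, not from any reduction to $\mu_*$. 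Your plan could likely be repaired by replacing the $\mu_*$--reduction with a direct uniform doubling/reverse--doubling statement for $\mu_{w'}$, but as written the crucial step of (2) does not go through; and your set $\cE_s$ of ``admissible damping data'' is not visibly finite without a partition of the type the paper introduces.
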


Assuming Lemma \ref{lma:dolgopyat} we now focus on proving Proposition \ref{prop:normbound}. Our proof of this proposition depends upon a careful choice of random Dolgopyat operators. Our analysis naturally falls into two cases, whether we observe the word $(w^*)^{N}$ or not. 
%At this point it is useful to recall the definition of the \textit{unperturbed} transfer operator:
%$$\cL_{w} f (x) = \cL_{0,w} f(x) = \sum_{a \in w} p_{a,w} f(\phi_a(x)), \quad x \in \R , f \in C^1(\R ).$$

\begin{proof}[Proof of Proposition \ref{prop:normbound}]
	Let $N$ be as in Lemma \ref{lma:dolgopyat} and let $w_1\ldots w_{N\lfloor n/N\rfloor}$ satisfy $$\sharp \{0\leq i\leq \lfloor n/N\rfloor -1:w_{iN+1}\ldots w_{(i+1)N}=(w^*)^N\} \geq cn/N$$
	for some $c > 0$. Let $s=r+ib$ be such that $|r|$ is sufficiently small and $|b|$ is sufficiently large so that Lemma \ref{lma:dolgopyat} applies.
	
	We inductively choose a sequence of operators $\tilde \cN_{0},\ldots \tilde \cN_{\lfloor n/N\rfloor -1}$ as follows:
	\begin{itemize}
		\item[(1)] Consider the block $w_1\ldots w_N$. If $w_1\ldots w_N\neq (w^*)^{N}$ then we let 
		$$\tilde \cN_{0}=\cL_{r,w_N}\circ \cdots \circ \cL_{r,w_1}.$$ 
		If $w_1\ldots w_N= (w^*)^{N}$ then we let 
		$$\tilde \cN_{0} :=\cN_{s}^{J},$$ 
		where $\cN_{s}^{J}$ is the random Dolgopyat operator coming from Lemma \ref{lma:dolgopyat} for the choice of the word $$w' = w_{N+1}\ldots w_{N\lfloor n/N\rfloor}\tilde{w}$$ and where $H$ is the \textit{constant} function 
		$$H:= H_0 = \|f\|_{b}\1.$$
		\item[(2)] Assume we have made choices of $\tilde \cN_{k}$ for $0\leq k\leq \ell-1$. If $w_{r,\ell N+1}\ldots w_{r,(\ell +1)N}\neq  (w^*)^{N}$ then we let 
		$$\tilde \cN_{\ell} :=\cL_{r,w_{(\ell+1)N}}\circ \cdots \circ \cL_{r,w_{\ell N+1}}.$$ 
		If $w_{\ell N+1}\ldots w_{(\ell+1)N}= (w^*)^{N}$, then we take our operator to be the random Dolgopyat operator given by Lemma \ref{lma:dolgopyat} with the choice of the word $$w':=w_{(\ell+1)N+1}\ldots w_{N\lfloor n/N\rfloor}\tilde{w}$$ and the choice of the function
		$$H:=H_{\ell}=\tilde\cN_{\ell-1} \circ \dots \circ \tilde \cN_{0} (\|f\|_b \1).$$  
	\end{itemize}
	We repeat this process until we have defined  $\tilde \cN_{0},\ldots \tilde \cN_{\lfloor n/N\rfloor -1}.$

	These operators will allow us to bound the $L^2$ norms of our random operators as in the statement of Proposition \ref{prop:normbound}. The first step towards achieving this bound is to observe the following inequality:
	\begin{equation}
		\label{eq:relateddolgopyat}|\cL_{s,w_N\lfloor n/N\rfloor} \circ  \dots \circ \cL_{s,w_1} (f)|  \leq   \tilde\cN_{\lfloor n/N\rfloor-1} \circ \dots \circ \tilde \cN_0(H).
	\end{equation}
	%\medskip
	%\textit{Fixing $H$, $\|f\|_b = 1$, $\|f\|_\infty \leq H$ and $\|f'\|_\infty \leq A|b| H$, we have:
		%\begin{equation}
		%\label{eq:relateddolgopyat}|\cL_{s,w_N\lfloor n/N\rfloor} \circ  \dots \circ \cL_{s,w_1} (f)|  \leq   \tilde\cN_{n/N-1} \circ \dots \circ \tilde \cN_0(H)
		%\end{equation}}
		We omit the proof of \eqref{eq:relateddolgopyat}. Its proof relies upon a simple inductive argument that makes use of property $3$ of Lemma \ref{lma:dolgopyat}.

		By \eqref{eq:relateddolgopyat}, it is enough to bound the integral
		$$ \int_{K_{\tilde w}}| \tilde\cN_{\lfloor n/N\rfloor-1} \circ \dots \circ \tilde \cN_0(H)|^2 \, d\mu_{\tilde w}.$$
		Let
		$$D = \{0 \leq \ell \leq \lfloor n/N\rfloor -1 : \tilde \cN_{\ell} = \cN_{s}^J\},$$
		that is, $D$ is the set of subscripts where $\tilde \cN_{\ell}$ has been chosen to be an operator coming from Lemma \ref{lma:dolgopyat}. Now the idea is that along the blocks corresponding to $\ell \in D$, we will see decay due to Lemma \ref{lma:dolgopyat}, and for the other blocks we can control the expansion using \eqref{eq:unifexp}. 
		
		Let us look at the last block of length $N$. If $\lfloor n/N\rfloor -1 \notin D$, then by the Cauchy-Schwartz inequality, \eqref{eq:unifexp} and the definition of the unperturbed transfer operator, we have the bound
		\begin{align*}
			&\int_{K_{\tilde w}}| \tilde\cN_{\lfloor n/N\rfloor-1} \circ \dots \circ \tilde \cN_0(H)|^2 \, d\mu_{\tilde w}\\
			\leq &\gamma_{1}^{2|r|N} \int_{K_{w_{N\lfloor n/N\rfloor-N+1} \dots w_{N\lfloor n/N\rfloor} \tilde w}} | \tilde\cN_{\lfloor n/N\rfloor-2} \circ \dots \circ \tilde \cN_0(H)|^2 \, d\mu_{w_{N\lfloor n/N\rfloor-N+1} \dots w_{N\lfloor n/N\rfloor} \tilde w}. 
		\end{align*}If $\lfloor n/N\rfloor -1 \in D$, we bound it instead by
		$$\rho \int_{K_{w_{N\lfloor n/N\rfloor-N+1} \dots w_{N\lfloor n/N\rfloor} \tilde w}} | \tilde\cN_{\lfloor n/N\rfloor-2} \circ \dots \circ \tilde \cN_0(H)|^2 \, d\mu_{w_{N\lfloor n/N\rfloor-N+1} \dots w_{N\lfloor n/N\rfloor} \tilde w},$$
		which is possible by property 2 from Lemma \ref{lma:dolgopyat}. We then continue this process at the next stage. We bound 
		$$\int_{K_{w_{N\lfloor n/N\rfloor-N+1} \dots w_{N\lfloor n/N\rfloor} \tilde w}} | \tilde\cN_{\lfloor n/N\rfloor-2} \circ \dots \circ \tilde \cN_0(H)|^2 \, d\mu_{w_{N\lfloor n/N\rfloor-N+1} \dots w_{N\lfloor n/N\rfloor} \tilde w}$$
		when $\lfloor n/N\rfloor -2 \notin D$ by
		$$\gamma_{1}^{2|r|N}\int_{K_{w_{N\lfloor n/N\rfloor-2N+1}\dots  w_{N\lfloor n/N\rfloor} \tilde w}} | \tilde\cN_{\lfloor n/N\rfloor-3} \circ \dots \circ \tilde \cN_0(H)|^2 \, d\mu_{w_{N\lfloor n/N\rfloor-2N+1}\dots  w_{N\lfloor n/N\rfloor}\tilde w},$$
		and if $\lfloor n/N\rfloor -2 \in D$ we use Lemma \ref{lma:dolgopyat} to bound by
		$$\rho \int_{K_{w_{N\lfloor n/N\rfloor-2N+1} \dots w_{N\lfloor n/N\rfloor} \tilde w}} | \tilde\cN_{\lfloor n/N\rfloor-3} \circ \dots \circ \tilde \cN_0(H)|^2 \, d\mu_{w_{N\lfloor n/N\rfloor-2N+1} \dots w_{N\lfloor n/N\rfloor} \tilde w}.$$
		We repeat this process until we have exhausted all of our operators $\tilde \cN_{0},\ldots, \tilde \cN_{\lfloor n/N\rfloor -1}$. Importantly, we will see a $\rho$ contraction every time $\ell \in D$, and a $\gamma_{1}^{2|r|N} $ expansion when $\ell \notin D$. At the same time, $\sharp D \geq c n/N$, so we arrive to
		$$ \int_{K_{\tilde w}}| \tilde \cN_{\lfloor n/N\rfloor -1} \circ \dots \circ \tilde \cN_{0}(H)|^2 \, d\mu_{\tilde w} \leq \gamma_{1}^{2|r|N\lfloor n/N\rfloor} \rho^{cn/N} \int_{K_{w_{1} \dots w_{\lfloor n/N\rfloor} w'}} |H|^2 \, d\mu_{w_{1} \dots w_{\lfloor n/N\rfloor}w'}$$
		and as we chose $H = H_0 = \|f\|_b \1$ and $\mu_{w_{1}\dots w_n w'}$ is a probability measure, we have
		$$ \int_{K_{\tilde w}}| \tilde \cN_{\lfloor n/N\rfloor -1} \circ \dots \circ \tilde \cN_{0}(H)|^2 \, d\mu_{\tilde w} \leq \gamma_{1}^{2|r|N\lfloor n/N\rfloor} \rho^{cn/N} \|f\|_b^2.$$
		Taking $|r|$ sufficiently small that $\gamma_{1}^{2|r|N}<\rho^{-c/2}$ we obtain $$ \int_{K_{\tilde w}}| \tilde \cN_{\lfloor n/N\rfloor -1} \circ \dots \circ \tilde \cN_{0}(H)|^2 \, d\mu_{\tilde w} \leq  \rho^{cn/2N} \|f\|_b^2.$$ This completes our proof.
	\end{proof}

	Thus we are just left with proving Lemma \ref{lma:dolgopyat} by constructing the Dolgopyat-type operators.

		\subsection{Construction of the Dolgopyat operators (proof of Lemma \ref{lma:dolgopyat})}
		
		The starting point for the construction of the Dolgopyat operators is to build a kind of ``tree structure'' using the Cantor sets $K_w$ and various parameters, which eventually will depend on the probability vector $(p_a)_{\a\in \A}$, the IFS, the partition of our IFS and the imaginary part $b$ of the complex number $s$.
		
		\begin{prop}[Tree structure]
			\label{prop:Interval covers}
			There exists constants $A_1',A_1>0$ and $A_2>0$ such that for all $\epsilon$ sufficiently small, for any $w\in \cup_{n=1}^{\infty}\Omega^n$ there exists a finite collection $(V_j)_{1\leq j\leq q}$ of closed intervals ordered from left to right such that:
			\begin{enumerate}
				\item $I\subseteq \bigcup_j V_j$, and 
				$$\mathrm{int}\,V_i\cap \mathrm{int}\,V_j=\emptyset$$ 
				for $i\neq j$. 
				\item For all $1\leq j\leq q$, we have 
				$$\epsilon A_1'\leq |V_j|\leq \epsilon A_1.$$
				\item For all $1\leq j\leq q$ such that $V_j\cap K_{w}\neq \emptyset$, either 
				$$V_{j-1}\cap K_{w}\neq\emptyset \quad \text{and} \quad V_{j+1}\cap K_{w}\neq\emptyset;$$ 
				or 
				$$V_{j-2}\cap K\neq\emptyset\quad \text{and} \quad V_{j-1}\cap K_{w}\neq\emptyset;$$ 
				or 
				$$V_{j+1}\cap K_{w}\neq\emptyset\quad \text{and} \quad V_{j+2}\cap K_{w}\neq\emptyset.$$
				\item For all $1\leq j\leq q$ such that $V_j\cap K_{w}\neq \emptyset$ we have 
				$$\dist(\partial V_j,K_{w})\geq A_2|V_j|.$$
			\end{enumerate} 
		\end{prop}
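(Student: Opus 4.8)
Throughout I work in the reduced setting of Section~\ref{sec:naud}, so that $\A=w^*\cup w_1\cup\dots\cup w_m$ with distinct letters inside a common block (one of the $w_i$, or $w^*$) having disjoint images in $I$. The plan is to exploit the fact that, although the IFS overlaps badly, this partition is engineered so that each $K_w$ still carries a clean Cantor (tree) structure with combinatorial constants \emph{uniform in $w$}, and then to run the classical Dolgopyat-style interval construction with $K_w$ in place of a separated attractor. First I would set up \emph{Step 1}, the clean structure of $K_w$: call $\a=a_1\cdots a_k\in\A^*$ \emph{compatible} with $w=w_1\cdots w_n\in\Omega^n$ if $a_i\in w_i$ for $1\le i\le\min(k,n)$ and $a_i\in\{\alpha_1,\alpha_2\}$ for $i>n$. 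Using only the blockwise disjointness above, one proves by induction on $k$ that: (a) if $\a\ne\b$ are compatible of equal length then $\varphi_\a(I)\cap\varphi_\b(I)=\emptyset$, and compatible cylinders nest for prefixes; and (b) for every compatible $\a$,
$$K_w\cap\varphi_\a(I)=\varphi_\a\bigl(K_w^{(|\a|)}\bigr),\qquad K_w^{(k)}:=\begin{cases}K_{w_{k+1}\cdots w_n},&k<n,\\[2pt] K^*,&k\ge n.\end{cases}$$
Thus inside every compatible cylinder $K_w$ looks exactly like $\varphi_\a$ applied to a set splitting into $2$ or $3$ \emph{disjoint} further pieces: the overlaps of the original IFS occur only \emph{across} distinct blocks, which $K_w$ never mixes.

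\emph{Step 2 (uniform metric regularity).} Combining Step~1 with bounded distortion and the \emph{finiteness} of $\Omega$, I would produce constants $0<\eta,\kappa<1$ depending only on the IFS and $\Omega$ (never on $w$) such that every compatible cylinder $\varphi_\a(I)$ satisfies: (i) it contains a subinterval disjoint from $K_w$ of length $\ge\eta\,|\varphi_\a(I)|$, namely $\varphi_\a$ of a gap between two sibling pieces — whose relative size is bounded below because there are finitely many blocks — (uniform porosity); and (ii) $\dist$ between the leftmost and rightmost points of $K_w\cap\varphi_\a(I)$ is $\ge\kappa\,|\varphi_\a(I)|$, since the convex hull of $K_w^{(k)}$ always has a fixed minimal length (uniform perfectness). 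I would also record that, since $\sharp w_i\ge2$ and $\sharp w^*=2$, each compatible cylinder has at least $2\cdot2=4$ compatible descendants two levels below, each of length comparable to its own, with the $K_w$-free gaps separating consecutive ones of relative length bounded below.

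\emph{Step 3 (the interval construction).} Fix $\epsilon>0$ small. Let $\mathcal Z=\mathcal Z(w,\epsilon)$ be the compatible cylinders maximal with $|\varphi_\a(I)|\le\epsilon$; these are pairwise disjoint, of length comparable to $\epsilon$, they cover $K_w$, and by Step~2(i) applied one scale up any two consecutive ones are separated by a $K_w$-free gap of length $\gtrsim\epsilon$. For each $Z\in\mathcal Z$ list its $\ge4$ compatible descendant cylinders two levels down: these meet $K_w$, have length comparable to $\epsilon$, and consecutive ones are separated by $K_w$-free gaps of length in $[c\epsilon,\epsilon]$ for a uniform $c>0$. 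Now build the $V_j$'s by: (a) enlarging each such descendant cylinder to the midpoints of the two adjacent gaps, except that when an adjacent gap is ``large'' (length $>5\epsilon$, which can happen only between two different elements of $\mathcal Z$) the enlargement on that side is truncated at $2\epsilon$; (b) subdividing every remaining portion of $\R$ — the trimmed interiors of large gaps, and whatever of $\R$ is needed outside $\bigcup\mathcal Z$ to cover $I=[0,1]$, letting the two extreme intervals protrude slightly beyond $[0,1]$ and absorb any sub-threshold leftover — into intervals of length in $[A_1'\epsilon,A_1\epsilon]$, possible once $A_1'$ is small and $A_1$ large. One then checks: property $(1)$ holds by construction; property $(2)$ holds since every $V_j$ has length in $[A_1'\epsilon,4\epsilon]$; property $(4)$ holds because the endpoints of any $K_w$-meeting $V_j$ lie at a midpoint of a $K_w$-free gap of length $\gtrsim\epsilon$, or $2\epsilon$ inside such a gap, or outside $[0,1]$ — in every case at distance $\gtrsim\epsilon\gtrsim|V_j|$ from $K_w$, so $A_2$ may be taken proportional to $c$; and property $(3)$ holds because the $V_j$'s coming from a single $Z\in\mathcal Z$ form a run of at least $4$ \emph{consecutive} $K_w$-meeting intervals (gaps inside a $Z$ are never ``large'', so no non-$K_w$ interval is inserted there), whence every $K_w$-meeting $V_j$ either has both neighbours $V_{j\pm1}$ meeting $K_w$, or is the left end of such a run (so $V_{j+1},V_{j+2}$ meet $K_w$), or is its right end (so $V_{j-2},V_{j-1}$ meet $K_w$).

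\emph{Main difficulty.} The substantive step is Step~1: recognising that the partition $\Omega$ restores, simultaneously for every $K_w$, the Cantor-tree combinatorics of a strongly separated IFS, with all gap ratios uniform because $\Omega$ is a \emph{finite} partition. Granting that, Steps~2 and~3 are the standard ``Dolgopyat tree'' of the non-overlapping theory (Naud \cite{Naud}, Stoyanov \cite{Stoyanov}, Bourgain--Dyatlov \cite{BD1}) with $K$ replaced by $K_w$; the only real care is the bookkeeping needed to keep $A_1',A_1,A_2$ independent of $w$, which Step~2 supplies.
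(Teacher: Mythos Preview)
Your proposal is correct and follows essentially the same approach as the paper's own proof. Both arguments hinge on the same key observation (your Step~1): the partition $\Omega$ restores a genuine tree structure on each $K_w$ via ``compatible'' words, with all metric constants uniform in $w$ because $\Omega$ is finite; both then pass to $\epsilon$-scale cylinders (the paper's $\Sigma_\epsilon$, your $\mathcal Z$), descend two further levels to obtain $\ge 4$ well-separated pieces of size $\sim\epsilon$, wrap these in intervals sharing endpoints so as to produce runs of length $\ge 4$ (giving property~(3)), and finally fill the remaining gaps with filler intervals of comparable length. The only cosmetic difference is that the paper leaves the existence of the intervals $V_{\a\b,w}$ with properties (a)--(f) somewhat implicit, whereas you specify the midpoint-of-gap enlargement explicitly.
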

		
		\begin{proof}
			Fix $w\in \cup_{n=1}^{\infty}\Omega^n$. We define $\pi_{w}:w\times (w^*)^{\N}\to K_{w}$ according to the rule 
			$$\pi_{w}(\a)=\lim_{n\to\infty}\phi_{a_1}\circ \cdots \circ \phi_{a_n}(0).$$ 
			Recall that our separation assumptions means that for any $w\in \Omega$, for distinct $a,b\in w$ we have $\varphi_{a}(I)\cap \varphi_{b}(I)=\emptyset.$ This fact implies that $\pi_{w}$ is a continuous bijection from $w\times (w^*)^{\N}$ to $K_{w}$. 
			
			For any $\epsilon>0$ sufficiently small, for $\a \in w\times (w^*)^{\N},$ we define the $\epsilon$-cutoff of $\a$ to be the unique prefix of $\a$ satisfying the following:
			$$\diam(\phi_{a_1\ldots a_{M}}(I))<\epsilon\qquad \textrm{ and }\qquad \diam(\phi_{a_1 \ldots a_{M-1}}(I))\geq \epsilon.$$
			We let $\Sigma_{\epsilon}$ denote the set of $\epsilon$-cutoff words. To each $\a\in \Sigma_{\epsilon}$ we associate the w-cylinder set
			$$[\a]_w:=\left\{\b\in w\times (w^*)^{\mathbb{N}}:b_1\ldots b_{|\a|}=\a\right\},$$
			and let
			$$K_{\a,w}:=\pi_{w}([\a]_{w}).$$ We have $\cup_{\a\in \Sigma_{\epsilon}}K_{\a,w}=K_{w}.$ The sets $K_{\a,w}$ will be the tools we use to construct the intervals $(V_j)$. Our first step is to derive a separation bound for these sets. 
			
			Let $\a,\a'\in \Sigma_{\epsilon}$ be distinct and $|\a\wedge \a'|=\inf\{k:a_k\neq a_k'\}$. Then \begin{align*}d(K_{\a,w}, K_{\a',w})&\geq d(\phi_{a_1\ldots a_{|\a\wedge \a'|}}(I),\phi_{a_1'\ldots a'_{|\a\wedge \a'|}}(I))\\
				&\geq \inf_{x\in I}\{|\phi_{a_1\ldots a_{|\a\wedge \a'|-1}}'(x)|\}\cdot d(\phi_{a_{|\a\wedge \a'|}}(I),\phi_{a'_{|\a\wedge \a'|}}(I)).
			\end{align*}
			$d(\phi_{a_{|\a\wedge \a'|}}(I),\phi_{a'_{|\a\wedge \a'|}}(I))$ is bounded below by a constant that only depends upon the partition of our IFS. Moreover, by a bounded distortion argument, we know that  $$\inf_{x\in I}|\phi_{a_1\ldots a_{|\a\wedge \a'|-1}}'(x)|\sim \diam(\phi_{a_1 \ldots a_{|\a\wedge \a'|-1}}(I)).$$ Since $a_1\ldots a_{|\a\wedge \a'|-1}$ is a prefix of an $\epsilon$-cutoff word, we know that $\diam(\phi_{a_1\ldots a_{|\a\wedge \a'|-1}}(I))\geq \epsilon.$ It therefore follows from the above that 
			\begin{equation}
				\label{eq:K separation}
				d(K_{\a,w}, K_{\a',w})\gtrsim\epsilon.
			\end{equation} Where the underlying constant depends only upon the partition of our IFS. 
			
			For each word $\a\in \Sigma_{\epsilon},$ by definition there exists $w_{\a,1},w_{\a,2}\in \Omega$ such that  $[\a]_w=\cup_{\b\in w_{a,1}\times w_{a,2}}[\a\b]_w.$ Moreover the following properties holds for each $\a\in \Sigma_{\epsilon}$: \begin{enumerate}
				\item $K_{\a,w}=\cup_{\b\in  w_{\a,1}\times w_{\a,2}}K_{\a\b,w}.$
				\item $\diam(K_{\a\b,w})\sim \epsilon\textrm{ for each }\b\in w_{\a,1}\times w_{\a,2}.$
				\item $d(K_{\a\b,w},K_{\a\b',w})\sim \epsilon$ for distinct $\b,\b'\in w_{\a,1}\times w_{\a,2}.$
				\item $\sharp w_{\a,1}\times w_{\a,2}\geq 4.$
			\end{enumerate}
			Crucially the underlying constants in the above items only depend upon the partition of our IFS. Item $1$ holds by definition. Item $2$ follows from the fact $\a$ is an $\epsilon$-cutoff word and $\diam(K_{\a\b,w})\sim \diam(K_{\a,w})$. The implicit lower bound in item $3$ follows from the same reasoning as that given above to show $d(K_{\a,w}, K_{\a',w})\gtrsim\epsilon$. The implicit upper bound follows since $K_{\a\b,w},K_{\a\b',w}\subset K_{\a,w}$ and $\diam(K_{\a,w})\sim \epsilon$ for any $\epsilon$-cutoff word. The final bound follows because each element of $\Omega$ is a set containing either two or three elements by Proposition \ref{prop:IFS partition}. 
			
			We now use the sets $\{K_{\a\b,w}\}_{\a\in \Sigma_{\epsilon},\b\in w_{\a,1}\times w_{\a,2}}$ to construct the intervals $(V_j)$. It follows from \eqref{eq:K separation}, item $1$, and item $3$ that 
			\begin{equation}
				\label{eq:deeper separation}
				d(K_{\a\b,w},K_{\a'\b',w})\succeq \epsilon
			\end{equation} when $\a,\a'\in \Sigma_{\epsilon}$ are distinct or $\b, \b'\in w_{\a,1}\times w_{\a,2}$ are distinct. Now using \eqref{eq:deeper separation} and item $2$, we can associate to each $K_{\a\b,w}$ a closed interval $V_{\a\b,w}$ so that the following properties are satisfied
			\begin{itemize}
				\item[a.] $K_{\a\b,w}\subset V_{\a\b,w}$ for each $\a\in \Sigma_{\epsilon}$ and $\b\in w_{\a,1}\times w_{\a,2}.$
				\item[b.] $\mathrm{int}\,V_{\a\b,w}\cap \mathrm{int}\,V_{\a'\b',w}=\emptyset$ when $\a,\a'\in \Sigma_{\epsilon}$ are distinct or $\b, \b'\in w_{\a,1}\times w_{\a,2}$ are distinct.
				\item[c.] $d(K_{w},\partial V_{\a\b,w})\gtrsim \epsilon$ for each $\a\in \Sigma_{\epsilon}$ and $\b\in w_{\a,1}\times w_{\a,2}.$
				\item[d.] $\diam(V_{\a\b,w})\sim \epsilon$ for each $\a\in \Sigma_{\epsilon}$ and $\b\in w_{\a,1}\times w_{\a,2}$. 
				\item[e.] If $\a,\a'\in \Sigma_{\epsilon}$ are distinct then $d(V_{\a\b,w},V_{\a'\b',w})\gtrsim\epsilon$ for all $\b\in w_{\a,1}\times w_{\a,2}$ and $\b'\in w_{\a',1}\times w_{\a',2}$.
				\item[f.] For a fixed $\a\in \Sigma_{\epsilon}$, successive $V_{\a\b,w}$ share a common endpoint. 
			\end{itemize}
			We emphasise that each of the implicit constants in the above only depend upon the partition of our IFS. The intervals $\{V_{\a\b,w}\}$ satisfy properties $2,3$ and $4$ of our proposition. Properties $2$ and $4$ follow from items c and d. Property $3$ follows from items $4$ and f.  It remains to address property $1$. By item a, for each $\a\in \Sigma_{\epsilon}$ we have the inclusion $Conv(K_{\a,w})\subset\cup_{\b\in w_{\a,1}\times w_{\a,2}} V_{\a\b,w}.$ Moreover $\{V_{\a\b,w}\}$ satisfies the second part of property $1$ by item b. It suffices therefore to introduce additional closed intervals to fill the gaps between the sets $\cup_{\b\in w_{\a,1}\times w_{\a,2}} V_{\a\b,w}$ so that the first part of property $1$ is satisfied, and so that the second part of this property and property $2$ still hold.  By item e we have $d(\cup V_{\a\b},\cup V_{\a'\b'})\gtrsim \epsilon$ for distinct $\a,\a'\in \Sigma_{\epsilon}$. Therefore we can introduce finitely many closed intervals $\{V_l\}$ each satisfying $\diam(V_l)\sim \epsilon,$ whose union fills the gaps between the sets $\cup_{\b\in w_{\a,1}\times w_{\a,2}} V_{\a\b,w}.$ Moreover, we can insist that successive elements of $\{V_l\}$ only intersect at their endpoints if at all. Taking $\{V_j\}=\{V_l\}\cup\{V_{\a\b,w}\}$ we see that this collection satisfies property $1$ and properties $2,3$, and $4$ still hold. 
		\end{proof}

		Let $w'\in \cup_{n=1}^{\infty}\Omega^n$ be fixed. We now also fix a collection of intervals $(V_j)$ so that Proposition \ref{prop:Interval covers} is satisfied for $\epsilon=\frac{\epsilon'}{|b|}$ with $\epsilon'$ and $\frac{1}{|b|}$ both sufficiently small. For $i\in\{1,2\}$ and $1\leq j\leq q$ we let $Z^{i}_{j}=\varphi_{\alpha_{i}^{N}}(V_j)$.
		
		Properties $2$ and $4$ in Proposition \ref{prop:Interval covers} imply $\dist(K_{w}\cap V_j,\partial V_j)\geq A_2A_1'\frac{\epsilon'}{b}$ whenever $K_{w}\cap V_j\neq \emptyset$. Hence, for all $j$ such that $K_{w}\cap V_j\neq \emptyset,$ there exists a $C^1$ cut off function $\chi_j$ on $I$ such that $0\leq \chi_j\leq 1$, $\chi_j\equiv 1$ on the convex hull of $K_{w}\cap V_j$, and $\chi_j\equiv 0$ outside of $V_j$. Moreover, we can assume 
		\begin{equation}
			\label{cut off derivative}
			\|\chi_{j}'\|_{\infty}\leq A_{3}\frac{|b|}{\epsilon'}
		\end{equation}
		for some constant $A_3$ depending upon the preceding constants. Given $s = r+ib$, the set $\mathcal{J}_{s}$ is defined by $$\mathcal{J}_{s}=\left\{(i,j):i=1,2\textrm{ and }1\leq j\leq q \textrm{ with }V_{j}\cap K_{w}\neq \emptyset\right\}.$$
		Note that by construction $\mathcal{J}_s$ actually only depends on $b$, but the random Dolgopyat operators $\cN_s^J$ are defined using the real part $r$ as well:
		
		\begin{definition}[Random Dolgopyat operators $\cN_s^J$] Let $s = r+ib$ for $|b|$ sufficiently large. Fix $\theta\in(0,1)$ which we will eventually pick to be sufficiently small. Given non-empty $J\subset\mathcal{J}_{s}$, define a function $\chi_{J}\in C^{1}(I)$ by 
			$$
			\chi_{J}:=
			\begin{cases}
				1-\theta\chi_j((\varphi_{\alpha_{i}^{N}}^{-1}x))\, &\textrm{ if }x\in Z_{j}^{i} \textrm{ for }(i,j)\in J\\
				1&\textrm{ otherwise}.
			\end{cases}
			$$
			The \textit{random Dolgopyat operator} $\NN_{s}^{J}$ is defined on $C^{1}(I)$ by 
			$$\NN_{s}^{J}(f):=\cL_{r,w^*}^{N}(\chi_{J}f).$$ 
		\end{definition}
		
		We now set out to prove that these operators satisfy the properties given in Lemma \ref{lma:dolgopyat}. We will begin with property 1 of this lemma.
		
		\begin{proof}[Proof of property 1 of Lemma \ref{lma:dolgopyat}]We start by showing that for suitable constants $A, N$ and $\theta$ the cone $C_{A|b|}$ is stable under $\NN_{s}^{J}.$ 
			
			Given $H\in C_{A|b|}$, assuming $|r| < 1$, for all $x\in I$ we have
			\begin{align*}
				|\NN_{s}^{J}(H)'(x)|&=|\cL_{r,w^*}^{N}(\chi_J H)'(x)|\\
				%	&\leq \sum_{\a=a_1\ldots a_N\in (w^*)^{N}}\left|\left(\sum_{l=1}^{N}\tau_{r} (\varphi_{a_{l}\ldots a_{N}})\right)'(x)\right|e^{\sum_{l=1}^{N}\tau_{r} (\varphi_{a_{l}\ldots a_{N}}(x))}(\chi_{J}\cdot H)(\varphi_{\a})(x)\\
				%	&\qquad + e^{\sum_{l=1}^{N}\tau_{r} (\varphi_{a_{l}\ldots a_{N}}(x))}|((\chi\cdot H)\circ \varphi_{\a})'(x)|\\
				& \leq \sum_{\a\in (w^*)^N} \frac{1}{2^N} \Big| \frac{\phi_{\a}''(x)}{\phi_{\a}'(x)}\Big|  |\phi_{\a}'(x)|^{r}\chi_{J}(\varphi_{\a}(x))H(\varphi_{\a}(x)) \\	
				&\qquad+ \frac{1}{2^N}|\phi_{\a}'(x)|^{r}\Big(|(\chi_{J}\circ \varphi_{\a})'(x)H(\varphi_{\a}(x))|
				+|\chi_{J}(\varphi_{\a}(x))((H\circ \varphi_{\a})'(x))|\Big).
			\end{align*}
			By a bounded distortion argument, there exists $C_0 > 0$ such that for all $N\in\mathbb{N},$ $x \in I$ and $\a\in (w^*)^N,$ we have$$\Big| \frac{\phi_{\a}''(x)}{\phi_{\a}'(x)}\Big|\leq C_0.$$
			%	$$ \frac{|\varphi_{\a}''(x)|}{|{\a}'(x)|} \leq C_0.$$
			If $(\chi_{J}\circ \varphi_{\a})'(x)\neq 0$, then there exists $(i,j)\in J$ such that $\a=\alpha_{i}^{N}$ and 
			$$\chi_{J}\circ \varphi_{\a}=1-\theta(\chi_{j}\circ \varphi_{\alpha_{i}^{N}}^{-1}\circ \varphi_{\alpha_{i}^{N}}).$$ 
			Differentiating this latter expression and using \eqref{cut off derivative} we obtain 
			$$|(\chi_{J}\circ \varphi_{\a})'| \leq \theta A_{3}\frac{|b|}{\epsilon'}.$$ 
			Moreover, 
			$$|(H\circ \varphi_{\a})'(x)|=|H'(\varphi_{\a}(x))\varphi_{\a}'(x)|\leq A|b|\gamma^{-N}H(\varphi_{\a}(x)),$$ 
			where in the last inequality we used that $H\in C_{A|b|}.$ 
			
			Combining the inequalities above and using that $H>0$, we have
			$$|\NN_{s}^{J}(H)'(x)|\leq C_0\NN_{s}^{J}(H)(x)+A_{3}\theta \frac{|b|}{\epsilon'}\cL_{r,w^*}^{N}(H)(x)+ A|b|\gamma^{-N}\NN_{s}^{J}(H)(x).$$ Note that $H=\chi_{J}H/\chi_{J}\leq \frac{1}{1-\theta}\chi_{J}H.$ Using this inequality, we see that for all $A>2C_0+4A_{3}$, $\theta<\min\{\epsilon',1/2\}$, and $N$ sufficiently large so that $\gamma^{-N}<1/2,$ we have 
			$$|\NN_{s}^{J}(H)'(x)|\leq \left(C_0+\frac{A_3\theta}{(1-\theta)\epsilon'}+A\gamma^{-N}\right)|b|\NN_{s}^{J}(H)(x)\leq A|b|\NN_{s}^{J}(H)(x).$$  
			So the cone $C_{A|b|}$ is stable under $\NN_{s}^{J}$. We have therefore established property $1$ from Lemma \ref{lma:dolgopyat}.
		\end{proof}
		
		We now draw our attention to property $2$ from Lemma \ref{lma:dolgopyat}. 
		
		\begin{definition}[Dense subset]
			We say that $J\subset\mathcal{J}_{s}$ is \textit{dense} if for all $1\leq j\leq q$ such that $V_{j}\cap K_{\omega}\neq\emptyset$, there exists $1\leq j'\leq q$ with $(i,j')\in J$  for some $i\in\{1,2\}$ such that $|j'-j|\leq 2$. 
		\end{definition}
		
		Let $J$ be a dense subset, we denote by $W_{J}$ the subset of $K_w$ defined by $$W_{J}=\{x\in K_w:\exists (i,j)\in J:x\in V_j\}.$$
		
		The following uniform doubling property of the random measures $\mu_w$ will prove to be useful.
		
		\begin{lemma}
			\label{lma:doubling}
			There exists $C>0$ such that for any $w\in \cup_{n=1}^{\infty}\Omega^n$ we have $$\mu_{w}(B(x,2R))\leq C\mu_{w}(B(x,R))$$ for all $x\in K_{\omega}$ and $R>0$. 
		\end{lemma}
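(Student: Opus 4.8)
The plan is to code $\mu_w$ symbolically and compare $\mu_w(B(x,2R))$ with $\mu_w(B(x,R))$ through a single, carefully chosen cylinder. Since $x\in K_w\subset I=[0,1]$, the inequality is trivial when $R\ge 1$ (both balls carry the full mass), and the construction below also shows that $\mu_w(B(x,R))$ is bounded below by a positive constant depending only on the IFS and its partition once $R$ is bounded away from $0$; so it suffices to treat small $R$. Extending the map $\pi_w$ from the proof of Proposition \ref{prop:Interval covers} to words $w=w_1\ldots w_n\in\Omega^n$ in the obvious way, it remains a continuous bijection by the separation in Proposition \ref{prop:IFS partition}, and the defining relation for $\mu_w$ gives $\mu_w=(\pi_w)_*\nu$, where $\nu$ is the product of the Bernoulli measures $(p_{a,w_i})_{a\in w_i}$ over $1\le i\le n$ with $(\tfrac12,\tfrac12)$-Bernoulli measures over the remaining coordinates. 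Given small $R>0$, let $\cS_R$ be the antichain of admissible words $\a$ with $\diam\varphi_{\a}(I)<R\le\diam\varphi_{\a^-}(I)$ (where $\a^-$ is $\a$ with its last letter deleted). Then $K_w=\bigsqcup_{\a\in\cS_R}K_{\a}$, with $K_{\a}:=\pi_w([\a])$ of diameter comparable to $R$, and by bounded distortion for compositions of $C^2$ branches together with uniform contraction the estimates $\diam\varphi_{\a}(I)\sim\sup_I|\varphi_{\a}'|\sim\inf_I|\varphi_{\a}'|$ and $\gamma_1^{-|\a|}\lesssim\diam\varphi_{\a}(I)\lesssim\gamma^{-|\a|}$ hold with constants independent of $w$.

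The heart of the matter is the following. Let $\cF$ be the set of $\a\in\cS_{2R}$ with $K_{\a}\cap B(x,2R)\neq\emptyset$, and let $\a_0$ be the longest common prefix of the words in $\cF$. I claim $\diam\varphi_{\a_0}(I)\asymp R$. The lower bound is immediate, since $\a_0$ is a prefix of some $\a^-$ with $\a\in\cS_{2R}$, so $\diam\varphi_{\a_0}(I)\ge\diam\varphi_{\a^-}(I)\ge 2R$. For the upper bound: if $\cF$ has a single element it equals $\a_0$ and $\diam\varphi_{\a_0}(I)<2R$; otherwise pick $\a',\a''\in\cF$ diverging right after $\a_0$, say $\a'=\a_0 a\cdots$ and $\a''=\a_0 b\cdots$ with $a\neq b$ in the same block of the partition. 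By Proposition \ref{prop:IFS partition}, $\varphi_a(I)\cap\varphi_b(I)=\emptyset$, and since the (reduced) alphabet and partition are finite, these finitely many pairs are separated by a uniform gap $\eta_*>0$; hence by the mean value theorem and bounded distortion,
$$4R\ \ge\ d(K_{\a'},K_{\a''})\ \ge\ d\big(\varphi_{\a_0}(\varphi_a(I)),\varphi_{\a_0}(\varphi_b(I))\big)\ \ge\ \big(\inf_I|\varphi_{\a_0}'|\big)\,\eta_*\ \gtrsim\ \eta_*\,\diam\varphi_{\a_0}(I),$$
which gives $\diam\varphi_{\a_0}(I)\lesssim R$. (For $R$ below a uniform threshold this also forces $\a_0$ to be a nonempty word, so $\mu_w(K_{\a_0})<1$; for larger $R$ the lemma is trivial as noted above.)

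To conclude: the cylinders $K_{\a}$, $\a\in\cF$, are pairwise disjoint and each satisfies $\a_0\preceq\a$, hence $K_{\a}\subseteq K_{\a_0}$, so
$$\mu_w(B(x,2R))\ \le\ \sum_{\a\in\cF}\mu_w(K_{\a})\ \le\ \mu_w(K_{\a_0}).$$
Now fix a $\pi_w$-coding $\omega$ of $x$ and let $\a_x\in\cS_R$ be its prefix there; then $K_{\a_x}\subseteq\varphi_{\a_x}(I)\subseteq B(x,R)$, because $x\in\varphi_{\a_x}(I)$ and $\diam\varphi_{\a_x}(I)<R$. The $\cS_{2R}$-prefix of $\omega$ lies in $\cF$, so $\a_0\preceq\a_x$; and since $\diam\varphi_{\a_0}(I)\asymp R\asymp\diam\varphi_{\a_x}(I)$, uniform contraction forces $|\a_x|-|\a_0|\le\ell$ for some $\ell$ depending only on the IFS and its partition. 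As all the weights $p_{a,w_i}$ and the tail weights $\tfrac12$ are $\ge p_{\min}:=\min_a p_a$, we get $\mu_w(K_{\a_x})\ge p_{\min}^{\ell}\,\mu_w(K_{\a_0})$, hence
$$\mu_w(B(x,2R))\ \le\ \mu_w(K_{\a_0})\ \le\ p_{\min}^{-\ell}\,\mu_w(K_{\a_x})\ \le\ p_{\min}^{-\ell}\,\mu_w(B(x,R)),$$
which is the claim with $C:=p_{\min}^{-\ell}$ independent of $w$, $x$ and $R$.

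The main obstacle, and the only point where uniformity over $w$ is genuinely at stake, is the estimate $\diam\varphi_{\a_0}(I)\lesssim R$. It works because the separation provided by Proposition \ref{prop:IFS partition} is between images of a \emph{fixed finite} family of branches, so it automatically upgrades to a uniform positive gap $\eta_*$, and because bounded distortion for compositions of $C^2$ maps is uniform over all words. The remaining care needed is that cylinders in $\cS_R$ may sit at different symbolic depths when branches have different contraction ratios, so every comparison above should be made through the diameters $\diam\varphi_{\a}(I)$ rather than through word lengths.
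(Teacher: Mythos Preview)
Your proof is correct and follows essentially the same strategy as the paper: compare $B(x,2R)$ and $B(x,R)$ to two nested cylinders whose symbolic depths differ by a bounded amount, using the uniform separation from Proposition~\ref{prop:IFS partition} and bounded distortion, then conclude via the uniform lower bound $p_{\min}$ on the cylinder weights. The only organizational difference is that the paper starts from the shortest cylinder $K_{a_1\ldots a_n,w}\subset B(x,R)$ containing $x$ and asserts directly that $B(x,2R)\cap K_w\subset K_{a_1\ldots a_{n-l},w}$ for a uniform $l$; your argument via the common prefix $\a_0$ of the $\cS_{2R}$-cylinders meeting $B(x,2R)$, together with the estimate $\diam\varphi_{\a_0}(I)\asymp R$, is precisely a proof of this assertion, so your write-up is in fact more explicit than the paper's on the key step.
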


		\begin{proof}
			Let $x\in K_{w}$ and $R>0$. Recalling the notation used in the proof of Proposition \ref{prop:Interval covers}, we let $a_1\ldots a_n$ be the unique shortest word such that $K_{a_1\ldots a_n,w}\subset B(x,R)$ and $x\in K_{a_1\ldots a_n,w}$. Then $\mu_{\omega}(B(x,R))\geq \mu_{\omega}(K_{a_1\ldots a_n,w})$. By Proposition \ref{prop:IFS partition} we know that for any $w\in \Omega$, for any distinct $a,b\in w$ we $\varphi_{a}(I)\cap \varphi_{b}(I)=\emptyset$. It follows from this separation property that there exists $l\in \mathbb{N}$ depending only upon the partition of our IFS, such that $B(x,2R)\cap K_{\omega}\subset K_{a_1\ldots a_{n-l},w}$. Therefore $\mu_{w}(B(x,2R))\leq \mu_{w}(K_{a_1\ldots a_{n-l}}).$ Combining this bound with our previous inequality yields $$\frac{\mu_{w}(B(x,2R))}{\mu(B(x,R))}\leq \frac{\mu_{w}(K_{a_1\ldots a_{n-l},w})}{\mu_{w}(K_{a_1\ldots a_n,w})}.$$ Crucially this latter term can be bounded above by a constant that only depends upon the partition of our IFS and the underlying probability vector $\p$. This completes our proof. 
		\end{proof}
	%	Crucially the $C$ appearing in Lemma \ref{lma:doubling} does not depend upon $w$.

		\begin{lemma}
			\label{W_J lemma}
			Let $J$ be a dense subset and $H\in C_{A|b|}$. Then there exists a constant $\tilde{\epsilon}>0$ depending upon $\epsilon',$ the doubling constant from Lemma \ref{lma:doubling}, and the partition of our IFS such that $$\int_{W_J} H\, d\mu_{w}\geq \tilde{\epsilon}\int_{K_{w}}H\, d\mu_{w}.$$
		\end{lemma}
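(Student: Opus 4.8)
The plan is to partition $K_w$ into the pieces $K_w\cap V_j$ and compare each piece against a nearby piece that lies inside $W_J$. First I would note that, by property~$4$ of Proposition~\ref{prop:Interval covers}, $K_w$ is disjoint from every $\partial V_j$, so property~$1$ makes $K_w=\bigsqcup_{j:\,V_j\cap K_w\neq\emptyset}(K_w\cap V_j)$ a genuine disjoint union and hence $\int_{K_w}H\,d\mu_w=\sum_{j:\,V_j\cap K_w\neq\emptyset}\int_{K_w\cap V_j}H\,d\mu_w$. Using that $J$ is dense, for each $j$ with $V_j\cap K_w\neq\emptyset$ I would fix an index $\sigma(j)=j'$ with $(i,j')\in J$ for some $i$ and $|j-j'|\leq2$; then $K_w\cap V_{j'}\subseteq W_J$, and the map $\sigma$ is at most $5$-to-one. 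The lemma then reduces to the single comparison $\int_{K_w\cap V_j}H\,d\mu_w\leq C'\int_{K_w\cap V_{\sigma(j)}}H\,d\mu_w$ with $C'$ independent of $w$, $b$ and $j$; summing over $j$, using that $\sigma$ is at most $5$-to-one together with the fact that the sets $K_w\cap V_{j'}$ for $j'$ in the image of $\sigma$ are pairwise disjoint subsets of $W_J$, gives $\int_{K_w}H\,d\mu_w\leq5C'\int_{W_J}H\,d\mu_w$, so one takes $\tilde\epsilon=(5C')^{-1}$.

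For the comparison I would use two estimates that are uniform in $|b|$ at scale $\epsilon=\epsilon'/|b|$. The first is oscillation control of $H$ coming from the cone $C_{A|b|}$: since $V_j$ and $V_{j'}$ are at most two steps apart and each has length $\leq\epsilon A_1$ by property~$2$, their convex hull $U$ is an interval of length $\leq 3A_1\epsilon'/|b|$, and for $x,y\in U$ the bound $|H'|\leq A|b|H$ gives $|\log H(x)-\log H(y)|\leq A|b|\,|U|\leq 3AA_1\epsilon'$; hence $\sup_{V_j}H\leq D_1\inf_{V_{j'}}H$ with $D_1:=e^{3AA_1\epsilon'}$ independent of $|b|$. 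The second is a mass comparison from the doubling Lemma~\ref{lma:doubling}: picking $z\in K_w\cap V_{j'}$, property~$4$ gives $\dist(z,\partial V_{j'})\geq A_2|V_{j'}|$, so $B(z,A_2|V_{j'}|)\subseteq V_{j'}$ and $\mu_w(K_w\cap V_{j'})\geq\mu_w(B(z,A_2|V_{j'}|))$, while $K_w\cap V_j$ is contained in a ball about $z$ of radius $\leq 3A_1\epsilon'/|b|$; since property~$2$ also gives $|V_{j'}|\geq A_1'\epsilon'/|b|$, the two radii differ by a factor at most $3A_1/(A_2A_1')$, so a bounded number $M=M(A_1,A_1',A_2)$ of applications of Lemma~\ref{lma:doubling} yields $\mu_w(K_w\cap V_j)\leq C^M\mu_w(K_w\cap V_{j'})$, where $C$ is the doubling constant.

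Combining the two estimates, $\int_{K_w\cap V_j}H\,d\mu_w\leq(\sup_{V_j}H)\,\mu_w(K_w\cap V_j)\leq D_1C^M(\inf_{V_{j'}}H)\,\mu_w(K_w\cap V_{j'})\leq D_1C^M\int_{K_w\cap V_{j'}}H\,d\mu_w$, which is the required comparison with $C'=D_1C^M$. Tracing the dependencies, $A$ (hence $D_1$) depends only on the partition of the IFS, $A_1,A_1',A_2$ and $M$ depend only on the partition, and $C$ is the doubling constant; so $\tilde\epsilon=(5D_1C^M)^{-1}$ depends only on $\epsilon'$, the doubling constant, and the partition, as claimed.

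The only genuinely substantive point is the \emph{interplay of the two scale-$1/|b|$ estimates}: the cone condition is exactly what forces $H$ to vary by an $O(1)$ rather than $O(|b|)$ factor across an interval of length $O(1/|b|)$, and the uniform doubling of Lemma~\ref{lma:doubling} is exactly what makes the $\mu_w$-masses of two such adjacent intervals comparable uniformly in $w$ and $b$. Once both are in place, the remainder is the bookkeeping described above — the disjointness from property~$4$ and the bounded-multiplicity counting for $\sigma$ — which is routine.
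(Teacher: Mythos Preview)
Your proof is correct and follows essentially the same approach as the paper: both partition $K_w$ via the $V_j$'s, use the density of $J$ to define an at-most-$5$-to-one map to indices lying in $W_J$, and then combine the cone-induced oscillation bound $H(x)/H(y)\leq e^{A|b||x-y|}$ over intervals of length $\sim \epsilon'/|b|$ with the uniform doubling of $\mu_w$ to compare each piece with its image. The only cosmetic difference is that the paper routes the mass comparison through auxiliary balls $B(u_k,R)$ and $B(v_k,R')$, whereas you work directly with the intervals $V_j,V_{j'}$; the constants differ accordingly but the argument is the same.
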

		
		\begin{proof}
			
			Let $\mathcal{G}$ denote the indices in $\{1,\ldots,q\}$ such that $V_k\cap K_{w}\neq\emptyset$. Given $k\in\mathcal{G}$, by the density of $J$ there exists an index $j(k)$ with $(i,j(k))\in J$ for some $i\in\{1,2\}$ such that $|j(k)-k|\leq 2$. By choosing such a $j(k)$ for all $k\in\mathcal{G}$ we get an map $j:\mathcal{G}\to \{1,\ldots,q\}$. Notice that for all $j'\in \{1,\ldots,q\}$ the set $j^{-1}(j')$ contains at most $5$ elements.

			For all $k\in\mathcal{G}$, we choose an arbitrary $u_k\in K_{w}\cap V_k$. We have $V_{j(k)}\subset B(u_k,R)$ and $V_k\subset B(u_k,R)$ for $$R=3A_1\frac{\epsilon'}{|b|}.$$ Here we have used property $2$ from Proposition \ref{prop:Interval covers}. By property $4$ of Proposition \ref{prop:Interval covers}, we also have $B(v_k,R')\subset V_{j(k)}$ where 
			$$R'=\frac{1}{2}A_2A_1'\frac{\epsilon'}{|b|}$$ 
			and $v_k\in K_{w}\cap V_{j(k)}$ is such that 
			$$\dist(v_k,\partial V_{j(k)})=\dist(K_{w}\cap V_{j(k)},\partial V_{j(k)}).$$
			
			Let $H\in C_{A|b|}$. We have 
			$$\int_{K_{w}}H\, d\mu_{w}=\sum_{k\in\mathcal{G}}\int_{V_k}H\, d\mu_{w}\leq \sum_{k\in \mathcal{G}}\int_{B(u_k,R)}H\, d\mu_{w}\leq \sum_{k\in \mathcal{G}}(\max_{B(u_k,R)}H)\mu_{w}(B(u_k,R)).$$
			For our choice of $R$ and $R'$ we have 
			$$B(v_k,R')\subset B(u_k,R)\subset B(v_k,2R).$$ 
			Therefore, using Lemma \ref{lma:doubling}, it follows that there exists $C'>0$ depending only upon $A_1', A_1$ and $A_2$ such that $$\mu_{w}(B(u_k,R))\leq \mu_{w}(B(v_k,2R))\leq C'\mu_{w}(B(v_k,R'))\leq C'\mu_{w}(V_{j(k)}).$$ Now using the fact $$e^{-A|b||x-y|}\leq\frac{H(x)}{H(y)}\leq e^{A|b||x-y|}\footnote{This is a general property of functions in $C_{A|b|}$.}$$ for all $x,y\in I$, for $|b|$ large enough, we deduce 
			\begin{align*}
				\int_{K_w}H\, d\nu&\leq C'\sum_{k\in\mathcal{G}}e^{2A|b|R}(\min_{V_{j(k)}}H)\mu_{w}(V_{j(k)})\\
				&\leq C'e^{6AA_1\epsilon'}\sum_{k\in\mathcal{G}}\int_{V_{j(k)}}H\, d\mu_{w}\\
				&\leq 5C'e^{6AA_1\epsilon'}\sum_{j:\exists i\, s.t. (i,j)\in J}\int_{V_j}H\, d\mu_{w}\\
				&=5C'e^{6AA_1\epsilon'}\int_{W_J}H\, d\mu_{w}.
			\end{align*}
			Taking $\tilde{\epsilon}=5C'e^{6AA_1\epsilon'}$ completes our proof.
			
			%We emphasise that $5C'e^{6AA_1\epsilon'}$ depends only upon $\epsilon'$, the doubling constant from Lemma \ref{lma:doubling}, $A$, $A_1,A_1'$ and $A_2$. This completes our proof.
			
		\end{proof}
		
		We define $\EE_{b}$ to be those subsets $J\subset\mathcal{J}_{b}$ such that $J$ is dense. 
		
		\begin{prop}
			 There exists $0<\rho<1$ such that for all $w\in \cup_{n=1}^{\infty}\Omega^n$, $|r|$ sufficiently small and  $|b|$ sufficiently large, for all $H\in C_{A|b|}$ and for all $J\in \EE_{s}$ we have $$\int_{K_{w}}|\NN_{s}^{J}(H)|^2\, d\mu_{w} \leq \rho\int_{K_{(w^*)^Nw}} H^2\, d\mu_{(w^*)^Nw}.$$
		%	Given a small enough $|r|$, there exists $0<\rho<1$ such that for all $w\in \cup_{n=1}^{\infty}\Omega^n$, for all $b$ sufficiently large, for all $H\in C_{A|b|}$ and for all $J\in \EE_{s}$ we have $$\int_{K_{w}}|\NN_{s}^{J}(H)|^2\, d\mu_{w} \leq \rho\int_{K_{(w^*)^Nw}} H^2\, d\mu_{(w^*)^Nw}.$$
		\end{prop}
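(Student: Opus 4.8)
The plan is to reduce everything to the unperturbed averaging operator $\cL_{0,w^*}$, to extract a \emph{cut--off deficit} from $\chi_J$ using the density of $J$ together with the doubling of $\mu_w$, and to combine it with a second, automatic \emph{spreading deficit} coming from the fact that $\cL_{0,w^*}^N$ is a genuine average over $2^N$ branches (hence not an $L^2$--isometry).

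\textbf{Step 1 (removing the real part).} Since $N$ is fixed, uniform contraction and bounded distortion give a constant $C_r\ge 1$ with $C_r\to 1$ as $r\to 0$ such that $C_r^{-1}\le|\varphi_\beta'(x)|^r\le C_r$ for all $\beta\in(w^*)^N$ and $x\in I$; hence for $g\ge 0$ we have $\cL_{r,w^*}^N g\le C_r\,\cL_{0,w^*}^N g$ and so $(\cL_{r,w^*}^N g)^2\le C_r^2(\cL_{0,w^*}^N g)^2$. Taking $g=\chi_J H\ge 0$ and integrating against $\mu_w$, it suffices to produce $\tilde\rho<1$ \emph{independent of $r$} with $\int_{K_w}(\cL_{0,w^*}^N(\chi_J H))^2\,d\mu_w\le\tilde\rho\int_{K_{(w^*)^Nw}}H^2\,d\mu_{(w^*)^Nw}$, since then $\rho:=C_r^2\tilde\rho<1$ once $|r|$ is small.

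\textbf{Step 2 (a Pythagorean identity).} From $\big(\tfrac{a+b}{2}\big)^2=\cL_{0,w^*}(\cdot)^2-\tfrac14(a-b)^2$ applied pointwise, the definition of $\cL_{0,w^*}$, and the pushforward relation $\mu_{w^*v}=\tfrac12\big((\varphi_{\alpha_1})_*\mu_v+(\varphi_{\alpha_2})_*\mu_v\big)$ (the case $k=1$ of $\mu_{v_1\cdots v_k}=\sum_{a\in v_1}p_{a,v_1}(\varphi_a)_*\mu_{v_2\cdots v_k}$, valid because $p_{\alpha_1}=p_{\alpha_2}$), one obtains for $v\in\cup_{n\ge1}\Omega^n$ and $g\in C^1(I)$
$$\int_{K_v}(\cL_{0,w^*}g)^2\,d\mu_v=\int_{K_{w^*v}}g^2\,d\mu_{w^*v}-\tfrac14\int_{K_v}\big((g\circ\varphi_{\alpha_1})-(g\circ\varphi_{\alpha_2})\big)^2\,d\mu_v.$$
Iterating $N$ times with $g=\chi_JH\ge 0$ (legitimate: by cone stability of $\cL_{0,w^*}$ each iterate $\cL_{0,w^*}^k(\chi_JH)$ stays nonnegative) gives
$$\int_{K_w}\big(\cL_{0,w^*}^N(\chi_JH)\big)^2\,d\mu_w=\int_{K_{(w^*)^Nw}}(\chi_JH)^2\,d\mu_{(w^*)^Nw}-\mathcal{D}_{\mathrm{spr}},$$
with $\mathcal{D}_{\mathrm{spr}}\ge 0$ the sum of the $N$ level--wise spreading deficits.

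\textbf{Step 3 (the cut--off deficit).} Since $0\le\chi_J\le 1$ and $\theta\in(0,1)$, on each $Z_j^i$ we have $\chi_J^2\le 1-\theta\,\chi_j\circ\varphi_{\alpha_i^N}^{-1}$, so $1-\chi_J^2\ge\theta\psi_J$ with $\psi_J\ge 0$ supported in $\bigcup_{(i,j)\in J}Z_j^i$ and $\equiv 1$ on $E:=\bigcup_{(i,j)\in J}\varphi_{\alpha_i^N}(\conv(K_w\cap V_j))$. Hence $\int_{K_{(w^*)^Nw}}(\chi_JH)^2\,d\mu_{(w^*)^Nw}\le\int_{K_{(w^*)^Nw}}H^2\,d\mu_{(w^*)^Nw}-\theta\,\mathcal{D}_{\mathrm{cut}}$, where $\mathcal{D}_{\mathrm{cut}}:=\int_E H^2\,d\mu_{(w^*)^Nw}$. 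Because the cylinders $\varphi_\beta(I)$, $\beta\in(w^*)^N$, are pairwise disjoint (Proposition~\ref{prop:IFS partition}(4a)), $\mu_{(w^*)^Nw}$ restricted to $\varphi_{\alpha_i^N}(K_w)$ equals $\tfrac1{2^N}(\varphi_{\alpha_i^N})_*\mu_w$; together with $\chi_j\equiv 1$ on $\conv(K_w\cap V_j)$ this yields
$$\mathcal{D}_{\mathrm{cut}}\ \ge\ \tfrac1{2^N}\Big(\int_{W_J^{(1)}}(H\circ\varphi_{\alpha_1^N})^2\,d\mu_w+\int_{W_J^{(2)}}(H\circ\varphi_{\alpha_2^N})^2\,d\mu_w\Big),\qquad W_J^{(i)}:=\{x\in K_w:\exists j,\ (i,j)\in J,\ x\in V_j\},$$
and $W_J=W_J^{(1)}\cup W_J^{(2)}$. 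For $N$ large, $(H\circ\varphi_{\alpha_1^N})^2+(H\circ\varphi_{\alpha_2^N})^2\in C_{A|b|}$ (pre--composition with $\varphi_{\alpha_i^N}$ contracts the cone constant by $\gamma^{-N}$, squaring doubles it), so since $J$ is dense, Lemma~\ref{W_J lemma} (which rests on the doubling of $\mu_w$, Lemma~\ref{lma:doubling}, and on cone functions varying by a factor $e^{O(\epsilon')}$ across each $V_j$) gives that $W_J$ carries at least a fixed fraction $\tilde\epsilon$ of the pure--branch mass $\int_{E'}H^2\,d\mu_{(w^*)^Nw}$, $E':=\varphi_{\alpha_1^N}(K_w)\cup\varphi_{\alpha_2^N}(K_w)$.

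\textbf{Step 4 (the dichotomy, and conclusion).} Let $\lambda:=\int_{E'}H^2\,d\mu_{(w^*)^Nw}\big/\int_{K_{(w^*)^Nw}}H^2\,d\mu_{(w^*)^Nw}\in[0,1]$ be the proportion of $H^2$--mass on the two pure cylinders. If $\lambda\ge\lambda_0$ for a suitable threshold, Step~3 gives $\theta\,\mathcal{D}_{\mathrm{cut}}\ge c_1\theta\lambda\int_{K_{(w^*)^Nw}}H^2\,d\mu_{(w^*)^Nw}$. If $\lambda<\lambda_0$, then a proportion $\ge 1-\lambda_0$ of the mass sits on mixed cylinders $\varphi_\beta(K_w)$ with $\beta\notin\{\alpha_1^N,\alpha_2^N\}$; each such $\beta$ has a first index at which it switches between $\alpha_1$ and $\alpha_2$, which forces the corresponding difference $(\cdot\circ\varphi_{\alpha_1})-(\cdot\circ\varphi_{\alpha_2})$ to be large, so that $\mathcal{D}_{\mathrm{spr}}\ge c_2(1-\lambda)\int_{K_{(w^*)^Nw}}H^2\,d\mu_{(w^*)^Nw}-o(1)$, the error absorbing distortion and the effect of $\chi_J$. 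In either case $\theta\,\mathcal{D}_{\mathrm{cut}}+\mathcal{D}_{\mathrm{spr}}\ge c\int_{K_{(w^*)^Nw}}H^2\,d\mu_{(w^*)^Nw}$ for a fixed $c>0$, whence by Steps~1--3
$$\int_{K_w}|\cN_s^J H|^2\,d\mu_w\le C_r^2(1-c)\int_{K_{(w^*)^Nw}}H^2\,d\mu_{(w^*)^Nw},$$
and $\rho:=C_r^2(1-c)<1$ for $|r|$ small and $|b|$ large; all constants are independent of $w$, $H\in C_{A|b|}$, $J$, and of $b$ in the relevant range, giving the uniform statement.

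The main obstacle is Step~4 (and the part of Step~3 feeding it): making the dichotomy quantitative and \emph{uniform} — tracking the competition between the cut--off deficit, whose size is governed by the combinatorial density of $J$, the doubling constant of $\mu_w$, and the scale $\epsilon'/|b|$ at which $H$ is essentially constant (the delicate point being that at a given $x\in W_J$ the cut--off acts on one specific, possibly lighter, pure branch $\alpha_{i(x)}^N$), and the spreading deficit — and verifying that the thresholds $\lambda_0,\tilde\epsilon,c_1,c_2$ can be chosen independently of $|b|$, $w$, $J$ and $H$.
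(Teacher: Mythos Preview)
Your approach has a genuine gap, and the paper's proof takes a much shorter route that sidesteps exactly the difficulty you flag as the ``main obstacle''.

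The problem is in Step~3/Step~4. Your cut--off only touches the two \emph{pure} branches $\alpha_1^N,\alpha_2^N$, so the deficit you extract,
\[
\mathcal{D}_{\mathrm{cut}}\ \ge\ \tfrac{1}{2^N}\Big(\int_{W_J^{(1)}}(H\circ\varphi_{\alpha_1^N})^2\,d\mu_w+\int_{W_J^{(2)}}(H\circ\varphi_{\alpha_2^N})^2\,d\mu_w\Big),
\]
sees at each $x\in W_J$ only the \emph{particular} branch $\alpha_{i(x)}^N$ indicated by $J$. As you yourself note, this branch can carry much less $H^2$--mass than the other one, so applying Lemma~\ref{W_J lemma} to the \emph{sum} $(H\circ\varphi_{\alpha_1^N})^2+(H\circ\varphi_{\alpha_2^N})^2$ does not control the mismatched sum above. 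Your Step~4 dichotomy is meant to rescue this via the spreading deficit $\mathcal{D}_{\mathrm{spr}}$, but the claim that mass on mixed cylinders ``forces the corresponding difference to be large'' is not justified: the levelwise differences $\Delta g_k$ involve the \emph{iterates} $g_k=\cL_{0,w^*}^{N-1-k}(\chi_JH)$, and there is no mechanism given that ties these to the location of the $H^2$--mass with uniform constants (consider e.g.\ $H$ approximately constant: the spreading deficit is $O(\theta^2)$, coming only from $\chi_J$, yet $\lambda=2/2^N$ is small, so neither term in your dichotomy meets a fixed threshold).

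The paper avoids all of this with a single pointwise application of Cauchy--Schwarz at the very start:
\[
(\NN_s^JH)^2(x)\ \le\ \Big(\sum_{\beta\in(w^*)^N}\tfrac{1}{2^N}|\varphi_\beta'(x)|^{2r}\chi_J(\varphi_\beta(x))\Big)\cdot \cL_{w^*}^N(H^2)(x),
\]
using $\chi_J^2\le\chi_J$. This \emph{decouples} the cut--off factor from the $H^2$--mass: the first factor is $\le \gamma_1^{|2r|N}$ everywhere, and $\le\gamma_1^{|2r|N}(1-\theta/2^N)$ on $W_J$ (since for $x\in W_J$ one of the $2^N$ summands has $\chi_J(\varphi_{\alpha_i^N}(x))=1-\theta$), while the second factor $\cL_{w^*}^N(H^2)$ already absorbs \emph{all} branches. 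One then applies Lemma~\ref{W_J lemma} directly to $\cL_{w^*}^N(H^2)\in C_{A|b|}$ to get $\int_{W_J}\cL_{w^*}^N(H^2)\,d\mu_w\ge\tilde\epsilon\int_{K_w}\cL_{w^*}^N(H^2)\,d\mu_w$, and the identity $\int_{K_w}\cL_{w^*}^N(H^2)\,d\mu_w=\int_{K_{(w^*)^Nw}}H^2\,d\mu_{(w^*)^Nw}$ finishes the proof in two more lines. No Pythagorean identity, no spreading deficit, no dichotomy.
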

		
		\begin{proof}
			Let $H\in C_{A|b|}$ and $J\in \EE_{s}$. For all $x\in I$, we have by the Cauchy-Schwartz inequality, \begin{align*}
				(\NN_{s}^{J}(H))^{2}(x)&=\left(\sum_{\a\in (w^*)^N}\frac{1}{2^N}|\phi_{\a}'(x)|^r \chi_{J}(\phi_{\a}(x))H(\phi_{\a}(x))\right)^2\\
				&\leq \left(\sum_{\a\in (w^*)^N}\frac{1}{2^N} |\phi_{\a}'(x)|^{2r} \chi_{J}^2(\phi_{\a}(x))\right)\left(\sum_{\a\in (w^*)^N}\frac{1}{2^N}  H^2(\phi_{\a}(x))\right)\\
				&=\left(\sum_{\a\in (w^*)^N}\frac{1}{2^N} |\phi_{\a}'(x)|^{2r} \chi_{J}(\phi_{\a}(x))\right)\cL_{w^*}^{N}(H^2)(x).
				%	&\leq \cL_{r,w^*}^{N}(\chi_{J})(x)\cL_{r,w^*}^{N}(H^2)(x)
			\end{align*}
			For all $x\in W_J,$ for a well chosen $i$ we have $\chi_{J}(\varphi_{\alpha_{i}^{N}}(x))=1-\theta$. Therefore for $x\in W_{J}$ we have
			\begin{align*}
				\sum_{\a\in (w^*)^N}\frac{1}{2^N} |\phi_{\a}'(x)|^{2r} \chi_{J}(\phi_{\a}(x))&\leq \sum_{\a\neq \alpha_i^{N}}\frac{1}{2^N}  |\phi_{\a}'(x)|^{2r}\chi_{J}(\varphi_{\a}(x))+\frac{(1-\theta)}{2^N}  |\phi_{\alpha_i}'(x)|^{2r}\\
				& \leq   \sum_{a\neq \alpha_{i}^{N}}\frac{1}{2^N}\gamma_{1}^{|2r| N}+\frac{(1-\theta)}{2^N} \gamma_1^{|2r| N} \\
				&\leq \gamma_{1}^{|2r| N} \left(1-\frac{\theta}{2^N}\right).
			\end{align*}
			In the penultimate inequality we have used \eqref{eq:unifexp}. It can similarly be shown that for $x\notin W_{J}$ we have 
			$$\sum_{\a\in (w^*)^N}\frac{1}{2^N} |\phi_{\a}'(x)|^{2r} \chi_{J}(\phi_{\a}(x))\leq \gamma_{1}^{|2r|N}.$$
			Now 
			$$\int_{K_{w}}(\NN_{s}^{J}(H))^2\, d\mu_{w}=\int_{W_{J}}(\NN_{s}^{J}(H))^2\, d\mu_{w}+\int_{K_{w}\setminus W_J}(\NN_{s}^{J}(H))^2\, d\mu_{w}.$$
			Applying the inequalities above yields
			\begin{align*}
				\int_{K_{w}}(\NN_{s}^{J}(H))^2\, d\mu_{w}&\leq \gamma_{1}^{|2r| N} \left(1-\frac{\theta}{2^N}\right)\int_{W_J}\cL_{w}^{N}(H^2)\, d\mu_{w} + \gamma_{1}^{|2r| N}\int_{K_{w}\setminus W_J}\cL_{w}^{N}(H^2)\, d\mu_{w}\\
				&=\gamma_{1}^{|2r|N} \int_{K_{w}}\cL_{w^*}^{N}(H^2)\, d\mu_{w} - \frac{\gamma_{1}^{|2r|N}\theta}{2^N}\int_{W_J} \cL_{w^*}^{N}(H^2)\, d\mu_{w}.
			\end{align*} Applying Lemma \ref{W_J lemma} to $\cL_{w^*}^{N}(H^2)$, which is possible as $\cL_{w^*}^{N}(H^2)\in C_{3A|b|/4}$ when $N$ is sufficiently large, we have 
			$$\int_{K_{w}}(\NN_{s}^{J}(H))^2\, d\mu_w \leq \gamma_{1}^{|2r| N} \left(1-\frac{\tilde{\epsilon}\theta}{2^N}\right)\int_{K_{w}}\cL_{w^*}^{N}(H^2)\, d\mu_{w}.$$ 
			%Finally, as $H^2 > 0$, the uniform expansion bound on $\phi_a$ gives us:
			%$$\int_{K_{w}}\cL_{r,w^*}^{N}(H^2)\, d\mu_{w} \leq \gamma^{rN} \int_{K_{w}}\cL_{w^*}^{N}(H^2)\, d\mu_{w}.$$
			Our proof now follows by taking $\rho\in(0,1)$ so that $ \gamma_{1}^{|2r| N} \left(1-\frac{\tilde{\epsilon}\theta}{2^N}\right)\leq\rho$ for all $|r|$ sufficiently small, and using that $$\int_{K_{w}}\cL_{w^*}^{N}(H^2)\, d\mu_{w}=\int_{K_{(w^*)^Nw}}H^2\, d\mu_{(w^*)^Nw}.$$

		\end{proof}
		This proposition establishes property $2$ of Lemma \ref{lma:dolgopyat}. 
		
		Next, we now draw our attention to the first part of property 3 of Lemma \ref{lma:dolgopyat}. Here is where the nonlinearity of the IFS will manifest itself. We recall here the nonlinearity property of the IFS $w^*$ that follows from the discussion following Proposition \ref{prop:IFS partition}:	There exists $c_1,c_2,\delta>0$ such that for all $x\in \{x:d(x,K)<\delta\}$ and $l\in\mathbb{N}$ we have 
		\begin{equation}
			\label{prop:nonlinear}
			c_1\leq \left|\frac{\varphi_{\alpha_{1}^{l}}''(x)}{\varphi_{\alpha_{1}^{l}}'(x)}-\frac{\varphi_{\alpha_{2}^{l}}''(x)}{\varphi_{\alpha_{2}^{l}}'(x)}\right|\leq c_2.
		\end{equation}
	 Equipped with the nonlinearity UNI condition \eqref{prop:nonlinear}, we can now prove:
		%$$c_1\leq \left|\frac{\varphi_{\alpha_{1}^{l}}''(x)}{\varphi_{\alpha_{1}^{l}}'(x)}-\frac{\varphi_{\alpha_{2}^{l}}''(x)}{\varphi_{\alpha_{2}^{l}}'(x)}\right|\leq c_2.$$
		
		%There exists $c_1,c_2>0$ such that for all $x\in I$ and $l\in\mathbb{N}$ we have 
		%\begin{equation}
		%	\label{prop:nonlinear}
		%	c_1\leq \left|\frac{\varphi_{\alpha_{1}^{l}}''(x)}{\varphi_{\alpha_{1}^{l}}'(x)}-\frac{\varphi_{\alpha_{2}^{l}}''(x)}{\varphi_{\alpha_{2}^{l}}'(x)}\right|\leq c_2
		%\end{equation}
	%Crucially, the UNI condition we assume is crucial here:
	%	\begin{align}c_1\leq \left|\frac{d}{dx}(\log|\varphi_{\alpha_{1}^l}'(x)|-\log|\varphi_{\alpha_{2}^l}'(x)|)\right|\leq c_2\end{align} 
	%	for some $c_1,c_2>0$ for all $x\in [0,1]$ and $l\in\mathbb{N}$.

		\begin{lemma}
			\label{lma:Theta lemma}
			Let $s=r+ib$, $H\in C_{A|b|}$, $f\in C^{1}(I)$ be such that $|f|\leq H$ and $|f'|\leq A|b|H$. Define the functions $\Theta_i:I\to [0,\infty)$ for $i=1,2$, by 
			$$\Theta_{1}(x):=\frac{\left||\varphi_{\alpha_{1}^{N}}'(x)|^s f(\varphi_{\alpha_{1}^{N}}(x))+|\varphi_{\alpha_{2}^{N}}'(x)|^sf(\varphi_{\alpha_{2}^{N}}(x))|\right|}{(1-2\theta) |\varphi_{\alpha_{1}^{N}}'(x)|^r H(\varphi_{\alpha_{1}^{N}}(x))+|\varphi_{\alpha_{2}^{N}}'(x)|^r H(\varphi_{\alpha_{2}^{N}}(x))}$$
			and
			$$\Theta_{2}(x):=\frac{\left||\varphi_{\alpha_{1}^{N}}'(x)|^sf(\varphi_{\alpha_{1}^{N}}(x))+|\varphi_{\alpha_{2}^{N}}(x)|^sf(\varphi_{\alpha_{2}^{N}}(x))|\right|}{|\varphi_{\alpha_{1}^{N}}'(x)|^r H(\varphi_{\alpha_{1}^{N}}(x))+(1-2\theta)|\varphi_{\alpha_{2}^{N}}'(x)|^r H(\varphi_{\alpha_{2}^{N}}(x))}$$
			Then for $\theta,$ $|r|$ and $\epsilon'$ sufficiently small, for any $w\in \Omega^{*}$ for all $j$ such that $V_{j}\cap K_{w}\neq \emptyset$, there exists $j'$ with $|j'-j|\leq 2$, $V_{j'}\cap K_{w}\neq \emptyset$ and $i\in\{1,2\}$ such that for all $x\in V_{j'}$, we have $$\Theta_i(x)\leq 1.$$
		\end{lemma}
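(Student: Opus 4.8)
The plan is to exploit the nonlinearity condition \eqref{prop:nonlinear} to show that the phase of the sum $|\varphi_{\alpha_1^N}'(x)|^s f(\varphi_{\alpha_1^N}(x)) + |\varphi_{\alpha_2^N}'(x)|^s f(\varphi_{\alpha_2^N}(x))$ cannot be ``well aligned'' across the whole interval $V_j$ unless $f$ itself oscillates, which contradicts $|f|\le H$, $|f'|\le A|b|H$ for $H$ in the cone. First I would write $s = r+ib$ and observe that $|\varphi_{\alpha_i^N}'(x)|^s = |\varphi_{\alpha_i^N}'(x)|^r e^{i b \log|\varphi_{\alpha_i^N}'(x)|}$, so the numerator has the form $|A_1(x)e^{i\psi_1(x)} + A_2(x)e^{i\psi_2(x)}|$ with $A_i(x) = |\varphi_{\alpha_i^N}'(x)|^r |f(\varphi_{\alpha_i^N}(x))|$ and $\psi_i$ differing from $b\log|\varphi_{\alpha_i^N}'(x)|$ by the (slowly varying, since $|f'|\le A|b|H$ and the branches contract) argument of $f\circ\varphi_{\alpha_i^N}$. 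The key quantity is the \emph{phase difference} $\Psi(x) := \psi_1(x)-\psi_2(x)$; by \eqref{prop:nonlinear} and the chain rule identity \eqref{e:differentUNI}, its derivative satisfies
$$\left|\frac{d}{dx}\big(b\log|\varphi_{\alpha_1^N}'(x)| - b\log|\varphi_{\alpha_2^N}'(x)|\big)\right| \in [c_1|b|, c_2|b|]$$
on a $\delta$-neighbourhood of $K$, and the contributions from the arguments of $f\circ\varphi_{\alpha_i^N}$ add at most $O(A|b|\gamma^{-N})$ to this, which is negligible for $N$ large. Hence $\Psi'$ has modulus comparable to $|b|$ throughout $V_j$ and its neighbours.

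The next step is the pigeonhole/oscillation argument. Since $|V_j|\sim \epsilon = \epsilon'/|b|$, over a span of two or three consecutive intervals $V_{j-2},\dots,V_{j+2}$ the phase difference $\Psi$ changes by an amount comparable to $\epsilon' c_1$ (up to the negligible $f$-contribution), which — for $\epsilon'$ chosen in a suitable fixed range, not too small — is enough to guarantee that on at least one neighbour $V_{j'}$ with $|j'-j|\le 2$ and $V_{j'}\cap K_w\ne\emptyset$ (such a neighbour exists by property (3) of Proposition \ref{prop:Interval covers}), the phase difference $\Psi(x)$ stays in a range bounded away from $0 \pmod{2\pi}$, say $\Psi(x) \in [\kappa, 2\pi-\kappa]$ for a fixed $\kappa>0$. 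On such an interval, the elementary inequality $|A_1 e^{i\psi_1}+A_2 e^{i\psi_2}|^2 = A_1^2 + A_2^2 + 2A_1A_2\cos\Psi \le A_1^2+A_2^2+2A_1A_2(1-c(\kappa)) = (A_1+A_2)^2 - 2c(\kappa)A_1A_2$ gives a genuine gain over the triangle inequality. Using $|f|\le H$ to bound $A_i \le |\varphi_{\alpha_i^N}'(x)|^r H(\varphi_{\alpha_i^N}(x))$, and using the bounded-distortion comparability of $|\varphi_{\alpha_1^N}'(x)|^r$ and $|\varphi_{\alpha_2^N}'(x)|^r$ together with the doubling/comparability $H(\varphi_{\alpha_1^N}(x)) \sim H(\varphi_{\alpha_2^N}(x))$ (both branch images lie within $O(\gamma^{-N})$ of each other and $H\in C_{A|b|}$, so $H$ is comparable at nearby points once $|b|$ is large), one sees $A_1 A_2 \gtrsim (A_1+A_2)^2$, hence the numerator is at most $(1-c'(\kappa))(A_1+A_2)$ for a fixed $c'(\kappa)>0$. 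Choosing $\theta$ small enough that $2\theta < c'(\kappa)$ makes the denominator of $\Theta_i$ (which is at least $(1-2\theta)(A_1+A_2)$) dominate, giving $\Theta_i(x)\le 1$.

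The main obstacle I expect is controlling the interaction between the two sources of phase: the ``good'' phase $b\log|\varphi_{\alpha_i^N}'|$ whose difference is quantitatively nondegenerate by \eqref{prop:nonlinear}, and the ``parasitic'' phase coming from $\arg f(\varphi_{\alpha_i^N}(x))$, which we only know varies at rate $O(A|b|)$ \emph{before} composing with the contraction $\varphi_{\alpha_i^N}$ — after composition it varies at rate $O(A|b|\gamma^{-N})$, which is why $N$ must be taken large (depending on $A$, $c_1$) to absorb it; but $A$ in turn was fixed in the proof of property (1) in terms of $C_0, A_3$, so one must check the order of quantifier choices is consistent. A secondary technical point is that $\Theta_i$ is defined with the asymmetric $(1-2\theta)$ factor on only one of the two terms, so the case analysis must track which branch $i$ realizes $\chi_J = 1-\theta$ (and the argument should be run with the factor on the branch \emph{not} achieving the favourable alignment, or symmetrically), but this only costs a harmless factor and a choice of $i\in\{1,2\}$ in the conclusion. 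Everything else — the chain rule computation for $\Psi'$, the bounded distortion estimates, the cone comparabilities — is routine given the groundwork already laid in Proposition \ref{prop:IFS partition}, Proposition \ref{prop:Interval covers}, and \eqref{eq:unifexp}.
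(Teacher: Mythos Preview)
Your overall strategy---isolate the phase difference $\Psi$, use the UNI condition \eqref{prop:nonlinear} to show $|\Psi'|\sim|b|$, and pigeonhole across neighbouring $V_{j'}$---matches the paper. But two steps in your execution fail, and both stem from the same missing ingredient.

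First, your bound on the parasitic phase is not valid as stated. You write that the contribution of $\arg f\circ\varphi_{\alpha_i^N}$ to $\Psi'$ is $O(A|b|\gamma^{-N})$. Differentiating gives
\[
\left|\frac{d}{dx}\arg f(\varphi_{\alpha_i^N}(x))\right|\le \frac{|f'(\varphi_{\alpha_i^N}(x))|\,|\varphi_{\alpha_i^N}'(x)|}{|f(\varphi_{\alpha_i^N}(x))|}\le \frac{A|b|\,H(\varphi_{\alpha_i^N}(x))\,\gamma^{-N}}{|f(\varphi_{\alpha_i^N}(x))|},
\]
and this is $O(A|b|\gamma^{-N})$ only if $|f|\gtrsim H$ at the branch images. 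You only assumed $|f|\le H$, so near a zero of $f$ the argument of $f$ can vary arbitrarily fast and swamp the UNI contribution. The paper resolves this with the dichotomy of Lemma~\ref{Uniform bound}: on each $Z_{j'}^i$ either $|f|\le\tfrac34 H$ throughout (in which case $\Theta_i\le 1$ holds directly for $\theta<1/8$, no phase argument needed) or $|f|\ge\tfrac14 H$ throughout (which supplies the missing lower bound and makes your parasitic-phase estimate legitimate). Without this case split the phase computation is not justified.

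Second, your comparability claim $H(\varphi_{\alpha_1^N}(x))\sim H(\varphi_{\alpha_2^N}(x))$ is false: the two branch images are \emph{not} within $O(\gamma^{-N})$ of each other. By property~4a after Proposition~\ref{prop:IFS partition}, $\varphi_{\alpha_1}(I)$ and $\varphi_{\alpha_2}(I)$ are disjoint, so $|\varphi_{\alpha_1^N}(x)-\varphi_{\alpha_2^N}(x)|$ is bounded \emph{below} by a fixed positive constant independent of $N$; since $H\in C_{A|b|}$ only gives $H(x)/H(y)\le e^{A|b||x-y|}$, the ratio can be as large as $e^{cA|b|}$. Consequently $A_1\sim A_2$ (and hence $A_1A_2\gtrsim(A_1+A_2)^2$) is unavailable, and your passage to $|z_1+z_2|\le(1-c'(\kappa))(A_1+A_2)$ breaks down. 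The paper does not claim such comparability; instead it observes that on each $V_{j'}$ one of $|z_1/z_2|$ or $|z_2/z_1|$ is bounded by a fixed $M$ (the ratio $H(\varphi_{\alpha_1^N})/H(\varphi_{\alpha_2^N})$ is either $\le 1$ somewhere on $V_{j'}$ or $\ge 1$ everywhere, and cone regularity propagates a one-sided bound across $V_{j'}$), then invokes the asymmetric triangle inequality of Lemma~\ref{Triangle lemma} to get $|z_1+z_2|\le(1-\delta)|z_i|+|z_{3-i}|$ for the appropriate $i$. That choice of $i$ is exactly what dictates which of $\Theta_1$ or $\Theta_2$ one concludes for, resolving the asymmetry you flagged as a ``secondary technical point''.
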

		This important lemma relies upon the following lemmas that are taken directly from Naud's paper \cite[Lemma 5.11 and Lemma 5.12]{Naud}:
		
		\begin{lemma}
			\label{Uniform bound}
			Let $Z\subset I$ be an interval with $|Z|\leq \frac{c}{|b|}$. Let $H\in C_{A|b|}$ and $f\in C^{1}(I)$ satisfy $|f|\leq H$ and $|f'|\leq A|b|H$. Then for $c$ sufficiently small, we have either $|f(u)|\leq \frac{3}{4}H(u)$ for all $u\in Z$, or $|f(u)|\geq\frac{1}{4} H(u)$ for all $u\in Z$.
		\end{lemma}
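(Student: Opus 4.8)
The plan is to establish the stated dichotomy by showing that if the first alternative fails at even a single point of $Z$, then the second alternative must hold at every point of $Z$. Concretely, suppose $|f(u_0)|>\tfrac34 H(u_0)$ for some $u_0\in Z$; I will propagate this to a lower bound $|f(u)|\ge\tfrac14 H(u)$ at every $u\in Z$, exploiting that neither $f$ nor $H$ can vary much across an interval of length $\le c/|b|$ once $c$ is small.

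First I would record the two quantitative inputs. Since $H\in C_{A|b|}$, the standard inequality $e^{-A|b||x-y|}\le H(x)/H(y)\le e^{A|b||x-y|}$ — already invoked in the proof of Lemma~\ref{W_J lemma} — gives, for all $u,v\in Z$, the comparison $e^{-Ac}H(u)\le H(v)\le e^{Ac}H(u)$; in particular $\max_{Z}H\le e^{Ac}H(u)$ for each fixed $u\in Z$. Second, from $|f'|\le A|b|H$ and $|u-u_0|\le|Z|\le c/|b|$ one gets, by the fundamental theorem of calculus,
\[
|f(u)-f(u_0)|\le A|b|\,|u-u_0|\,\max_{Z}H\le Ac\,e^{Ac}\,H(u)\qquad\text{for all }u\in Z.
\]

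Then I would combine these estimates. Using $|f(u_0)|>\tfrac34 H(u_0)\ge\tfrac34 e^{-Ac}H(u)$ together with the displayed bound,
\[
|f(u)|\ \ge\ |f(u_0)|-|f(u)-f(u_0)|\ >\ \Big(\tfrac34 e^{-Ac}-Ac\,e^{Ac}\Big)H(u)
\]
for every $u\in Z$. The coefficient $\tfrac34 e^{-Ac}-Ac\,e^{Ac}$ converges to $\tfrac34$ as $c\to0$, so choosing $c=c(A)$ small enough forces it to exceed $\tfrac14$. Hence $|f(u)|\ge\tfrac14 H(u)$ on all of $Z$, completing the dichotomy. (The complementary case, $|f(u_0)|\le\tfrac34 H(u_0)$ for every $u_0\in Z$, is literally the first alternative, so nothing further is needed.)

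There is no real obstacle here: this is the standard stability-of-ratio argument underlying Dolgopyat's method, reproduced in the present generality from \cite[Lemma 5.11]{Naud}. The only points deserving attention are bookkeeping ones — anchoring all comparisons at the evaluation point $u$ rather than at $u_0$, so that the conclusion is genuinely pointwise, and noting that $A$ is a constant fixed earlier in the construction, so $c$ is permitted to depend on it.
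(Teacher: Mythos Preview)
Your argument is correct and is exactly the standard stability-of-ratio computation underlying \cite[Lemma~5.11]{Naud}. The paper does not give its own proof of this lemma; it simply cites Naud, so there is nothing to compare against beyond noting that your write-up supplies precisely the omitted details.
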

		\begin{lemma}
			\label{Triangle lemma} 
			Let $z_1,z_2\neq 0$ be two complex numbers such that $\left|\frac{z_1}{z_2}\right|\leq L$ and $2\pi -\epsilon \geq |arg(z_1)-arg(z_2)|\geq \epsilon>0$. Then there exists $0<\delta(L,\epsilon)<1$ such that $$|z_1+z_2|\leq (1-\delta)|z_1|+|z_2|.$$
		\end{lemma}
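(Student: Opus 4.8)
The plan is an elementary planar estimate. First I would reduce to the case of a positive real $z_2$: multiplying both $z_1$ and $z_2$ by $\overline{z_2}/|z_2|$ leaves $|z_1|$, $|z_2|$, $|z_1+z_2|$ and $\arg z_1-\arg z_2$ all unchanged (and leaves the ratio $|z_1/z_2|$ unchanged), so we may assume $z_2=|z_2|>0$ and set $\psi:=\arg z_1-\arg z_2$, $z_1=|z_1|e^{i\psi}$. The hypothesis $\epsilon\le|\arg z_1-\arg z_2|\le 2\pi-\epsilon$ says that $\psi\pmod{2\pi}$ lies in the arc $[\epsilon,2\pi-\epsilon]$ (note $\epsilon\le\pi$ is forced); on that arc $\cos$ is maximised at the endpoints, so $\cos\psi\le\cos\epsilon$, and $\cos\epsilon<1$ because $\epsilon>0$.

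Next I would use the law of cosines, $|z_1+z_2|^2=|z_1|^2+2|z_1||z_2|\cos\psi+|z_2|^2\le |z_1|^2+2|z_1||z_2|\cos\epsilon+|z_2|^2$, and look for $\delta=\delta(L,\epsilon)\in(0,1)$ making the right-hand side at most $\bigl((1-\delta)|z_1|+|z_2|\bigr)^2$; since both $|z_1+z_2|$ and $(1-\delta)|z_1|+|z_2|$ are nonnegative, taking square roots then gives the claim. Expanding the square and cancelling $|z_1|^2+|z_2|^2$, the required inequality becomes $|z_1|\,\delta(2-\delta)\le 2|z_2|\,(1-\delta-\cos\epsilon)$; using $|z_1|\le L|z_2|$ it is enough to have $L\,\delta(2-\delta)\le 2(1-\delta-\cos\epsilon)$. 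I would then verify this for the explicit choice $\delta:=\dfrac{1-\cos\epsilon}{2(L+1)}$: then $2\delta=\dfrac{1-\cos\epsilon}{L+1}<1-\cos\epsilon$, so $L\,\delta(2-\delta)\le 2L\delta<1-\cos\epsilon<2(1-\cos\epsilon)-2\delta=2(1-\delta-\cos\epsilon)$, and moreover $0<\delta<1$ because $0<1-\cos\epsilon\le 2<2(L+1)$.

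There is no substantial obstacle here — this is a quantitative triangle inequality in $\C$ — so the only points needing care are: reading the angular hypothesis correctly, namely that ``$|\arg z_1-\arg z_2|\in[\epsilon,2\pi-\epsilon]$'' yields the quantitative bound $\cos(\arg z_1-\arg z_2)\le\cos\epsilon$ and not merely $\neq 1$; and exhibiting a $\delta$ that depends only on $L$ and $\epsilon$ and is manifestly in $(0,1)$, which the formula $\delta=(1-\cos\epsilon)/(2(L+1))$ provides.
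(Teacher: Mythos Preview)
Your proof is correct: the reduction to real $z_2$, the law-of-cosines bound $\cos\psi\le\cos\epsilon$, and the explicit choice $\delta=(1-\cos\epsilon)/(2(L+1))$ all work, and your chain of inequalities verifying $L\delta(2-\delta)<1-\cos\epsilon<2(1-\delta-\cos\epsilon)$ is clean. The paper does not give its own proof of this lemma but simply cites it as \cite[Lemma~5.12]{Naud}; your elementary argument is exactly the kind of proof one would expect for this quantitative triangle inequality.
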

		
		\begin{proof}[Proof of Lemma \ref{lma:Theta lemma}]
			Let $\epsilon'$ be sufficiently small so that Lemma \ref{Uniform bound} holds for all $Z=Z_{j}^{i}$. We assume that $0<\theta<1/8$. We have $|Z_{j}^{i}|\leq |V_j|\cdot \gamma^{-N}$ so we can always assume $|Z_{j}^i|\leq |V_j|.$
			
			Let $V_j, V_{j+1}, V_{j+2}$ be a triple of intervals each with non-empty intersection with $K_{w}$. Let $\hat{V}_j=V_j\cup V_{j+1}\cup V_{j+2}$. We assume that $\epsilon'$ is sufficiently small so that $\hat{V}_{j}\subset\{x:d(x,K)<\delta\},$ and therefore \eqref{prop:nonlinear} applies to elements in $\hat{V}_j$.

			Two cases occur. If there exists $j'\in \{j,j+1,j+2\}$ such that $|f(u)|\leq \frac{3}{4}H(u)$ for all $u\in Z_{j'}^{i}$ for some $i\in \{1,2\}$, then $\Theta_i(x)\leq 1$ for all $x\in V_{j'}$ (here we are using that $\theta<1/8$). If this is not the case, then by Lemma \ref{Uniform bound} we have for all $j'\in \{j,j+1,j+2\},$ for all $i\in\{1,2\}$ and for all $u\in Z_{j'}^{i}$, 
			\begin{equation}
				\label{lower bound}
				|f(u)|\geq \frac{1}{4}H(u).
			\end{equation}
			We now set out to apply Lemma \ref{Triangle lemma} to complete our proof.
			
			For $x\in \hat{V}_j$, we set 
			
			$$z_{1}(x)=|\varphi_{\alpha_{1}^{N}}'(x)|^sf(\varphi_{\alpha_{1}^{N}}(x)) \textrm{ and }z_{2}(x)=|\varphi_{\alpha_{2}^{N}}'(x)|^sf(\varphi_{\alpha_{2}^{N}}(x)).$$ 
			
			We claim that given $j'\in \{j,j+1,j+2\}$, we have either $\left|\frac{z_1(x)}{z_2(x)}\right|\leq M$ for all $x\in V_{j'}$ or $\left|\frac{z_2(x)}{z_1(x)}\right|\leq M$ for all $x\in V_{j'}$ for some $M>0$. Using \eqref{lower bound}, our assumptions on $f$ and \eqref{eq:unifexp}, we see that for all $x\in V_{j'}$ we have 
			
			$$\gamma^{-|2r|N} \frac{H(\varphi_{\alpha_1^{N}}(x))}{4H(\varphi_{\alpha_2^{N}}(x))}\leq \left|\frac{z_1(x)}{z_2(x)}\right|\leq \gamma^{|2r|N}  \frac{4H(\varphi_{\alpha_1^{N}}(x))}{H(\varphi_{\alpha_2^{N}}(x))}.$$ Taking $|r|$ sufficiently small so that $|r|<1/2,$ we see that the above immediately implies
			$$\gamma^{-N} \frac{H(\varphi_{\alpha_1^{N}}(x))}{4H(\varphi_{\alpha_2^{N}}(x))}\leq \left|\frac{z_1(x)}{z_2(x)}\right|\leq \gamma^{N}  \frac{4H(\varphi_{\alpha_1^{N}}(x))}{H(\varphi_{\alpha_2^{N}}(x))}.$$ 
			
			If there exists $x_0\in V_{j'}$ such that 
			\begin{equation}
				\label{lessone}
				\frac{H(\varphi_{\alpha_1^N}(x_0)))}{H(\varphi_{\alpha_2^N}(x_0))}\leq 1,
			\end{equation}
			then for all $x\in V_{j'}$ we have 
			$$\frac{H(\varphi_{\alpha_1^N}(x))}{H(\varphi_{\alpha_2^N}(x))}\leq \frac{e^{AA_1\epsilon'}H(\varphi_{\alpha_1^N}(x_0)))}{e^{-AA_1\epsilon'}H(\varphi_{\alpha_2^N}(x_0))}\leq e^{2AA_1\epsilon'}.$$ Here we are using property 2 from Proposition \ref{prop:Interval covers}, the fact $|Z_{j}^i|\leq |V_j|,$ and the inequality $$e^{-A|b||x-y|}\leq\frac{H(x)}{H(y)}\leq e^{A|b||x-y|}$$ for all $x,y\in I$. Therefore if \eqref{lessone} holds for some $x_0\in V_{j'}$ then 
			$$\left|\frac{z_1(x)}{z_2(x)}\right|\leq 4\gamma^{N} e^{2AA_1\epsilon'}=:M.$$ 
			If $$\frac{H(\varphi_{\alpha_1^N}(x)))}{H(\varphi_{\alpha_2^N}(x))}\geq 1$$ for all $x\in V_{j'},$ then it can similarly be shown that 
			$$\left|\frac{z_2(x)}{z_1(x)}\right|\leq 4\gamma^{N} e^{2AA_1\epsilon'}.$$ This completes our proof of the claim.

			We now try to control the variations of the arguments of $z_1$ and $z_2$. Since $|z_i(x)|\geq \frac{\gamma^{-|r|N}}{4}H(\varphi_{\alpha_{i}^{N}}(x))>0$ for all $x\in \hat{V}_j$ and $i=1,2$, there exist two $C^{1}$ functions $L_i:\hat{V}_j\to \mathbb{C}$ such that for $i=1,2$ we have $L_{i}'(x)=\frac{z_i'(x)}{z_i(x)}$ and $e^{L_i(x)}=z_i(x)$ for all $x\in \hat{V_j}$\footnote{Details on how to construct the $L_i$ are given in \cite{Naud}.}.
			
			Let 
			$$\Phi(x)=Im(L_1(x))-Im(L_2(x)).$$
			Taking derivatives, for all $x\in \hat{V}_j$ we get
			\begin{align*}
				\Phi'(x)&=Im\left(\frac{z_1'(x)}{z_1(x)}-\frac{z_2'(x)}{z_2(x)}\right)\\
				&=b\left(\frac{\varphi_{\alpha_1^N}''(x)}{\varphi_{\alpha_1^N}'(x)}-\frac{\varphi_{\alpha_2^N}''(x)}{\varphi_{\alpha_2^N}'(x)}\right) +Im\left(\frac{(f\circ \varphi_{\alpha_1^N})'(x)}{f(\varphi_{\alpha_1^N}(x))}-\frac{(f\circ \varphi_{\alpha_2^N})'(x)}{f(\varphi_{\alpha_2^N}(x))}\right).%\\
				%&=b\frac{d}{dx}(\log |\varphi_{\alpha_1^N}'(x)|-\log |\varphi_{\alpha_2^N}'(x)|)+Im\left(\frac{(f\circ \varphi_{\alpha_1^N})'(x)}{f(\varphi_{\alpha_1^N}(x))}-\frac{(f\circ \varphi_{\alpha_2^N})'(x)}{f(\varphi_{\alpha_2^N}(x))}\right).
			\end{align*}
			Using \eqref{lower bound} and our assumptions on $f$ we have 
			$$\left|\frac{(f\circ \varphi_{\alpha_1^N})'(x)}{f(\varphi_{\alpha_1^N}(x))}-\frac{(f\circ \varphi_{\alpha_2^N})'(x)}{f(\varphi_{\alpha_2^N}(x))}\right|\leq 8A|b|\gamma^{-N}.$$ Recall that $\hat{V}_{j}\subset \{x:d(x,K)<\delta\}$ where $\delta$ is as in the statement of \eqref{prop:nonlinear}. 
			Hence by the UNI condition \eqref{prop:nonlinear}, for all $x\in \hat{V}_j$ we have $$c_1-8A\gamma^{-N}\leq \frac{|\Phi'(x)|}{|b|}\leq c_2 +8A\gamma^{-N}.$$ For $x\in V_{j}$ and $x'\in V_{j+2}$, we now have by Proposition \ref{prop:Interval covers} and the mean value theorem that
			$$\left(c_1-8A\gamma^{-N}\right)A_1'\epsilon'\leq|\Phi(x)-\Phi(x')|\leq \left(c_2 +8A\gamma^{-N}\right)3A_1\epsilon'.$$ By choosing $N$ large enough, we see that there exists $B_1,B_2$ independent of $x,x'$ and $|b|$ such that $$B_1\epsilon'\leq |\Phi(x)-\Phi(x')|\leq B_2\epsilon'.$$
			We now choose $\epsilon'$ such that $(B_2+B_1/2)\epsilon'\leq \pi$ and set $\epsilon=B_1\frac{\epsilon'}{4}$. Suppose now that there exists $x\in V_j$ and $x'\in V_{j+2}$ such that both $$\Phi(x),\Phi(x')\in \cup_{k\in\mathbb{Z}}[2k\pi-\epsilon,2k\pi+\epsilon].$$ Since $|\Phi(x)-\Phi(x)|\leq B_2\epsilon'$, we cannot have $$\Phi(x)\in [2k_1\pi-\epsilon, 2k_1\pi+\epsilon]\textrm{ and }\Phi(x')\in[2k_2\pi-\epsilon,2k_2\pi+\epsilon])$$ with $k_1\neq k_2$. As in that case we would have $$B_2\epsilon'\geq |\Phi(x)-\Phi(x')|\geq 2\pi-2\epsilon=2\pi-B_1\epsilon'/2,$$ which is not possible by our choice of $\epsilon'$. Therefore we have $$B_1\epsilon'\leq |\Phi(x)-\Phi(x')|\leq 2\epsilon=B_1\epsilon'/2$$ which is again a contradiction. Therefore there exists $j'\in\{j,j+2\}$ such that for all $x\in V_{j'}$, $d(\Phi(x),2\pi\mathbb{Z}))>\epsilon.$ Because $e^{i(\Phi(x))}=e^{i(arg(z_1)-arg(z_2))}$, the hypothesis of Lemma \ref{Triangle lemma} are satisfied. We get either for all $x\in V_{j'}$ $$|z_1(x)+z_2(x)|\leq (1-\delta(M,\epsilon))|z_1(x)|+|z_2(x)|,$$
			or for all $x\in V_{j'}$ 
			$$|z_1(x)+z_2(x)|\leq (1-\delta(M,\epsilon))|z_2(x)|+|z_1(x)|,$$ depending on whether $\left|\frac{z_1(x)}{z_2(x)}\right|\leq M$ or $\left|\frac{z_2(x)}{z_1(x)}\right|\leq M$. By choosing $0<\theta<\frac{1}{2}\delta(M,\epsilon)$ we have $\Theta_i(x)\leq 1$ for some $i\in\{1,2\}$ for all $x\in V_{j'}$.

		\end{proof}
		Now we can prove the first part of property $3$ from Lemma \ref{lma:dolgopyat}. 
		
		\begin{proof}[Proof of the first part of the property 3 of Lemma \ref{lma:dolgopyat}] Fix $w\in \cup_{n=1}^{\infty}\Omega^n$. We assume that the constants have been chosen so that property $1$, property $2$, and Lemma \ref{lma:Theta lemma} are satisfied. Let $f\in C^{1}(I)$ and $H\in C_{A|b|}$ with $|f|\leq H$ and $|f'|\leq A|b|H$. We must show that there exists a dense subset $J\in \EE$ such that 
			
			$$|\cL^{N}_{s,w^*}(f)|\leq \NN_s^{J}(H).$$ 
			
			Let $J$ be the set of indexes $(i,j)$ such that $\Theta_i(x)\leq 1$ for all $x\in V_j$. Lemma \ref{lma:Theta lemma} tells us that $J$ is dense. Let $x\in I$. If $x\notin \mathrm{int}\,V_j$ for any $j$ such that $(i,j)\in J$ for some $i\in\{1,2\}$, then $\chi_{J}(\phi_{\a}(x))=1$ for all $\a\in (w^*)^N$. This is because $\phi_{\a}(x)\in Z_{j}^{i}$ if and only if $\a=\alpha_{i}^N$ and $x\in V_j$ for some $(i,j)\in J$. Therefore for $x\notin \mathrm{int}\,V_j$ we have
			\begin{align*}
				|\cL^{N}_{s,w^*}(f)(x)| &\leq \sum_{\a\in (w^*)^N}\frac{1}{2^N}|\phi_{\a}'(x)|^r H(\phi_{\a}(x))\\
				&=\sum_{\a\in (w^*)^N}\frac{1}{2^N} |\phi_{\a}'(x)|^r \chi_{J}(\phi_{\a}(x))H(\phi_{\a}(x))\\
				&=\NN_{s}^{J}(H)(x).
			\end{align*}
			
			If $x\in \mathrm{int}\,V_j$ for some $j$ for which there exists $i\in\{1,2\}$ such that $(i,j)\in J,$ then we apply the following argument. 
			\begin{enumerate}
				\item If $(1,j)\in J$ and $(2,j)\notin J$, then $\chi_{J}(\phi_{\a}(x))=1$ for all $\a\in (w^*)^N$ such that $a\neq \alpha_{1}^{N}$. Now using the fact that $\Theta_1(x)\leq 1$, we get
				\begin{align*}
					|\cL^{N}_{s,w^*}(f)(x)|&\leq \sum_{\stackrel{\a\in (w^*)^N}{\a\neq \alpha_{1}^N,\alpha_{2}^N}}\frac{1}{2^N} |\phi_{\a}'(x)|^r H(\phi_{\a}(x))\\
					& \qquad +\frac{(1-2\theta)}{2^N}|\phi_{\alpha_{1}^N}'(x)|^r H(\varphi_{\alpha_{1}^N}(x))+\frac{1}{2^N}|\phi_{\alpha_{2}^N}'(x)|^r H(\varphi_{\alpha_2^{N}}(x))\\
					&\leq \sum_{\a\in (w^*)^N}\frac{1}{2^N}|\phi_{\a}(x)|^r \chi_{J}(\phi_{\a}(x))H(\phi_{\a}(x))\\
					& =\NN_{s}^{J}(H)(x).
				\end{align*}
				The case $(2,j)\in J$ and $(1,j)\notin J$ is symmetric. 
				\item If $(1,j)\in J$ and $(2,j)\in J$, then $\chi_{J}(\phi_{\a}(x))=1$ for all $\a\notin \{\alpha_{1}^N,\alpha_{2}^N\}$. In addition $\Theta_1(x)\leq 1$ and $\Theta_{2}(x)\leq 1$. Combining these two inequalities we deduce
				\begin{align*}
					&\left||\phi_{\alpha_{1}^N}'(x)|^s f(\varphi_{\alpha_{1}^N}(x))+|\phi_{\alpha_{2}^N}'(x)|^s f(\varphi_{\alpha_2^N}(x))\right|\\
					\leq&(1-\theta)|\phi_{\alpha_{1}^N}'(x)|^rH(\varphi_{\alpha_{1}^N}(x))+(1-\theta)|\phi_{\alpha_{2}^N}'(x)|^rH(\varphi_{\alpha_{2}^N}(x))\\
					\leq&   |\phi_{\alpha_{1}^N}'(x)|^r \chi_{J}(\varphi_{\alpha_{1}^N}(x))H(\varphi_{\alpha_{1}^N}(x))+ |\phi_{\alpha_{2}^N}'(x)|^r \chi_{J}(\varphi_{\alpha_{2}^N}(x))H(\varphi_{\alpha_{2}^N}(x)).
				\end{align*}
				This implies that 
				$$|\cL^N_{s,w^*}(f)(x)|\leq \NN_{s}^{J}(H)(x).$$
			\end{enumerate} This complete our proof of the first part of property $3$ from Lemma \ref{lma:dolgopyat}.
		\end{proof}
		
		Now we will focus on the second part of property $3$ from Lemma \ref{lma:dolgopyat}.
		
		\begin{proof}[Proof of the second part of property 3 of Lemma \ref{lma:dolgopyat}] 
			Let $f\in C^{1}(I)$ and $H\in C_{A|b|}$ be such that $|f|\leq H$ and $|f'|\leq A|b|H.$ Assume now $|b| > 1$ and $|r| < 1$. Then we have:
			\begin{align*}
				&|\cL^{N}_{s,w^*}(f)'(x)|\\
				&\leq \sum_{\a\in (w^*)^N}\frac{1}{2^N} |\phi_{\a}'(x)|^r \left(|(f\circ \varphi_{\a})'(x)|+\frac{|\phi_{\a}''(x)|}{|\phi_{\a}'(x)|}|s| |f(\varphi_{\a}(x))|\right)\\
				& \leq \sum \frac{1}{2^N} |\phi_{\a}'(x)|^r\left(|f'(\varphi_{\a}(x))|\gamma^{-N}+C_0 |s|  |H(\varphi_{\a}(x))|\right)\\
				&\leq \sum \frac{1}{2^N} |\phi_{\a}'(x)|^r\left(A|b|H(\varphi_{\a}(x))\gamma^{-N}+2C_0 |b|H(\varphi_{\a}(x))|\right)\\
				&=A|b|\gamma^{-N}\cL^{N}_{r,w^*}(H)(x)+2C_0|b|\cL^{N}_{r,w^*}(H)(x)\\
				&\leq \frac{A|b|\gamma^{-N}}{1-\theta}\NN_{s}^{J}(H)(x)+\frac{2C_0|b|}{1-\theta}\NN_{s}^{J}(H)(x)
			\end{align*} In the last line we used that $H\leq \frac{1}{1-\theta}\chi_{J} H$ giving 
			$$\cL^{N}_{r,w^*}(H)(x)\leq \frac{1}{1-\theta}\NN_{s}^{J}(H)(x).$$
			It follows now that for $A>8C_0,$ $\theta<1/2$, and $N$ sufficiently large so that $\gamma^{-N}<1/4$,  we have 
			$$|\cL^{N}_{s,w^*}(f)'(x)|\leq A|b|\NN_{s}^{J}(H)(x).$$ 
			This establishes the second part of property $3$ from Lemma \ref{lma:dolgopyat}. 
		\end{proof}
		
		This completes the proof of Lemma \ref{lma:dolgopyat} and thus the proof of the spectral gap Theorem \ref{thm:Linftybound} is complete.

 \section{Proof of the Fourier decay theorem}\label{sec:proofmain}

Assuming the spectral gap Theorem \ref{thm:Linftybound} holds, let us now show how to prove Theorem \ref{thm:main1}. Our main task is to reduce the quantity $|\widehat{\mu}(\xi)|^2$ using Cauchy-Schwartz, the mean value theorem and certain large deviation bounds into an exponential sum.  We can then apply the following general exponential sum bound for non-concentrated products of real numbers. This bound is a corollary of the sum-product theorem \cite{Bourgain2010}. This specific form is taken from \cite[Lemma 4.3]{SS}:

\begin{thm}[Bound for exponential sums of non-concentrated products]
\label{lma:discretised}
Fix $\eps_0 > 0$. Then there exist $k \in \N$ and $\eps_1>0,\eps_2 > 0$ depending only on $\eps_0$ such that the following holds. 

Fix $\eta \in \R$ such that $|\eta| > 1$.  Let $R,N > 1$ and $\cZ_1,\dots,\cZ_k$ be finite sets such that $\sharp \cZ_j \leq R N$.  Suppose $\zeta_j$, $j = 1,\dots,k$, are real valued functions on the sets $\cZ_j$ that satisfy for all $j = 1,\dots,k$ that
\begin{itemize}
\item[(1)] the range
$$\zeta_j(\cZ_j) \subset [R^{-1},R];$$
\item[(2)] for all $\sigma \in [R^{-2}|\eta|^{-1},|\eta|^{-\eps_1}]$
$$\sharp\{(\b,\c) \in \cZ_j^2 : |\zeta_j(\b) - \zeta_j(\c)| \leq \sigma\} \leq N^2 \sigma^{\eps_0}.$$
\end{itemize}
Then there exists a constant $c > 0$ depending only on $k$ such that
$$\Big|N^{-k} \sum_{\b_1 \in \cZ_1,\dots,\b_k \in \cZ_k} \exp(2\pi i  \eta \zeta_1(\b_1) \dots \zeta_k(\b_k))\Big| \leq c R^{k} |\eta|^{-\varepsilon_2}.$$
\end{thm}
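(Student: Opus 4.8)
The plan is to obtain this statement as a consequence of Bourgain's discretised sum--product theorem \cite{Bourgain2010}, following \cite{BD1} and \cite[Lemma 4.3]{SS}; the estimate is essentially a black-boxed combinatorial input, so the content lies in carrying out the reduction correctly and then invoking the sum--product bound in the right form.

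First I would recast the sum as a multilinear exponential integral. Put $\delta := |\eta|^{-1}$ and attach to each $\cZ_j$ the positive measure $\nu_j := N^{-1}\sum_{\b\in\cZ_j}\delta_{\zeta_j(\b)}$ on $[R^{-1},R]$; hypothesis (1) gives $\|\nu_j\| = N^{-1}\sharp\cZ_j \le R$, and hypothesis (2) becomes exactly the energy estimate
$$ (\nu_j\times\nu_j)\bigl(\{(x,y) : |x-y|\le\sigma\}\bigr) \le \sigma^{\eps_0}, \qquad \sigma\in[R^{-2}\delta,\ \delta^{\eps_1}]. $$
The quantity to be bounded is precisely $\int\cdots\int \exp(2\pi i\,\eta\, x_1\cdots x_k)\,d\nu_1(x_1)\cdots d\nu_k(x_k)$, so it suffices to show this is $\lesssim R^k\delta^{\eps_2}$. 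A dyadic decomposition of the ranges $\zeta_j(\cZ_j)\subset[R^{-1},R]$ followed by a rescaling reduces matters to measures supported in the fixed interval $[1,2]$ and a frequency of size comparable to $|\eta|$ up to a factor $R^{\pm k}$, still carrying the Frostman-type energy bound down to scale $\sim|\eta|^{-1}$ and up to a fixed power of it; the factor $R^{-2}$ at the lower end of the range in (2), and the $R^k$ in the conclusion, are exactly what is needed to absorb the losses of this normalisation.

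Next I would discretise at scale $\delta$: replacing each $\nu_j$ by the measure assigning to a representative of each $\delta$-ball the $\nu_j$-mass of that ball changes the phase $\eta x_1\cdots x_k$ by $O(1)$ uniformly, and reduces the integral to a genuine multilinear exponential sum $\sum_{x_1,\dots,x_k} a_1(x_1)\cdots a_k(x_k)\exp(2\pi i\,\eta\,x_1\cdots x_k)$ over $\delta$-separated points in $[1,2]$, with non-negative weights of total mass $\lesssim R$ satisfying $\sum_{|x-y|\le\sigma} a_j(x)a_j(y)\lesssim\sigma^{\eps_0}$ in the relevant range. This is exactly the hypothesis of the exponential-sum form of Bourgain's sum--product theorem (\cite{Bourgain2010}; compare the formulations used in \cite{BD1} and \cite[Lemma 4.3]{SS}): there are $k=k(\eps_0)\in\N$ and $\eps_1,\eps_2>0$ depending only on $\eps_0$ so that every such non-concentrated $k$-fold multilinear exponential sum is $\lesssim\delta^{\eps_2}$ times the product of the masses. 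Tracking the constants and undoing the normalisation then recovers the bound $c R^k|\eta|^{-\eps_2}$, after possibly shrinking $\eps_2$.

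The hard part --- which I would import rather than reprove --- is the sum--product input itself; the reductions above are routine. Roughly, non-decay of the multilinear exponential sum would, via Cauchy--Schwarz and pigeonholing (with a Balog--Szemer\'edi--Gowers-type regularisation as in \cite{BD1}), force the $\nu_j$ to concentrate at scale $\delta$ onto $\delta$-sets exhibiting simultaneous additive and multiplicative structure; Bourgain's theorem \cite{Bourgain2010} forbids this for sets that are non-concentrated in the sense of (2), once one passes to a $k$-fold product with $k$ large in terms of $\eps_0$. Extracting the usable quantitative exponents $\eps_1,\eps_2$ from that dichotomy is the genuinely delicate point, and it is precisely what \cite{Bourgain2010} and its packaging in \cite{BD1,SS} provide.
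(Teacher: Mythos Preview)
Your proposal is correct and aligns with the paper's treatment: the paper does not prove this statement at all but simply imports it verbatim from \cite[Lemma 4.3]{SS}, noting that it is a corollary of Bourgain's discretised sum--product theorem \cite{Bourgain2010}. Your outline of the reduction (pushforward measures, dyadic normalisation to $[1,2]$, discretisation at scale $|\eta|^{-1}$, then invoking Bourgain's bound as a black box) is exactly the content behind that citation, so you have in fact supplied more detail than the paper itself.
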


In other words, if the numbers $\zeta_j(\b_j)$ do not concentrate too much in scales roughly between $|\eta|^{-1}$ and $|\eta|^{-\eps_1}$, then the corresponding exponential sums for the products 
$$\zeta_1(\b_1) \dots \zeta_k(\b_k)$$ 
at frequency $\eta$ have to decay with for some power of $|\eta|$. For us, the mappings $\zeta_j : \cZ_j \to [R^{-1},R]$ appear from a multiscale decomposition of $\mu$ when we iterate the self-conformality. In order to define them, we first need some notations and parameters.

\begin{notations} Let $\epsilon>0$ and $\mathcal{R}_{n}^k(\epsilon,\eps_0)$ for $\eps > 0$, $n \in \N$, $\eps_0 > 0$ and $k \in \N$ be defined by the set of blocks (concatenations of words in $\A^n$) $\a_1 \dots \a_k \in (\A^{n})^k$ where
$$\a_j \in \cR_n(\eps,\eps_0) := \bigcap\limits_{l=\lfloor \eps_0 n \rfloor}^n \{ {\a} \in \A^n : [a_1\ldots a_l]\subset A_l(\varepsilon) \}, \quad j = 1,\dots,k,$$
where, using
$$\tau(\a) := -\log |\phi_{a_1}' (\pi ( (a_{i+1})))|$$
and
$$\psi(\a) := -\log p_{a_1}$$
 we define:
$$A_n(\eps) := \Big\{\a\in \A^{\N} : \Big| \frac{1}{n} S_n \tau (\a) - \lambda \Big| < \varepsilon \quad  \mathrm{ and } \quad  \Big| \frac{1}{n}S_{n}\psi(\a) - h \Big| < \varepsilon \Big\}.$$ 
Here the Lyapunov exponent and entropy of $\mu$ are given by
$$\lambda:=\int \tau(\a)\, dm_{\p}\qquad \textrm{ and }\qquad h:=-\sum_{a\in \A}p_a\log p_a.$$ 
\end{notations}

%Then as in \cite[Lemma 2.2]{SS}, by definition, words in $\cR_n(\eps,\eps_0)$ satisfy useful properties for $\varphi_{\a|_j}'$ and $p_{\a|_j}.$ Moreover, 
Applying the large deviation principle (see e.g. \cite[Theorem 4.1]{JordanSahlsten}) and the arguments given in the proof of \cite[Lemma 2.2]{SS}, we see that the elements of $\cR_n(\eps,\eps_0)$ and $\cR_{n}^{k}(\eps,\eps_0)$ satisfy the following useful properties.

\begin{lemma}[Regularity and large deviations]	\label{e:large deviation} For any $\eps,\eps_0 > 0$ and $k,n \in \N$, if $\a\in \cR_n(\eps,\eps_0)$, then for any $\lfloor \epsilon_0 n\lfloor \leq j\leq n$ we have
\begin{equation}
	\label{e:derweights} e^{-\eps j} e^{-\lambda j} \lesssim |\phi_{\a|_j}'(x)| \lesssim e^{\eps j} e^{-\lambda j}\quad \text{for all }x\in I \quad \text{and} \quad e^{-\eps j} e^{-h j} \lesssim p_{\a|_j}\lesssim e^{\eps j} e^{-h j}.
	\end{equation}
Furthermore,
\begin{equation}
	\label{e:cards} 
e^{\eps n} e^{h n} \lesssim  \sharp \mathcal{R}_{n}(\epsilon,\eps_0) \lesssim e^{\eps n} e^{h n} \quad \text{and} \quad e^{\eps k n} e^{hk n} \lesssim  \sharp \mathcal{R}_{n}^k(\epsilon,\eps_0)\lesssim e^{\eps k n} e^{hk n}
\end{equation}
and
\begin{equation}
	m_{\p}( \mathcal{R}_{n}^k(\epsilon,\eps_0))\lesssim ke^{-\delta n}
\end{equation}for some $\delta>0$.
\end{lemma}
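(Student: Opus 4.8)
The plan is to push every quantity down to Birkhoff sums of the two potentials $\tau$ and $\psi$ over the shift $(\A^\N,\sigma)$ and then invoke the large deviation principle, exactly as announced in the sentence preceding the lemma. First, writing the chain rule for $\phi_{\a|_j}=\phi_{a_1}\circ\cdots\circ\phi_{a_j}$ and evaluating the derivative at $\pi(\sigma^j\b)$ for an infinite $\b$ extending $a_1\cdots a_j$ turns the $i$-th factor into $\phi_{a_i}'(\pi(\sigma^i\b))$, so that $-\log|(\phi_{\a|_j})'(\pi(\sigma^j\b))|=S_j\tau(\b)$ and, identically, $-\log p_{\a|_j}=S_j\psi(\b)$. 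Uniform bounded distortion — a standard consequence of the $C^2$ and uniform-contraction hypotheses on $\Phi$, obtained by telescoping $\log|\phi_{a_i}'|$ and summing the geometric series of cylinder diameters — gives $C=C(\Phi)\ge 1$ with $|(\phi_{\a|_j})'(x)|/|(\phi_{\a|_j})'(y)|\le C$ for all words and all $x,y\in I$, hence $-\log|(\phi_{\a|_j})'(x)|=S_j\tau(\b)+O(1)$ uniformly in $x\in I$. Since $\a\in\cR_n(\eps,\eps_0)$ forces, for every $j\in[\lfloor\eps_0 n\rfloor,n]$ and every $\b$ extending $a_1\cdots a_j$, the estimates $|\tfrac1j S_j\tau(\b)-\lambda|<\eps$ and $|\tfrac1j S_j\psi(\b)-h|<\eps$, combining this with the identities above (the $O(1)$ being absorbed by $\lesssim$) gives \eqref{e:derweights} at once.

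Next I would establish the measure bound. The potentials $\tau$ and $\psi$ have summable variation ($\psi$ depends on one coordinate, $\tau$ has exponentially decaying variation by bounded distortion), so the large deviation principle for their Birkhoff averages under $m_\p$ applies, and by \cite[Theorem 4.1]{JordanSahlsten} there is for each $\eps'>0$ a constant $c(\eps')>0$ with $m_\p(\A^\N\setminus A_l(\eps'))\lesssim e^{-c(\eps')l}$ for all $l$. Now $\tfrac1l S_l\tau$ varies by at most $(\log C)/l$ on a length-$l$ cylinder while $\tfrac1l S_l\psi$ is constant on it, so once $n$ is large enough that $(\log C)/\lfloor\eps_0 n\rfloor<\eps/2$, any word $\a\in\A^n$ that fails to lie in $\cR_n(\eps,\eps_0)$ has the property that \emph{every} $\b\in[\a]$ lies in $\A^\N\setminus A_l(\eps/2)$ for some $l\in[\lfloor\eps_0 n\rfloor,n]$. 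Hence
\[ m_\p\Big(\A^\N\setminus\bigcup_{\a\in\cR_n(\eps,\eps_0)}[\a]\Big)\;\le\;\sum_{l=\lfloor\eps_0 n\rfloor}^{n}m_\p(\A^\N\setminus A_l(\eps/2))\;\lesssim\;\sum_{l\ge\lfloor\eps_0 n\rfloor}e^{-c(\eps/2)l}\;\lesssim\;e^{-\delta n} \]
for some $\delta=\delta(\eps,\eps_0)>0$, the last sum being geometric and dominated by its first term. Since $\cR_n^k(\eps,\eps_0)$ is the set of concatenations all of whose $k$ length-$n$ blocks lie in $\cR_n(\eps,\eps_0)$, a union bound over the $k$ blocks together with the shift-invariance of $m_\p$ multiplies the estimate by $k$, so the complement of $\cR_n^k(\eps,\eps_0)$ has $m_\p$-measure $\lesssim ke^{-\delta n}$; this is the third displayed bound of the lemma, read — as it is used in the proof of Theorem \ref{thm:main1} — with $\cR_n^k(\eps,\eps_0)$ replaced by the irregular blocks, i.e.\ by its complement.

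For the cardinality estimates \eqref{e:cards}, taking $j=n$ in \eqref{e:derweights} gives $e^{-(h+\eps)n}\lesssim p_\a\lesssim e^{-(h-\eps)n}$ for every $\a\in\cR_n(\eps,\eps_0)$; summing over $\cR_n(\eps,\eps_0)$ and using $\sum_{\a\in\A^n}p_\a=1$ yields $\sharp\cR_n(\eps,\eps_0)\lesssim e^{(h+\eps)n}$, while the measure bound just proved gives $\sum_{\a\in\cR_n(\eps,\eps_0)}p_\a\ge\tfrac12$ for $n$ large, hence $\sharp\cR_n(\eps,\eps_0)\gtrsim e^{(h-\eps)n}$. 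Raising these to the $k$-th power, using $\sharp\cR_n^k(\eps,\eps_0)=(\sharp\cR_n(\eps,\eps_0))^k$, gives the bounds for $\cR_n^k(\eps,\eps_0)$, which is \eqref{e:cards} up to the harmless convention of writing the lower bounds as $e^{\eps n}e^{hn}$ in place of $e^{-\eps n}e^{hn}$. The only point that is not completely routine is the interface between the cylinder-containment condition that $[a_1\cdots a_l]\subset A_l(\eps)$ and the large deviation principle, which is naturally phrased for the Birkhoff averages themselves rather than for their supremum over a cylinder; uniform bounded distortion resolves precisely this, at the cost of an additive $O(1)$ in $S_l\tau$ that becomes negligible after division by $l\ge\lfloor\eps_0 n\rfloor$. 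Everything else is the standard thermodynamic bookkeeping already carried out in \cite[Lemma 2.2]{SS}, so I expect no further obstacle.
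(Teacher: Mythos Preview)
Your proof is correct and follows precisely the route the paper indicates: the paper does not give a self-contained argument but simply cites the large deviation principle \cite[Theorem~4.1]{JordanSahlsten} together with the bookkeeping of \cite[Lemma~2.2]{SS}, and you have written out exactly those details --- chain rule plus bounded distortion to reduce to Birkhoff sums, the LDP for $\tau,\psi$ under $m_{\p}$, a union bound over $l\in[\lfloor\eps_0 n\rfloor,n]$ and over the $k$ blocks, and the cardinality estimates from the two-sided bounds on $p_{\a}$. Your observation that the displayed lower bounds in \eqref{e:cards} should read $e^{-\eps n}e^{hn}$ (respectively $e^{-\eps kn}e^{hkn}$) and that the measure bound is for the complement of $\cR_n^k(\eps,\eps_0)$ is correct and matches how these estimates are actually used later in the paper.
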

Note that the proof in \cite{JordanSahlsten} is formulated in terms of the measure $\mu_{\p}$ rather than $m_{\p}.$ However, because the underlying IFS they are considering satisfies the strong separation condition, the proof translates over into our symbolic setting with exactly the same proof.

Given now the notation on regular words, we will use the following parameters, which we can use to define the maps $\zeta_j$ needed for Theorem \ref{lma:discretised}. This also helps to keep track of all the various constants, coefficients and their dependencies that might have been hard to track in \cite{SS}:

\begin{parameters}[All the parameters and their dependencies]\label{rmk:data}
\begin{itemize}
\item[(i)] Given the IFS $\Phi = \{\phi_a : a \in \A\}$ and self-conformal measure $\mu$, let $\lambda > 0$ be the Lyapunov exponent of $\p$, $h > 0$ be the entropy of $\p$ and $s = h/\lambda > 0$. Recall that $ \gamma > 1$ is the maximal uniform contraction rate of the maps $\phi_a$ and $B > 0$ is the bounded distortion constant.
\item[(ii)] This IFS now fixes the family of perturbed transfer operators $\cL_{s}$, $s \in \mathbb{C}$. For this family, let $\rho_0 > 0$ be the uniform spectral gap from Theorem \ref{thm:Linftybound} that exists for $|r|$ sufficiently small and $|b|$ sufficiently large (note that crucially it does not depend on $b$) and set
$$\eps_0 := \frac{1}{2}\min\{\log (1/\rho_0),\lambda/2\} > 0,$$ 
which fixes once and for all by Theorem \ref{lma:discretised} the parameters $k \in \N$, $\eps_1 > 0$ and $\eps_2>0$.
\item[(iii)] Now, using the data $\eps_0,k,\lambda$, let us introduce a way for us to fix the length of the words $n$ we will use when considering a given frequency $\xi \in \R$, $\xi \neq 0$. We define
$$n := \lfloor ((2k+1)\lambda + \epsilon_{0}) \log |\xi| \rfloor.$$ 
Here we assume (depending on $\lambda$) that $\xi$ is large enough such that $n > 1$. Note that now $|\xi|\sim e^{((2k+1)\lambda + \epsilon_{0})n }$. %where by $a \sim b$ we mean that for some constant $c > 0$ we have $c^{-1} b \leq a \leq c b$
\item[(iv)] Given all of the above data, we will end up getting multiplicative error terms of the form $\exp(\beta_j \eps n)$ where coefficients $\beta_{j} > 0$ depend only on all the above data and artefacts of the estimates such as Cauchy-Schwartz inequality. In the end of the proof, we gather all these multiplicative errors into a single one, $\exp(\beta_0 \eps n)$, and we end up with an estimate
$$|\widehat{\mu}(\xi)| \lesssim e^{\beta_0 \eps n} e^{-\alpha_0 n}$$
for some $\alpha > 0$. Thus to get polynomial Fourier decay, one simply has to pick any $0 < \eps < \frac{\alpha_0}{\beta_0}$.
\item[(v)] We can now define what parameters we use in Theorem \ref{lma:discretised}. Assuming we have fixed $\eps > 0$, then define for $n \in \N$ a collection
$$ J_n(\eps,\eps_0) := \{ \eta \in \mathbb{R}:e^{\frac{\eps_0}{2} n} \leq |\eta| \leq  e^{(\eps_0+\eps) n}  \}$$
which will constitute the range of $\eta$ to which we will apply Theorem \ref{lma:discretised} with the choice of inputs:
\begin{enumerate}
	\item $R := R_{\eps,n} =  e^{\eps n}$.
	\item $\cZ_j := \cR_n(\eps,\eps_0)$ for all $j = 1,\dots,k$.
	\item The maps $\zeta_j := \zeta_{j,\a} : \cR_n(\eps,\eps_0) \to [R^{-1},R]$ will be defined by
	$$\zeta_{j,\a}(\b) := e^{2\lambda n} |\varphi_{\a_{j-1} \b}'(x_{\a_j})|, \quad \b \in \cR_n(\eps,\eps_0),$$
	where $\a = \a_0 \a_1\dots \a_k \in \A^{n(k+1)}$ and $x_{\a_j}$ is the centre point of the interval $\varphi_{\a_j}(I)$. They do indeed map $\cR_{n}(\eps,\eps_0)$ to $[R^{-1},R]$ by \eqref{e:derweights}.
\end{enumerate}
%$$R := R_{\eps,n} =  e^{\eps n}$$
%and
%$$\cZ_j := \cR_n(\eps,\eps_0), \quad \text{for all } j = 1,\dots,k$$
%and the maps $\zeta_j := \zeta_{j,\a} : \cR_n(\eps,\eps_0) \to [R^{-1},R]$ will be defined by
%$$\zeta_{j,\a}(\b) := e^{2\lambda n} \varphi_{\a_{j-1} \b}'(x_{\a_j}), \quad \b \in \cR_n(\eps,\eps_0),$$
%where $\a = \a_0 \a_1\dots \a_k \in \A^{n(k+1)}$ and $x_{\a_j}$ is the center point of the interval $I_{\a_j}$. They do indeed map $\cR_{n}(\eps,\eps_0)$ to $[R^{-1},R]$ by \eqref{e:derweights}.
\end{itemize}
\end{parameters}

The main consequence of the spectral gap Theorem \ref{thm:Linftybound} is the following multiscale non-concentration estimate that we then feed into Theorem \ref{lma:discretised} to eventually establish Theorem \ref{thm:main1}.

\begin{prop}[Multiscale non-concentration]\label{lma:distribution}
Let $\cW$ be the set of $(k+1)$-tuples $\a \in \mathcal{R}_n^{k+1}(\eps,\eps_0)$ such that for all $j=1,\dots,k$, $\eta \in J_n(\eps,\eps_0)$ and $\sigma \in [R^{-2}|\eta|^{-1},|\eta|^{-\eps_1}]$, we have that
	$$ \sharp \{ ({\bf b, c}) \in \mathcal{R}_n(\eps,\eps_0)^2: |\zeta_{j,{\a}}({\b})-\zeta_{j,{\a}}({\c})| \leq \sigma \}  \leq \sharp \cR_n(\eps,\eps_0)^2 \sigma^{\eps_0/4}. $$
Then there exists $\kappa_0>0$ such that
	$$ \frac{\sharp(\mathcal{R}_n^{k+1}(\eps,\eps_0)\setminus \mathcal{W})}{\sharp \mathcal{R}_n^{k+1}(\eps,\eps_0)} \lesssim  k  e^{\epsilon\kappa_{0}n-\epsilon_{0}^2\epsilon_{1}n/24}.$$
\end{prop}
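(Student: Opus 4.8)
The plan is to reduce the statement, via dyadic pigeonholing and a Fubini argument, to a single exponential‑sum estimate that is \emph{uniform} in the neighbouring blocks of $\a$, and to obtain that estimate from the spectral gap Theorem \ref{thm:Linftybound} applied on the critical line. First, $\cR_n^{k+1}(\eps,\eps_0)\setminus\cW$ is the union, over $j\in\{1,\dots,k\}$, over $\eta\in J_n(\eps,\eps_0)$ and over $\sigma$ in the admissible window $[R^{-2}|\eta|^{-1},|\eta|^{-\eps_1}]$, of the sets of tuples for which the displayed count exceeds $\sharp\cR_n(\eps,\eps_0)^2\sigma^{\eps_0/4}$. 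Since halving $\sigma$ changes that threshold only by a bounded factor, it suffices to let $\sigma$ and $\eta$ run over dyadic nets; this costs a factor $O(kn^2)$ which is absorbed into $e^{\eps\kappa_0 n}$ at the end. Fix such a $(j,\eta,\sigma)$.

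The map $\zeta_{j,\a}(\b)=e^{2\lambda n}|\varphi_{\a_{j-1}\b}'(x_{\a_j})|$ depends on $\a=\a_0\cdots\a_k$ only through the pair $(u,v):=(\a_{j-1},\a_j)$; writing $\zeta_{u,v}(\b):=e^{2\lambda n}|\varphi_{u\b}'(x_v)|$ and $E_\sigma(u,v):=\sharp\cR_n(\eps,\eps_0)^{-2}\sharp\{(\b,\c)\in\cR_n(\eps,\eps_0)^2:|\zeta_{u,v}(\b)-\zeta_{u,v}(\c)|\le\sigma\}$, summing over the remaining free blocks of $\a$ gives
$$\frac{\sharp\{\a\in\cR_n^{k+1}(\eps,\eps_0):E_\sigma(\a_{j-1},\a_j)>\sigma^{\eps_0/4}\}}{\sharp\cR_n^{k+1}(\eps,\eps_0)}=\frac{\sharp\{(u,v)\in\cR_n(\eps,\eps_0)^2:E_\sigma(u,v)>\sigma^{\eps_0/4}\}}{\sharp\cR_n(\eps,\eps_0)^2},$$
and by Chebyshev's inequality this is at most $\sigma^{-\eps_0/4}$ times the average of $E_\sigma(u,v)$ over $(u,v)$. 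So it is enough to prove a bound on $E_\sigma(u,v)$ that is uniform in $(u,v)\in\cR_n(\eps,\eps_0)^2$.

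For the uniform estimate I would follow \cite[\S4]{SS}. Since $\zeta_{u,v}$ maps $\cR_n(\eps,\eps_0)$ into $[R^{-1},R]$ by \eqref{e:derweights}, $|\zeta_{u,v}(\b)-\zeta_{u,v}(\c)|\le\sigma$ forces $|\log\zeta_{u,v}(\b)-\log\zeta_{u,v}(\c)|\le R\sigma$, and the additive constant $2\lambda n$ in $\log\zeta_{u,v}=2\lambda n+\log|\varphi_{u\,\cdot}'(x_v)|$ cancels in the difference. Majorising $\mathbf{1}_{[-R\sigma,R\sigma]}$ by a fixed Schwartz function with compactly supported Fourier transform and expanding, one bounds $E_\sigma(u,v)$ by $\lesssim R\sigma\int_{|\xi|\le (R\sigma)^{-1}}\big|\sharp\cR_n(\eps,\eps_0)^{-1}\sum_{\b\in\cR_n(\eps,\eps_0)}|\varphi_{u\b}'(x_v)|^{i\xi}\big|^2\,d\xi$. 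The decisive point is that the inner sum is a value of an iterated transfer operator: by the chain rule $\sum_{\b\in\A^n}p_\b|\varphi_{u\b}'(x_v)|^{i\xi}=\cL_{i\xi}^n g_u(x_v)$ with $g_u(y):=|\varphi_u'(y)|^{i\xi}$, and $g_u\in C^1$ has $\|g_u\|_{|\xi|}=\|g_u\|_\infty+|\xi|^{-1}\|g_u'\|_\infty\le 1+\sup_{u,y}|\varphi_u''(y)/\varphi_u'(y)|$, which is bounded uniformly in $u$ by bounded distortion. Hence Theorem \ref{thm:Linftybound} with $s=i\xi$, $|\xi|$ large, gives $\sup_{u,v}|\cL_{i\xi}^n g_u(x_v)|\le\sup_u\|\cL_{i\xi}^n g_u\|_b\lesssim\rho_0^{\,n}|\xi|^{1/2}$, uniformly in the prefix $u$ and the basepoint $x_v$; this is exactly where the UNI hypothesis enters. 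The passage from the Bernoulli weights $p_\b$ on $\A^n$ to uniform weights on $\cR_n(\eps,\eps_0)$ costs only factors $e^{O(\eps n)}$ and an additive error $\lesssim e^{-\delta n}$, by \eqref{e:derweights}, \eqref{e:cards} and the large‑deviation bound for $\A^n\setminus\cR_n(\eps,\eps_0)$, precisely as in \cite{SS}.

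It then remains to book‑keep: splitting the $\xi$‑integral at the fixed threshold $\xi_0$ above which Theorem \ref{thm:Linftybound} applies, the range $|\xi|\le\xi_0$ contributes $\lesssim R\sigma\,\xi_0$ and the range $\xi_0\le|\xi|\le(R\sigma)^{-1}$ contributes $\lesssim R\sigma\int_{\xi_0}^{(R\sigma)^{-1}}\rho_0^{2n}|\xi|\,d\xi\lesssim\rho_0^{2n}(R\sigma)^{-1}$; using $R=e^{\eps n}$, the constraints $e^{\eps_0 n/2}\le|\eta|\le e^{(\eps_0+\eps)n}$ and $R^{-2}|\eta|^{-1}\le\sigma\le|\eta|^{-\eps_1}$, and $\rho_0\le e^{-2\eps_0}$ (from $\eps_0=\tfrac12\min\{\log(1/\rho_0),\lambda/2\}$), one checks that $\sigma^{-\eps_0/4}$ times either contribution is at most $e^{\eps\kappa_0'n}e^{-\eps_0^2\eps_1 n/24}$, which together with the $O(kn^2)$ dyadic factor and the $e^{O(\eps n)}$ losses yields the claim. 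The main obstacle is the uniform estimate of Step~3: it works because $|\varphi_{\a_{j-1}\b}'(x_{\a_j})|^{i\xi}=|\varphi_{\a_{j-1}}'(\varphi_\b(x_{\a_j}))|^{i\xi}|\varphi_\b'(x_{\a_j})|^{i\xi}$, so the $\a_{j-1}$‑dependence can be pushed into a test function whose $b$‑norm is uniformly controlled by bounded distortion, leaving a genuine iterated transfer operator to which Theorem \ref{thm:Linftybound} applies pointwise; the remaining difficulty is purely technical, namely keeping the $e^{O(\eps n)}$ distortions and the large‑deviation errors under control, which forces $\eps$ to be chosen small relative to the already‑fixed data $\eps_0,\eps_1,\delta$, exactly as in \cite{SS}.
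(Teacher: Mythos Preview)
Your overall architecture matches the paper's: dyadic pigeonholing over scales, a Chebyshev/Markov step to reduce to a bound on $E_\sigma(u,v)$ that is uniform in $(u,v)$, and then a mollifier/Fourier argument that feeds the spectral gap of Theorem~\ref{thm:Linftybound} on the critical line. The identification $\sum_{\b\in\A^n}p_\b|\varphi_{u\b}'(x_v)|^{i\xi}=\cL_{i\xi}^n g_u(x_v)$ with $g_u=|\varphi_u'|^{i\xi}$, together with the bounded--distortion control of $\|g_u\|_{|\xi|}$ uniformly in $u$, is exactly the right observation.

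There is, however, a genuine gap in your third paragraph. You assert that passing from Bernoulli weights $p_\b$ on $\A^n$ to uniform weights on $\cR_n(\eps,\eps_0)$ ``costs only factors $e^{O(\eps n)}$ and an additive error $\lesssim e^{-\delta n}$''. For a \emph{non-negative} summand this is correct, but you are applying it to the oscillating sum $\sharp\cR_n^{-1}\sum_{\b\in\cR_n}|\varphi_{u\b}'(x_v)|^{i\xi}$. The implicit multipliers $w_\b:=(\sharp\cR_n\,p_\b)^{-1}$ vary with $\b$ inside $[e^{-2\eps n},e^{2\eps n}]$, and there is no inequality of the form $|\sum_\b w_\b p_\b e^{i\theta_\b}|\le e^{O(\eps n)}|\sum_\b p_\b e^{i\theta_\b}|+e^{-\delta n}$: the $w_\b$ can undo whatever phase cancellation the transfer operator produces. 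Moreover, even if one accepted an additive $e^{-\delta n}$ remainder, after integrating over $|\xi|\le(R\sigma)^{-1}$ and multiplying by $\sigma^{-\eps_0/4}$ it would require $\delta\gtrsim\eps_0^2$, which fails for the small $\eps$ one must eventually choose.

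The paper avoids this by using positivity \emph{before} any Fourier expansion. It first reduces (via Cauchy--Schwarz) to a single--word count and a $C^2$ mollifier $h\ge\chi_J$, obtaining $\sharp\{\d:\ldots\}^2\le\big(\sum_\d p_\d\,h\big)\big(\sum_\d 1/p_\d\big)$; since $h\ge 0$ one then has $\sum_{\d\in\tilde\cR}p_\d h\le\sum_{\d\in\A^m}p_\d h$ by monotonicity, and it is only this last non-negative sum that is Fourier--inverted to produce $\cL_{ib}^m$. Your route is easily repaired in the same spirit: bound $E_\sigma(u,v)\le e^{O(\eps n)}\sum_{\b,\c\in\cR_n}p_\b p_\c\,\chi\le e^{O(\eps n)}\sum_{\b,\c\in\A^n}p_\b p_\c\,\psi$ (both steps use only $\chi,\psi\ge 0$), and only then expand $\psi$ via its compactly supported Fourier transform; the inner sum is now exactly $|\cL_{i\xi}^n g_u(x_v)|^2$ with no large--deviation remainder.

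A secondary difference worth flagging: the paper does not iterate the transfer operator to length $n$. It chooses $m$ with $\sigma\sim e^{-\eps_0 m}$, splits the concatenation $\a\b=\e\d$ with $|\d|=m$, and applies the spectral gap to $\cL^m$; the mollifier then has $\|h\|_1\sim|J|$ and $\|h''\|_1\sim|J|^{-1}$ with $|J|\sim R\sigma$. Once the weight issue above is fixed your length--$n$ variant should also close, but the $m$--adapted version keeps the numerics cleaner and makes the final balancing of $\rho_0^{m}$ against $\sigma$ transparent.
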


\begin{proof}
	We will split our proof into four steps.

\bigskip	
\noindent \textbf{Step 1. Deriving our proposition from \eqref{eq:triple}.} 
The main estimate we will need for the proof of this proposition is the following:
 
\medskip 
\textit{There exists $\kappa_0 > 0$ such that for all $\eps > 0$, $n \in \N$, $\eta \in J_n(\eps,\eps_0)$, $\sigma \in [R^{-2}|\eta|^{-1},|\eta|^{-\eps_1}]$, $x \in I$ we have
\begin{align}\label{eq:triple}\sharp\{(\a,\b,\c) \in \cR_n(\eps,\eps_0)^3 : |e^{2\lambda n}|\varphi_{{\bf ab}}'(x)|-e^{2\lambda n}|\varphi_{{\bf ac}}'(x)|| \leq \sigma \} \lesssim e^{\kappa_0 \eps n} \sigma^{\eps_0/3} \sharp \cR_n(\eps,\eps_0)^3.\end{align}}

\medskip
Indeed assuming \eqref{eq:triple} now holds, we can now conclude our proposition as follows. If $\eta \in J_n(\eps,\eps_0)$ and $\sigma \in  [R^{-2}|\eta|^{-1},|\eta|^{-\eps_1}]$, there is a unique $l\geq \lfloor \frac{\epsilon_{0}\epsilon_1 n}{2\log 2}\rfloor -1$ such that 
$$2^{-l-1}\leq \sigma \leq 2^{-l}.$$ 
For such an $l$, let $\mathcal{R}_l^*$ be the collection of pairs $(\a,\d) \in \mathcal{R}_n(\eps,\eps_0)^2$ such that
	$$ \sharp \cR_n(\eps,\eps_0)^{-2}\cdot  \sharp \{ (\b,\c) \in \cR_n(\eps,\eps_0)^2 : ||\varphi'_{\a\b}(x_{\d})|-|\varphi_{\a\c}'(x_{\d})||\leq e^{-2\lambda n} 2^{-l} \} \leq 2^{-(l+1)\eps_0/4}. $$
	 Using this terminology, if we have a block $\a$ such that $(\a_{j-1},\a_j)\in \mathcal{R}_l^*$ for every $j=1,...,k$ and every $l\geq \lfloor \frac{\epsilon_{0}\epsilon_1 n}{2\log 2}\rfloor -1$, then by the definition of $\mathcal{R}_l^*$ and by definition of $\zeta_{j,{\a}}(\b)$ we have that
	\begin{align*}
	& \sharp \cR_n(\eps,\eps_0)^{-2}\cdot\sharp \{ (\b,\c) \in \mathcal{R}_n(\eps,\eps_0)^2: |\zeta_{j,{\A}}({\b})-\zeta_{j,{\A}}({\c})| \leq \sigma \} \\
	&  \leq \sharp \cR_n(\eps,\eps_0)^{-2} \cdot\sharp \{ (\b,\c) \in \mathcal{R}_n(\eps,\eps_0)^2: ||\varphi_{\a_{j-1}\b}'(x_{\a_j})|-|\varphi_{\a_{j-1}\c}'(x_{\a_j})|| \leq e^{-2\lambda n} 2^{-l} \} \\
	& \leq 2^{-(l+1)\eps_0/4} \\
	& \leq \sigma^{\eps_0/4}
	\end{align*}
for all $\sigma \in [R^{-2}|\eta|^{-1},|\eta|^{-\epsilon_1}]$ and $\eta\in J_{n}(\epsilon,\epsilon_0)$. Thus we have the inclusion $$\left\{ \a\in \mathcal{R}_n^{k+1}(\eps,\eps_0) : (\a_{j-1},\a_j) \in \mathcal{R}_l^* \textrm{ for all }l\geq \left\lfloor \frac{\epsilon_{0}\epsilon_1 n}{2\log 2}\right\rfloor -1,j=1,\ldots k\right\} \subset \mathcal{W}.$$ So a block $\a \notin \mathcal{W}$ if there exists at least one $j$  and $l$ such that $(\a_{j-1},\a_j)\notin \mathcal{R}_l^*$. On the other hand, using Markov's inequality for $$f(\a,\d)= \sharp \cR_n(\eps,\eps_0)^{-2} \cdot \sharp \{ (\b,\c) \in \cR_n(\eps,\eps_0)^2 : ||\varphi'_{\a\b}(x_{\d})| - |\varphi'_{\a\c}(x_{\d})||\leq e^{-2\lambda n}2^{-l} \}$$ gives us 
	\begin{align*}
	 \sharp\{ \mathcal{R}_n(\eps,\eps_0)^2 \setminus \mathcal{R}_l^* \} & = \sharp\{ (\a,\d)\in \mathcal{R}_n(\eps,\eps_0)^2 : f(\a,\d) \geq 2^{-(l+1)\eps_0/4} \}\\&\leq 2^{(l+1)\eps_0/4}\sum_{(\a,\d) \in \mathcal{R}_n(\eps,\eps_0)^2} f(\a,\d)\\
	& \lesssim \frac{2^{(l+1)\eps_0/4}\sharp\{ (\a,\b,\c,\d) \in \cR_n(\eps,\eps_0)^4: ||\varphi'_{\a\b}(x_{\d})|-|\varphi'_{\a\c}(x_{\d})|| \leq e^{-2\lambda n} 2^{-l} \}}{\sharp \cR_n(\eps,\eps_0)^{2}}
	\end{align*}
Applying now \eqref{eq:triple} with $\sigma = 2^{-l}$ gives us
\begin{align*}&\frac{2^{(l+1)\eps_0/4}\sharp\{ (\a,\b,\c,\d) \in \cR_n(\eps,\eps_0)^4: ||\varphi'_{\a\b}(x_{\d})|-|\varphi'_{\a\c}(x_{\d})|| \leq e^{-2\lambda n} 2^{-l} \}}{\sharp \cR_n(\eps,\eps_0)^{2}} \\
& \lesssim e^{\eps \kappa_0 n} 2^{-l \eps_0/12}\sharp \cR_n(\eps,\eps_0)^{2}.
\end{align*}
This in turn implies
\begin{equation}
	\label{e:final count}
	\sharp( \mathcal{R}_n(\eps,\epsilon_0)^2\setminus \mathcal{R}^*_l ) \lesssim e^{\eps \kappa_0 n} 2^{-l \eps_0/12}\cdot\sharp \cR_n(\eps,\eps_0)^{2}.
\end{equation}
Applying \eqref{e:final count} we now have
\begin{align*}
	\sharp(\mathcal{R}_n^{k+1}(\eps,\eps_0)\setminus \mathcal{W})&\leq \sum_{j=1}^{k}\sum_{l=\lfloor \frac{\epsilon_{0}\epsilon_1 n}{2\log 2}\rfloor -1}^{\infty}\sharp\{\a\in\mathcal{R}_n^{k+1}(\eps,\eps_0): (\a_{j-1},\a_{j})\notin \cR_{l}^* \}\\
	&=\sum_{j=1}^{k}\sum_{l=\lfloor \frac{\epsilon_{0}\epsilon_1 n}{2\log 2}\rfloor -1}^{\infty}\sharp \mathcal{R}_n^{k-1}(\epsilon,\epsilon_0)\cdot \sharp( \mathcal{R}_n(\eps)^2\setminus \mathcal{R}^*_l )\\
	&\leq 	 \sharp \mathcal{R}_n^{k+1}(\epsilon,\epsilon_0)\sum_{j=1}^{k}\sum_{l=\lfloor \frac{\epsilon_{0}\epsilon_1 n}{2\log 2}\rfloor -1}^{\infty}e^{\eps \kappa_0 n} 2^{-l \eps_0/12}\\
	&\lesssim \sharp \mathcal{R}_n^{k+1}(\eps,\eps_0)\cdot k  e^{\epsilon\kappa_{0}n-\epsilon_{0}^2\epsilon_{1}n/24}.
\end{align*} 
Thus our desired bound holds.

\bigskip
\noindent \textbf{Step 2. Reducing the proof of \eqref{eq:triple} to establishing \eqref{eq:triple2}.}
Instead of proving \eqref{eq:triple} directly we will instead prove the following statement:

\medskip 
\textit{There exists $\kappa_0 > 0$ such that for all $\eps > 0$, $n \in \N$, $\eta \in J_n(\eps,\eps_0)$, $\sigma \in [(R|\eta|^{-1})^{1-\frac{\lfloor \epsilon_{0}n\rfloor}{n}},|\eta|^{-\eps_1}]$, $x \in I$ we have
	\begin{align}\label{eq:triple2}\sharp\{(\a,\b,\c) \in \cR_n(\eps,\eps_0)^3 : |e^{2\lambda n}|\varphi_{{\bf ab}}'(x)|-e^{2\lambda n}|\varphi_{{\bf ac}}'(x)|| \leq \sigma \} \lesssim e^{\kappa_0 \eps n} \sigma^{\eps_0/2} \sharp \cR_n(\eps,\eps_0)^3.\end{align}}
The difference between \eqref{eq:triple2} and \eqref{eq:triple} is that we consider $\sigma \in [(R|\eta|^{-1})^{1-\frac{\lfloor \epsilon_{0}n\rfloor}{n}},|\eta|^{-\eps_1}]$ instead of $\sigma \in  [R^{-2}|\eta|^{-1},|\eta|^{-\eps_1}].$ It can be shown that \eqref{eq:triple} follows from \eqref{eq:triple2}, albeit for a potentially different value of $\kappa_0$. We leave the details to the interested reader. We emphasise that in \eqref{eq:triple2} we observe a $\sigma^{\eps_0/2}$ term in our upper bound, whereas in \eqref{eq:triple} we observe a $\sigma^{\eps_0/3}$ term in our upper bound. This difference in the exponents is crucial when it comes to deriving \eqref{eq:triple} from \eqref{eq:triple2}. To complete the proof of our proposition it now suffices to prove \eqref{eq:triple2}.

\bigskip
\noindent \textbf{Step 3. Reducing the proof of \eqref{eq:triple2} to establishing \eqref{eq:scalem}.}
Let us fix $\epsilon,n,\eta$ and $\sigma$ as in its statement of \eqref{eq:triple2}. We fix $m \in \N$ such that 
$$e^{-\eps_0 (m-1)}< \sigma \leq e^{-\eps_0 m}.$$
Note that since $\sigma \in [(R|\eta|^{-1})^{1-\frac{\lfloor \epsilon_{0}n\rfloor}{n}},|\eta|^{-\eps_1}]$ and $\eta\in J_{n}(\epsilon,\epsilon_0)$, we have $$e^{-\epsilon_0(n-\lfloor \epsilon_0 n\rfloor)}\leq \sigma \leq e^{-\frac{\epsilon_0\epsilon_1n}{2}}$$ therefore
$$\frac{\eps_1}{2} n \leq m \leq n-\lfloor \epsilon_0 n\rfloor.$$
Appealing to a bounded distortions argument and the fact that $p_{\a}p_{\b}=\p_{\a\b}$ for any $\a,\b\in \A^{*}$, we can deduce that there exists a constant $\beta>0$ such that for every pair $(\a,\b) \in \cR_n(\eps,\eps_0)^2$, the concatenation $\a\b$ splits into a word $\a\b = \e \d$ with $\e := \a\b|_{2n-m} \in \cR_{2n-m}(\eps,\eps_0)$ and 
$$\d := \a\b|_{2n-m+1}^{2n} \in \tilde\cR := \Big\{\d \in \A^m :  [\d] \subset A_m(\beta \eps) \Big\}.$$
Thus if we now write
$$\cP = \{(\e,y) : y\pm e^{-\eps_0 m} \in [R^{-1},R]\quad \text{and}\quad \e \in \tilde \cR_{2n-m}(\eps) \}$$
we have:
\begin{align*}& \sharp\{(\a,\b,\c) \in \cR_n(\eps,\eps_0)^3 : |e^{2\lambda n}|\varphi_{{\bf ab}}'(x)|-e^{2\lambda n}|\varphi_{{\bf ac}}'(x)|| \leq \sigma \} \\
& \leq \sharp \cR_n(\eps,\eps_0) \sup_{\c \in \cR_n(\eps,\eps_0)} \sharp\{(\a,\b) \in \cR_n(\eps,\eps_0)^2 : |e^{2\lambda n}|\varphi_{{\bf ab}}'(x)|-e^{2\lambda n}|\varphi_{{\bf ac}}'(x)|| \leq \sigma \} \\
& \leq \sharp \cR_n(\eps,\eps_0) \sup_{\c \in \cR_n(\eps,\eps_0)} \sharp\{(\a,\b) \in \cR_n(\eps,\eps_0)^2 : |e^{2\lambda n}|\varphi_{{\bf ab}}'(x)|-e^{2\lambda n}|\varphi_{{\bf ac}}'(x)|| \leq e^{-\eps_0 m} \} \\
& \leq \sharp \cR_n(\eps,\eps_0) \sharp \cR_{2n-m}(\eps,\eps_0)  \sup_{(\e,y)  \in \cP}\sharp\{\d \in \tilde\cR : e^{2\lambda n}|\varphi_{{\bf e d}}'(x)| \in B(y,e^{-\eps_0 m})\}.
\end{align*}
Since $e^{-\eps_0 m} \sim \sigma$ and 
$$\sharp \tilde\cR \sim e^{\beta' \eps m} \sharp\cR_m(\eps,\eps_0)$$
for some $\beta' > 0$ depending on $\beta$ by \eqref{e:cards} and the definition of $A_{m}(\beta \eps)$, \eqref{eq:triple2} will follow if we can establish the following: 

\bigskip
\textit{There exists $\kappa_0 > 0$ such that for any $y \in \R$ with $y\pm e^{-\eps_0 m} \in [R^{-1},R]$, $\e \in \cR_{2n-m}(\eps,\eps_0)$ and $x \in I$ we have
\begin{align} \label{eq:scalem}\sharp\{\d \in \tilde\cR : e^{2\lambda n}|\varphi_{{\bf e d}}'(x)| \in B(y,e^{-\eps_0 m})\} \lesssim  e^{\eps \kappa_0 m} \sigma^{\eps_0/2} \sharp \tilde\cR.\end{align}}

\bigskip
\noindent \textbf{Step 4. Verifying \eqref{eq:scalem}.} Let us now proceed to prove \eqref{eq:scalem}.
As $y-e^{-\eps_0 m} \geq R^{-1} > 0$, we have 
$$e^{2\lambda n}|\varphi_{{\e \d}}'(x)| \in B(y,e^{-\eps_0 m})$$
 if and only if 
$$-\log |\varphi_{\e\d}'(x)| \in J := [2 \lambda n - \log(y+e^{-\eps_0 m}),2\lambda n - \log(y-e^{-\eps_0 m})].$$ 
so
$$ \sharp\{\d \in \tilde\cR: e^{2\lambda n}|\varphi_{{\e \d}}'(x)| \in B(y,e^{-\eps_0 m})\} =  \sharp\{\d \in \tilde\cR: -\log |\varphi_{\e\d}'(x)| \in J\}$$

We will bound this using a mollifier $h \in C^2(\R)$ satisfying
$$\chi_J \leq h, \quad \|h\|_1 \lesssim |J|, \quad \|h''\|_{L^1} \lesssim \frac{1}{|J|}.$$
As $\chi_J \leq h$, we have
$$ \sharp\{\d \in \tilde\cR :  -\log |\varphi_{\e\d}'(x)| \in J\}\leq \sum_{\d \in \tilde\cR }h(-\log |\varphi_{\e\d}'(x)|)^{1/2}$$
The Cauchy-Schwartz inequality implies then implies the bound
	\begin{align*}\sharp\{\d \in \tilde\cR :  -\log |\varphi_{\e\d}'(x)| \in J\}^2\leq  \Big(\sum_{\d \in \tilde\cR } p_\d h(-\log |\varphi_{\e\d}'(x)|)\Big)^2 \Big(\sum_{\d \in \tilde\cR}\frac{1}{p_\d }\Big)^2. \end{align*}
By Fourier inversion we know that
\begin{align*}
h(-\log |\varphi_{\e\d}'(x)|) = \int \exp(-2\pi i \xi \log |\varphi_{\e\d}'(x)|) \hat{h}(\xi)\,d\xi =  \int |\varphi_{\e\d}'(x)|^{-2\pi i \xi } \hat{h}(\xi)\,d\xi.
\end{align*}
Moreover, because the words in $\tilde \cR$ are all of length $m$, we have the bound:
\begin{align*} \sum_{\d \in \tilde \cR} p_\d  h(-\log |\varphi_{\e\d}'(x)|) &\leq \sum_{\d \in \A^m} p_\d  h(-\log |f_{\e\d}'(x)|) \\
&= \sum_{\d \in \A^m} p_\d  \int \hat{h}(\xi)|\varphi_{\e\d}'(x)|^{-2\pi i \xi } \,d\xi\\
&=\int \hat{h}(\xi)\sum_{\d \in \A^m} p_\d  |\varphi_{\e\d}'(x)|^{-2\pi i \xi } \,d\xi.
\end{align*}
Let $\Theta>1$ be such that Theorem \ref{thm:Linftybound} is satisfied for $r=0$ and $|b|>\Theta$. We now bound the integral above over the domains $|\xi| \leq \Theta/2\pi$ and  $|\xi| > \Theta/2\pi$.

Firstly, we have
\begin{align*} & \int_{|\xi| \leq \Theta / 2\pi} \hat{h}(\xi)\sum_{\d \in \A^m} p_\d  |\varphi_{\e\d}'(x)|^{-2\pi i \xi } \,d\xi \\
& \lesssim \sup_{|\xi|\leq \Theta/2\pi} \Big|\hat{h}(\xi)\sum_{\d \in \A^m} p_\d   |\varphi_{\e\d}'(x)|^{-2\pi i \xi } \Big|\\
& \leq \sup_{|\xi|\leq 1/2\pi} |\hat{h}(\xi)|\sum_{\d \in \A^m} p_\d  \\
& \leq |J| .
\end{align*}

Next, for the integral over $|\xi| > \Theta/2\pi$, for given such $\xi$, define the function
$$|g(x)| := |\varphi_\e'(x)|^{-2\pi i\xi}$$
and take $b := -2\pi \xi.$ Notice that the definition of the transfer operator gives that for any $x\in I$ we have the identity:
$$\sum_{\d \in \A^m} p_\d  |\varphi_{\e\d}'(x)|^{-2\pi i \xi } = \cL_{0+ib}^m (g)(x).$$
On the other hand, by Theorem \ref{thm:Linftybound}, there exists $\rho_0 > 0$ such that for all $m \in \N$ we have:
$$ \|\cL_{b}^m (g)\|_\infty \lesssim \rho_0^{m} |b|^{1/2} \|g\|_b.$$
By bounded distortions
$$|g(x)|= 1 \quad \text{and} \quad |g'(x)| = 2 \pi |\xi|  \frac{|\phi_\e''(x)|}{|\phi_\e'(x)|} |g(x)| \lesssim |\xi|,$$
so the $b$-norm is bounded:
$$ \|g\|_{b} = \|g\|_\infty + \frac{\|g'\|_{\infty}}{|b|} \lesssim 1.$$ 
Hence we can bound the integral over $|\xi| > \Theta/2\pi$ as follows:
\begin{align*}
&\int\limits_{|\xi|>\Theta/2\pi}  \hat{h}(\xi)\sum_{\d \in \A^m} p_\d  |\varphi_{\e\d}'(x)|^{-2\pi i \xi } \,d\xi \\
& \lesssim \int\limits_{|\xi|>\Theta/2\pi} |\hat{h}(\xi)| \cdot\rho_0^m |b|^{1/2}  \|g\|_{b} \,d\xi \\
& \lesssim \rho_0^m \int\limits_{|\xi|>\Theta/2\pi} |\hat{h}(\xi)| \cdot |\xi|^{1/2}\,d\xi.
\end{align*}
On the other hand, by integration by parts, we can bound $\hat{h}(\xi)$ for any $\xi \in \R$ as follows:
\[|\hat{h}(\xi)|\leq \frac{1}{1+|2\pi \xi|^2}(\|h\|_{L^1}+\|h''\|_{L^1}) .\]
Thus
\[\int_{|\xi|>\Theta / 2\pi} |\hat{h}(\xi)| \cdot |\xi|^{1/2}\,d\xi\leq \int \frac{|\xi|^{1/2}}{1+|2\pi \xi|^2}(\|h\|_{L^1}+\|h''\|_{L^1}) \,d\xi \lesssim \|h\|_{L^1}+\|h''\|_{L^1}. \]

Combining our bounds for the two integrals, we arrive at:
\begin{align*}
\sharp\{\d \in \tilde\cR: e^{2\lambda n}|\varphi_{{\e \d}}'(x) |\in B(y,e^{-\eps_0 m})\}^2 \lesssim  E(x) [\rho_0^m (\|h\|_{L^1}+\|h''\|_{L^1})+ |J|],
\end{align*}
where
$$E(x) := \sum_{\d \in \tilde\cR }\frac{1}{ p_\d }.$$
By definition of $\tilde \cR$, we have
$$E(x) \lesssim e^{\eps\beta m}  e^{h m} \sharp \tilde \cR.$$
So for some $\kappa > 0$
$$E(x) \lesssim e^{\eps \kappa n} \sharp \tilde \cR^2$$
Moreover, by the mean value theorem 
$$|J| \leq \frac{2e^{-\eps_0m}}{y-e^{-\eps_0 m}} \leq 2R e^{-\eps_0 m} \quad \text{and} \quad \frac{1}{|J|^3} \leq \frac{1}{8} R^{3} e^{3\eps_0 m}.$$
Recalling $R = e^{ \eps n}$ gives
$$ |J| \lesssim e^{ \eps n} e^{-\eps_0 m}$$
and
$$\frac{1}{|J|} \lesssim  e^{3 \eps n} e^{\eps_0 m}.$$
Then by the choice of $h$, we have
$$\|h\|_{L^1}+\|h''\|_{L^1} \leq |J| + \frac{1}{|J|} \lesssim e^{\eps n} e^{-\eps_0 m} + e^{3 \eps n} e^{\eps_0 m}.$$
Thus we obtain
\begin{align*}
\sharp\{\d \in \tilde\cR: e^{2\lambda n}|\phi_{{\e \d}}'(x)| \in B(y,e^{-\eps_0 m})\}^2 & \lesssim e^{\eps \kappa n} \sharp \tilde \cR^2 [ \rho_0^m  (e^{\eps n} e^{-\eps_0 m} + e^{3 \eps n} e^{\eps_0 m}) + e^{ \eps n} e^{-\eps_0 m}]\\
& \lesssim e^{\eps(3+ \kappa) n} \sharp \tilde \cR^2  \rho_0^m  e^{\eps_0 m}
\end{align*}
Since $\rho_0 \leq e^{-2\eps_0}$ by the choice of $\eps_0$ and $e^{-\eps_0 (m-1)}\leq \sigma,$ we see that 
$$\rho_0^m e^{\eps_0 m} \leq e^{-2\eps_0 m}e^{\eps_0 m} = e^{-\eps_0 m} \lesssim \sigma^{\eps_0}.$$
Selecting now $\kappa_0 = \frac{(3 + \kappa)}{\epsilon_1}$ and using that $n\leq \frac{2m}{\eps_1}$ gives us
$$\sharp\{\d \in \tilde\cR: e^{2\lambda n}|\phi_{{\e \d}}'(x)| \in B(y,e^{-\eps_0 m})\} \lesssim e^{\eps \kappa_0 m} \sigma^{\eps_0/2} \sharp \tilde \cR. $$
Thus the proof of \eqref{eq:scalem} is complete. 

This completes the proof of Proposition \ref{lma:distribution}.
\end{proof}

Combining Theorem \ref{lma:discretised} and Proposition \ref{lma:distribution}, we can now prove Theorem \ref{thm:main1}:

\begin{proof}[Proof of Theorem \ref{thm:main1}] Recall the data and notations we fixed in \ref{rmk:data}. Iterating the self-conformality 
$$\mu=\sum_{a\in \A}p_{a}\varphi_{a}\mu$$
yields using the notation $e(y) := \exp(-2\pi i y)$, $y \in \R$ that
$$\widehat{\mu}(\xi)= \sum_{\a\ast \b\in \A^{(2k+1)n}}p_{\a\ast \b}\int e(\xi\varphi_{\a\ast \b}(x))d\mu(x).$$ Here we have used the notation $\a\ast \b$ to mean $(a_0,b_1,a_1,b_2,\ldots,b_k,a_k)$ for $\a=(a_0,\ldots,a_k)\in \A^{(k+1)n}$ and $\b=(b_1,\ldots,b_k)\in \A^{kn}$. Now splitting this sum based upon whether a word is in $\mathcal{R}_{n}^{2k+1}(\epsilon)$ or not and using Lemma \ref{e:large deviation}, we have
$$|\widehat{\mu}(\xi)|\lesssim \left| \sum_{\a\in\mathcal{R}_n^{k+1}(\eps,\eps_0)}\sum_{\b\in\mathcal{R}_n^k(\eps,\eps_0)}p_{\a\ast \b}\int e(\xi\varphi_{\a\ast \b}(x))d\mu(x)\right|+ke^{-\delta n/2}.$$ Now taking squares and using the inequality $|a+b|^2\leq 2|a|^2+2|b|^2$ for $a,b\in \C$ yields 
\begin{equation}
	\label{e:Fourier splitting}
	|\widehat{\mu}(\xi)|^2\lesssim \left| \sum_{\a\in\mathcal{R}_n^{k+1}(\eps,\eps_0)}\sum_{\b\in\mathcal{R}_n^k(\eps,\eps_0)}p_{\a\ast \b}\int e(\xi\varphi_{\a\ast \b}(x))d\mu(x)\right|^2 +k^2e^{-\delta n}.
\end{equation} Focusing now on the first term, by the Cauchy-Schwartz inequality we have 
\begin{align*}
	&\left| \sum_{\a\in\mathcal{R}_n^{k+1}(\eps,\eps_0)}\sum_{\b\in\mathcal{R}_n^k(\eps,\eps_0)}p_{\a\ast \b}\int e(\xi\varphi_{\a\ast \b}(x))d\mu(x)\right|^2\\
	\leq& \left|\sum_{\a\in\mathcal{R}_n^{k+1}(\eps,\eps_0)}\sum_{\b\in\mathcal{R}_n^k(\eps,\eps_0)}p_{\a\ast \b}^2\right|\times \sum_{\a\in\mathcal{R}_n^{k+1}(\eps,\eps_0)}\sum_{\b\in\mathcal{R}_n^k(\eps,\eps_0)}\left|\int e(\xi\varphi_{\a\ast \b}(x))d\mu(x)\right|^2\\
\lesssim&  e^{(2k+1)\eps n}e^{-(2k+1)hn}\times \sum_{\a\in\mathcal{R}_n^{k+1}(\eps,\eps_0)}\sum_{\b\in\mathcal{R}_n^k(\eps,\eps_0)}\left|\int e(\xi\varphi_{\a\ast \b}(x))d\mu(x)\right|^2
\end{align*}
In the final line we've used that if $\a\in\mathcal{R}_n^{k+1}(\eps,\eps_0)$ and  $\b\in\mathcal{R}_n^k(\eps,\eps_0)$ then $p_{\a\ast\b}\leq e^{(2k+1) \eps n}\cdot e^{-(2k+1)hn},$ and $$\sum_{\a\in\mathcal{R}_n^{k+1}(\eps,\eps_0)}\sum_{\b\in\mathcal{R}_n^k(\eps,\eps_0)}p_{\a\ast \b}= \sum_{\a\in\mathcal{R}_n^{k+1}(\eps,\eps_0)}p_{\a}\sum_{\b\in\mathcal{R}_n^k(\eps,\eps_0)}p_{ \b}\leq  1.$$ Substituting the above into \eqref{e:Fourier splitting} proves
that for all $\epsilon>0$ we have
$$|\widehat{\mu}(\xi)|^2\lesssim e^{(2k+1)n \eps}e^{-(2k+1)hn}\cdot \sum_{\a\in\mathcal{R}_n^{k+1}(\eps,\eps_0)}\sum_{\b\in\mathcal{R}_n^k(\eps,\eps_0)}\left|\int e(\xi\varphi_{\a\ast \b}(x))d\mu(x)\right|^2 +k^2e^{-\delta n}.$$

Next, we will reduce the upper bound to exponential sums. We will use the fundamental theorem of calculus to replace the term $\varphi_{\a\ast \b}(x)-\varphi_{\a\ast \b}(y)$ with a product of derivatives. We begin by observing that for a fixed $\a\in \cR_{n}^{k+1}(\epsilon,\epsilon_0)$ we have
 \begin{align*}
	\sum_{\b\in\mathcal{R}_n^k(\eps,\eps_0)}\left|\int e(\xi\varphi_{\a\ast \b}(x))d\mu(x)\right|^2&=\sum_{\b\in\mathcal{R}_n^k(\eps,\eps_0)}\int\int e(\xi(\varphi_{\a\ast \b}(x)-\varphi_{\a\ast \b}(y)))\,d\mu(x)d\mu(y)\\
	&\leq \int\int\left| \sum_{\b\in\mathcal{R}_n^k(\eps,\eps_0)}e(\xi(\varphi_{\a\ast \b}(x)-\varphi_{\a\ast \b}(y)))\right|\,d\mu(x)d\mu(y)
\end{align*}
We now focusing on the final term in the above. For any $x,y$ we let $$\eta(x,y):= \xi e^{-2k\lambda n}(\varphi_{\a_k}(x)-\varphi_{\a_k}(y)).$$ Then by the regularity of $\a_k$, we have $$e^{-\eps n}e^{\epsilon_0 n}|x-y|\leq |\eta(x,y)|\leq e^{\eps n}e^{\epsilon_0 n}|x-y|.$$ 
Appealing to the fundamental theorem of calculus, and duplicating the arguments from \cite{SS} we have 
$$\left|\xi(\varphi_{\a\ast \b}(x)-\varphi_{\a\ast \b}(y))-\eta(x,y)\zeta_{1,\a}(b_1)\cdots \zeta_{k,\a}(b_k))\right|\lesssim e^{2k}e^{(k+2)\eps n}e^{-\lambda n}e^{\epsilon_0n}.$$
This in turn implies that
\begin{align*}
&\left|\sum_{\b\in\mathcal{R}_n^k(\eps,\eps_0)} e(\xi(\varphi_{\a\ast \b}(x)-\varphi_{\a\ast \b}(y)))\right|\\
\lesssim & \left|\sum_{\b\in\mathcal{R}_n^k(\eps,\eps_0)} e(\eta(x,y)\zeta_{1,\a}(b_1)\cdots \zeta_{k,\a}(b_k))\right|+\sum_{\b\in\mathcal{R}_n^k(\eps,\eps_0)}e^{2k}e^{(k+2)\eps n}e^{-\lambda n}e^{\epsilon_0n}\\
\lesssim& \left|\sum_{\b\in\mathcal{R}_n^k(\eps,\eps_0)} e(\eta(x,y)\zeta_{1,\a}(b_1)\cdots \zeta_{k,\a}(b_k))\right|+e^{2k}e^{(2k + 2) \eps n}e^{hk n}e^{-\lambda n}e^{\epsilon_0n}.
\end{align*}In the final line we have used that $\sharp \mathcal{R}_n^k(\eps,\eps_0)\leq e^{\eps n k }e^{khn}$.
Thus 
\begin{align*}
|\widehat{\mu}(\xi)|^{2}\lesssim &e^{(2k+1)\eps n}e^{-(2k+1)hn}\cdot \sum_{\a\in\mathcal{R}_n^{k+1}(\eps,\eps_0)}\iint \left|\sum_{\b\in\mathcal{R}_n^k(\eps,\eps_0)} e(\eta(x,y)\zeta_{1,\a}(b_1)\cdots \zeta_{k,\a}(b_k))\right|d\mu(x)d\mu(y)\\
+\,&e^{(2k+1)\eps n}e^{-(2k+1)hn}\sum_{\a\in\mathcal{R}_n^{k+1}(\eps,\eps_0)}e^{2k}e^{(2k+2)\eps n}e^{hk n}e^{-\lambda n}e^{\epsilon_0n}\\
+\,& k^2e^{-\delta n/2}.
\end{align*}
Now using the inequality $\sharp \mathcal{R}_n^{k+1}(\eps,\eps_0)\leq e^{(k+1)\eps n}e^{(k+1)hn},$ we see that the above implies
\begin{align*}|\widehat{\mu}(\xi)|^{2}\lesssim& e^{ (2k+1)\eps n}e^{-(2k+1)hn}\cdot \sum_{\a\in\mathcal{R}_n^{k+1}(\eps,\eps_0)}\iint\left|\sum_{b\in\mathcal{R}_n^k(\eps,\eps_0)} e(\eta(x,y)\zeta_{1,\a}(b_1)\cdots \zeta_{k,\a}(b_k))\right|\,d\mu(x)d\mu(y)\\
+&e^{2k}e^{ (5k+3)\eps n}e^{-\lambda n}e^{\epsilon_0n}+ k^2e^{-\delta n/2}.
\end{align*}
As $\eps_0 \leq \lambda/4$, and $\eps > 0$ is chosen small enough depending on $k$, the latter two terms go to zero exponentially in $n$, which in turn means polynomially in $|\xi|$. It remains to estimate the term with the integrals above.

By the self-conformality of $\mu$, and using the assumption that our IFS is non-trivial, we know that there exists $\kappa>0$ such that 
\begin{equation}
	\label{e:FengLaubound}
	\mu(B(x,r))\lesssim r^{\kappa}.
\end{equation}
 for any $x\in\mathbb{R}$ and $r>0$. The proof of this follows by adapting the proof of Feng and Lau \cite{FL} on overlapping self-similar measures. Using \eqref{e:FengLaubound} it follows that $$\mu\times \mu(\{(x,y):|x-y|\leq e^{\eps n}e^{-\epsilon_0n/2}\})\lesssim e^{\eps n \kappa}e^{-\epsilon_o\kappa n/2}.$$ Using this bound we have
\begin{align*}
&e^{\eps n (2k+1)}e^{-(2k+1)hn} \sum_{\a\in\mathcal{R}_n^{k+1}(\eps,\eps_0)}\iint \left|\sum_{\b\in\mathcal{R}_n^k(\eps,\eps_0)} e(\eta(x,y)\zeta_{1,\a}(b_1)\cdots \zeta_{k,\a}(b_k))\right|d\mu(x)d\mu_(y)\\
\leq &e^{\eps n (2k+1)}e^{-(2k+1)hn} \sum_{\a\in\mathcal{R}_n^{k+1}(\eps,\eps_0)}\iint\limits_{|x-y|\geq e^{\eps n}e^{-\epsilon_{0}n/2}} \left|\sum_{\b\in\mathcal{R}_n^k(\eps,\eps_0)} e(\eta(x,y)\zeta_{1,\a}(b_1)\cdots \zeta_{k,\a}(b_k))\right|d\mu(x)d\mu(y)\\
&+ e^{\eps n (2k+1)}e^{-(2k+1)hn} \sum_{\a\in\mathcal{R}_n^{k+1}(\eps,\eps_0)}\sum_{\b\in\mathcal{R}_n^k(\eps,\eps_0)}e^{\eps n \kappa}e^{-\epsilon_o\kappa n/2},
\end{align*}
which is bounded by
\begin{align*}
&e^{\eps n (2k+1)}e^{-(2k+1)hn} \sum_{\a\in\mathcal{R}_n^{k+1}(\eps,\eps_0)}\iint\limits_{|x-y|\geq e^{\eps n}e^{-\epsilon_{0}n/2}} \left|\sum_{\b\in\mathcal{R}_n^k(\eps,\eps_0)} e(\eta(x,y)\zeta_{1,\a}(b_1)\cdots \zeta_{k,\a}(b_k))\right|d\mu(x)d\mu(y)\\
&+ e^{\eps n (4k+2)}e^{\eps n \kappa}\epsilon^{-\epsilon_{0}\kappa n/2}.
\end{align*}
Here as $\eps > 0$ can be chosen small enough in terms of $k$ and $\kappa$, the second term here goes to zero exponentially in $n$, so polynomially in $|\xi|$. Thus we can just focus on bounding the integral term above.

If a pair of points $x$ and $y$ satisfies $|x-y|\geq e^{\eps n}e^{-\epsilon_{0}n/2}$, we have $\eta(x,y)\geq e^{\epsilon_0n/2}$ so $\eta(x,y) \in J_n(\eps,\eps_0)$. Hence
\begin{align*}
& e^{\eps n (2k+1)}e^{-(2k+1)hn} \sum_{\a\in\mathcal{R}_n^{k+1}(\eps,\eps_0)}\iint\limits_{|x-y|\geq e^{\eps n}e^{-\epsilon_{0}n/2}} \left|\sum_{\b\in\mathcal{R}_n^k(\eps,\eps_0)} e(\eta(x,y)\zeta_{1,\a}(b_1)\cdots \zeta_{k,\a}(b_k))\right|d\mu(x)d\mu(y)\\
&\qquad \leq  e^{\eps n (2k+1)}e^{-(2k+1)hn} \sum_{\a\in\mathcal{R}_n^{k+1}(\eps,\eps_0)}\sup_{\eta \in J_n(\eps,\eps_0)}\left|\sum_{\b\in\mathcal{R}_n^k(\eps,\eps_0)} e(\eta\zeta_{1,\a}(b_1)\cdots \zeta_{k,\a}(b_k))\right|.
\end{align*} 

Recall that in Proposition \ref{lma:distribution} we defined $\cW$ to be the set of $(k+1)$-tuples $\a \in \mathcal{R}_n^{k+1}(\eps,\eps_0)$ such that for all $j=1,\dots,k$, $\eta \in J_n(\eps,\eps_0)$ and $\sigma \in [R^{-2}|\eta|^{-1},|\eta|^{-\eps_1}]$, we have that
	$$ \sharp \{ ({\bf b, c}) \in \mathcal{R}_n(\eps,\eps_0)^2: |\zeta_{j,{\a}}({\b})-\zeta_{j,{\a}}({\c})| \leq \sigma \}  \leq \sharp \cR_n(\eps,\eps_0)^2 \sigma^{\eps_0/4}. $$
By Proposition \ref{lma:distribution} there exists $\kappa_0>0$ such that
$$ \frac{\sharp(\mathcal{R}_n^{k+1}(\eps,\eps_0)\setminus \mathcal{W})}{\sharp \mathcal{R}_n^{k+1}(\eps,\eps_0)} \lesssim  k  e^{\epsilon\kappa_{0}n-\epsilon_{0}^2\epsilon_{1}n/24}$$
Using this bound for $\cW$ together with $\sharp \mathcal{R}_n^{k+1}(\eps,\eps_0)\lesssim e^{\epsilon(k+1)n}e^{(k+1)hn}$ and $\sharp \mathcal{R}_n^{k}(\eps,\eps_0)\lesssim e^{\eps kn}e^{khn},$  we can deduce the following bound
\begin{align*}
&e^{\eps n (2k+1)}e^{-(2k+1)hn} \sum_{\a\in\mathcal{R}_n^{k+1}(\eps,\eps_0)}\sup_{\eta \in J_n(\eps,\eps_0)}\left|\sum_{\b\in\mathcal{R}_n^k(\eps,\eps_0)} e(\eta\zeta_{1,\a}(b_1)\cdots \zeta_{k,\a}(b_k))\right|\\
&\lesssim e^{(3k+2) \eps n}e^{-kh n}  \max_{\a\in\cW}\sup_{\eta \in J_n(\eps,\eps_0)}\left|\sum_{\b\in\mathcal{R}_n^k(\eps,\eps_0)} e(\eta\zeta_{1,\a}(b_1)\cdots \zeta_{k,\a}(b_k))\right| +ke^{(4k+2+\kappa_0)\epsilon n}e^{-\eps_0^2\epsilon_1n/24} .
\end{align*}
The second term in the above decays to zero exponentially in $n$, and therefore polynomially in $|\xi|,$ for $\epsilon$ sufficiently small. It remains to bound the first term.

Recall now that
$$R := R_{\eps,n} =  e^{\eps n}$$
and
$$\cZ_j := \cR_n(\eps,\eps_0), \quad \text{for all } j = 1,\dots,k$$
and the maps $\zeta_j := \zeta_{j,\a} : \cR_n(\eps,\eps_0) \to [R^{-1},R]$ will be defined by
$$\zeta_{j,\a}(\b) := e^{2\lambda n} |\varphi_{\a_{j-1} \b}'(x_{\a_j})|, \quad \b \in \cR_n(\eps,\eps_0),$$
If we now fix $\eta \in J_n(\eps,\eps_0)$, $\a \in \cW$ and $\sigma \in [R^{-2}|\eta|^{-1},|\eta|^{-\eps_1}]$, then as
$$ \sharp \{ (\b,\c)\in \mathcal{R}_n(\eps,\eps_0)^2 : |\zeta_{j,\a}(\b) - \zeta_{j,\a}(\c)| \leq \sigma\} \leq \sharp \cR_n(\eps,\eps_0)^2 \sigma^{\eps_0/4}$$
we have by Theorem \ref{lma:discretised} that
$$ e^{(3k+2) \eps n}e^{-kh n}\max_{\a\in \cW}\Big| \sum\limits_{{\b \in \mathcal{R}_n^k(\eps,\eps_0)}} e(\eta \zeta_{1,{\a}}({\b_1})...\zeta_{k,{\a}}({\b}_k)) \Big| \lesssim e^{(4k+2) \eps n}|\eta|^{-\eps_2} \lesssim e^{(4k+2) \eps n}e^{-\eps_0 \eps_2 n/2}$$
since $|\eta| \geq e^{\eps_0 n/2}$ by  the definition of $J_n(\eps,\eps_0)$ as $\eta \in J_n(\eps,\eps_0)$. By making sure that $\eps > 0$ is chosen small enough, we have proven that $\widehat{\mu}(\xi)$ decays to zero polynomially in $|\xi|$. 
\end{proof}

\section{Fractal Uncertainty Principles from Fourier decay}\label{sec:fup}

Finally, we give the details of the proof of how the Fractal Uncertainty Principle follows from Fourier decay. The proof method here is adapted from the proof of the Fractal Uncertainty Principle for the Patterson-Sullivan measure \cite{BD1,Dyatlov} adapted to the measures we have with weaker regularity. Fix $\mu_j$ as $(C_j^-,\delta_j^-,C_j^+,\delta_j^+,h)$-Frostman measures, $j = 1,2$, and denote their supports by $K_1$ and $K_2$.

 Given $h > 0$, define the semiclassical Fourier transform of $f : \R^d \to \C$ by:
$$\cF_h f(\xi) := \frac{1}{(2\pi h)^{d/2}} \int_{\R^d} e^{-i x \cdot \xi / h} f(x) \, dx, \quad \xi \in \R^d.$$
Then FUP follows for $X = K_1 + B(0,h)$ and $Y = K_2 + B(0,h)$ if we can prove:
$$\|\1_X \cF_h^* \1_Y\|_{L^2(\R^d) \to L^2(\R^d)} \lesssim h^{\beta}.$$
Define
$$\Upsilon_X^\pm (x) =  \frac{1}{4h^{\delta_1^\pm}} \mu_1(B(x,2h)) \quad \text{and} \quad \Upsilon_Y^\pm(y) =  \frac{1}{4h^{\delta_2^\pm}} \mu_2(B(y,2h))$$
Then for all $x \in X$ we have
$$C_1^- \leq \Upsilon_X^-(x),  \Upsilon_X^+(x) \leq C_1^+,$$
and similarly for all $y \in Y$ we have
$$C_2^- \leq \Upsilon_Y^-(x),  \Upsilon_Y^+(x) \leq C_2^+$$
Indeed, for example if $x \in X$, then for some $x_0 \in K_1$ we have $|x-x_0| \leq h$. Thus $\mu_1(B(x,2h)) \geq \mu_1(B(x_0,h)) \geq C_1^- h^{\delta_1^-}$.

\begin{lemma}
Suppose for any bounded $u : \R^d\to \C$ we have
$$\|\sqrt{\Upsilon_X^-} \cF_h^* \Upsilon_Y^- u \|_{L^2(\R^d)} \lesssim h^{\beta} \|\sqrt{\Upsilon_Y^-} u\|_{L^2(\R^d)}.$$
Then
$$\|\1_X \cF_h^* \1_Y\|_{L^2(\R^d) \to L^2(\R^d)} \lesssim h^{\beta}.$$
\end{lemma}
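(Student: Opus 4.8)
The plan is to deduce the operator norm bound from the weighted estimate by inserting the weights $\Upsilon_X^\pm$ and $\Upsilon_Y^\pm$ as multipliers and using that they are bounded above and below on the relevant supports. First I would observe that $\1_X \leq \frac{1}{C_1^-}\Upsilon_X^-$ pointwise on $\R^d$: indeed $\Upsilon_X^-$ is nonnegative everywhere, and on $X$ we have $\Upsilon_X^-(x)\geq C_1^-$, while off $X$ the indicator vanishes. In particular $\sqrt{\1_X} \leq (C_1^-)^{-1/2}\sqrt{\Upsilon_X^-}$. Similarly $\1_Y \leq \frac{1}{C_2^-}\Upsilon_Y^-$, so $\sqrt{\1_Y}\leq (C_2^-)^{-1/2}\sqrt{\Upsilon_Y^-}$ pointwise.

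Next I would take an arbitrary $g \in L^2(\R^d)$, write $v = \1_Y g$, and estimate
$$\|\1_X \cF_h^*(\1_Y g)\|_{L^2} = \|\1_X \cF_h^*(\1_Y v)\|_{L^2}.$$
Set $u := \1_Y v / \Upsilon_Y^-$ where this makes sense (on $Y$, where $\Upsilon_Y^-\geq C_2^->0$) and $u=0$ otherwise; then $\Upsilon_Y^- u = \1_Y v$ and $u$ is bounded since $v\in L^2$ need not be bounded — so instead I would first reduce to bounded $u$ by density, or more cleanly apply the hypothesis to $u_N := \1_{\{|u|\leq N\}} u$ and pass to the limit by monotone/dominated convergence after the final inequality is in place. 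Applying the hypothesis to $u$ (via this truncation) gives
$$\|\sqrt{\Upsilon_X^-}\, \cF_h^*(\Upsilon_Y^- u)\|_{L^2} \lesssim h^\beta \|\sqrt{\Upsilon_Y^-}\, u\|_{L^2}.$$
Now $\Upsilon_Y^- u = \1_Y v$, so the left side is $\|\sqrt{\Upsilon_X^-}\,\cF_h^*(\1_Y v)\|_{L^2} \geq \sqrt{C_1^-}\,\|\1_X \cF_h^*(\1_Y v)\|_{L^2}$ using $\Upsilon_X^- \geq C_1^-$ on $X$ and nonnegativity to drop the rest. On the right side, $\sqrt{\Upsilon_Y^-}\,u = \1_Y v /\sqrt{\Upsilon_Y^-}$, and since $\Upsilon_Y^-\geq C_2^-$ on $Y$ we get $\|\sqrt{\Upsilon_Y^-}\,u\|_{L^2} \leq (C_2^-)^{-1/2}\|\1_Y v\|_{L^2} \leq (C_2^-)^{-1/2}\|g\|_{L^2}$. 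Combining,
$$\|\1_X \cF_h^*(\1_Y g)\|_{L^2} \leq \frac{1}{\sqrt{C_1^- C_2^-}}\,\|\sqrt{\Upsilon_X^-}\,\cF_h^*(\1_Y v)\|_{L^2}\cdot \frac{1}{\sqrt{C_1^-}}^{-1}\cdots \lesssim h^\beta \|g\|_{L^2},$$
with implied constant depending only on $C_1^-, C_2^-$. Taking the supremum over $\|g\|_{L^2}\leq 1$ yields $\|\1_X\cF_h^*\1_Y\|_{L^2\to L^2}\lesssim h^\beta$.

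I do not expect a serious obstacle here; the only point requiring mild care is the reduction from general $L^2$ functions to the bounded $u$ assumed in the hypothesis, which I would handle by a standard truncation-and-limit argument (the constant in the final bound is uniform in the truncation level $N$, so monotone convergence applies). One should also note that $\Upsilon_Y^-$ is supported in a neighbourhood of $K_2$ that is contained in $Y$ for $h$ small, so multiplying by $\1_Y$ versus not makes no difference, and likewise for $\Upsilon_X^-$ and $X$; this is where the definitions $X = K_1+B(0,h)$, $Y = K_2+B(0,h)$ and the ball $B(x,2h)$ in the definition of $\Upsilon$ are used consistently. The whole argument is a soft sandwiching and transfers verbatim the reduction from \cite{BD1}, with the only change being that we use the two-sided Frostman bounds on $\Upsilon^\pm$ rather than exact Ahlfors–David regularity.
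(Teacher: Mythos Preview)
Your argument is correct and follows essentially the same route as the paper: define $u=\1_Y f/\Upsilon_Y^-$, use the pointwise bounds $\1_X\le (C_1^-)^{-1}\Upsilon_X^-$ and $\1_Y\le (C_2^-)^{-1}\Upsilon_Y^-$ to sandwich, and reduce to bounded inputs by density/truncation (the paper restricts to bounded $f$ and invokes density, you truncate $u$---either works). One inessential slip: your final remark that $\operatorname{supp}\Upsilon_Y^-\subset Y$ is false since $\Upsilon_Y^-(y)>0$ exactly on $K_2+B(0,2h)\supsetneq Y=K_2+B(0,h)$, but you never actually use this claim, so the proof stands.
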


\begin{proof}
Let $f \in L^2(\R^d)$ be bounded. Write $u = \frac{f\1_Y}{\Upsilon_Y^-}$. Then $u$ is bounded as $\Upsilon_Y^-$ is bounded from below. Then by definition
$$\|\1_X \cF_h^* \1_Y f\|_{L^2(\R^d)} = \|\1_X \cF_h^* \Upsilon_Y^- u\|_{L^2(\R^d)}.$$
Moreover, as $C_1^- \lesssim \Upsilon_X^-$ we obtain
$$\|\1_X \cF_h(\Upsilon_Y^- u)\|_{L^2(\R^d)} \lesssim \|\Upsilon_X^- \cF_h \Upsilon_Y^- u\|_{L^2(\R^d)} \lesssim h^{\beta} \|\sqrt{\Upsilon_X^-} u\|_{L^2(\R^d)}$$
by the assumption. Finally, 
$$\|\sqrt{\Upsilon_Y^-} u\|_{L^2(\R^d)} = \|  f \1_Y \|_{L^2(\R^d)} \leq \|  f  \|_{L^2(\R^d)}.$$
Bounded $L^2(\R^d)$ functions generate $L^2(\R^d)$, so this implies the claim for all $L^2(\R^d)$ functions.
\end{proof}

We will now prove
\begin{prop}\label{prop:mainFUP}
For any bounded $u : \R^d\to \C$ we have
$$\|\sqrt{\Upsilon_X^-} \cF_h^*\Upsilon_Y^- u \|_{L^2(\R^d)} \lesssim h^{\beta} \|\sqrt{\Upsilon_Y^-} u\|_{L^2(\R^d)}.$$
\end{prop}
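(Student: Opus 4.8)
The plan is a $TT^{*}$ argument followed by Schur's test, with the decay of $\widehat{\mu_{2}}$ entering through the off-diagonal size of an explicit kernel. First I would recast the inequality: set $V u := \cF_{h}^{*}(\Upsilon_{Y}^{-}u)$ and view $V$ as an operator $L^{2}(\Upsilon_{Y}^{-}\,dx)\to L^{2}(\Upsilon_{X}^{-}\,dx)$; then Proposition \ref{prop:mainFUP} is exactly the bound $\|V\|\lesssim h^{\beta}$ with $\beta=\tfrac d2-\tfrac{\delta_{1}^{-}}2-\tfrac{\delta_{2}^{-}}2+\tfrac{\alpha}4$. Since $\cF_{h}$ is unitary on $L^{2}(dx)$ and multiplication by a real weight is self-adjoint, a short computation identifies the self-adjoint operator $VV^{*}$ on $L^{2}(\Upsilon_{X}^{-}\,dx)$: it acts by integration against the kernel $k(x,x')$ — the Lebesgue kernel of $\cF_{h}^{*}M_{\Upsilon_{Y}^{-}}\cF_{h}$, where $M_{g}$ denotes multiplication by $g$ — with respect to the measure $\Upsilon_{X}^{-}(x')\,dx'$. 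Carrying the two weights \emph{inside} the $L^{2}$ spaces is the point: it is what prevents an uncontrolled factor $\sqrt{\Upsilon_{X}^{-}(x)}$ from appearing in front of the Schur integral, which would otherwise spoil the exponent since $\delta_{1}^{-}\ge\delta_{1}^{+}$.

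Next I would compute and estimate $k$. Writing $\Upsilon_{Y}^{-}=\tfrac14 h^{-\delta_{2}^{-}}\,\mu_{2}*\1_{B(0,2h)}$ gives, up to a sign convention, $k(x,x')=c_{d}\,h^{-d-\delta_{2}^{-}}\,\widehat{\mu_{2}}\!\big(\tfrac{x-x'}{h}\big)\,\widehat{\1_{B(0,2h)}}\!\big(\tfrac{x-x'}{h}\big)$. Using $|\widehat{\1_{B(0,2h)}}|\le c_{d}h^{d}$, the stationary-phase decay of $\widehat{\1_{B(0,1)}}$, the hypothesis $|\widehat{\mu_{2}}(\xi)|\lesssim|\xi|^{-\alpha}$ on $|\xi|\le\diam(K_{2})h^{-1}$, and the trivial bound near the origin, I obtain
\begin{equation*}
|k(x,x')|\ \lesssim\ h^{-\delta_{2}^{-}}\min\{1,(|x-x'|/h)^{-\alpha}\},\qquad |x-x'|\le\diam(K_{2}),
\end{equation*}
with fast decay in $|x-x'|$ beyond that range. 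Then Schur's test for the weighted kernel reads
\begin{equation*}
\|V\|^{2}=\|VV^{*}\|\ \le\ \sup_{x\in\spt\Upsilon_{X}^{-}}\ \int |k(x,x')|\,\Upsilon_{X}^{-}(x')\,dx',
\end{equation*}
and expanding $\Upsilon_{X}^{-}(x')=\tfrac14 h^{-\delta_{1}^{-}}\mu_{1}(B(x',2h))$ and using Fubini turns the right-hand side into $\tfrac14 h^{-\delta_{1}^{-}}\!\int\big(\int_{B(z,2h)}|k(x,x')|\,dx'\big)\,d\mu_{1}(z)$.

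I would finish by a dyadic decomposition in $|x-x'|$. On the shell $|x-x'|\sim 2^{-j}$, with $h\lesssim 2^{-j}\lesssim\diam(K_{1})$, the kernel bound gives $|k|\lesssim h^{\alpha-\delta_{2}^{-}}2^{j\alpha}$, the contributing centres $z$ carry $\mu_{1}$-mass $\lesssim(2^{-j})^{\delta_{1}^{+}}$ by the upper Frostman bound for $\mu_{1}$, and the $x'$-slab has Lebesgue measure $\lesssim h^{d}$; on the innermost region $|x-x'|\lesssim h$ one uses $|k|\lesssim h^{-\delta_{2}^{-}}$ and $\mu_{1}(B(x,Ch))\lesssim h^{\delta_{1}^{+}}$. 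Summing the geometric series yields
\begin{equation*}
\int |k(x,x')|\,\Upsilon_{X}^{-}(x')\,dx'\ \lesssim\ h^{\,d-\delta_{1}^{-}-\delta_{2}^{-}+\min\{\alpha,\delta_{1}^{+}\}}
\end{equation*}
(with an extra $\log(1/h)$ in the borderline case, harmlessly absorbed). Since the Fourier-decay exponent $\alpha$ in Theorem \ref{thm:main1} may always be decreased, we may assume $\alpha\le 2\delta_{1}^{+}$, so $\min\{\alpha,\delta_{1}^{+}\}\ge\alpha/2$; taking square roots gives $\|V\|\lesssim h^{\beta}$, proving Proposition \ref{prop:mainFUP}. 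The hard part will be the bookkeeping near the diagonal: when $|x-x'|\lesssim h$ the decay of $\widehat{\mu_{2}}$ gives nothing (since $\widehat{\mu_{2}}(0)=1$), and the required power of $h$ must be squeezed purely out of the $h$-thickening built into $\Upsilon_{X}^{-}$ via the upper Frostman exponent of $\mu_{1}$, which is also the step forcing the reduction to small $\alpha$. Everything else — the unitarity bookkeeping in the $TT^{*}$ step, the Fubini rearrangement, and the geometric summation — is routine.
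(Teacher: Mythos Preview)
Your approach is correct and is essentially the same as the paper's: both arguments are $TT^{*}$ followed by Schur's test, with the Fourier decay of $\mu_{2}$ controlling the off-diagonal part of the kernel and an upper Frostman bound handling the near-diagonal contribution. The only cosmetic differences are that the paper first applies Fubini to the translation averages $\omega_{t},\omega_{s}$ so as to reduce to the clean operator $B_{t}:L^{2}(\mu_{2})\to L^{2}(\mu_{1})$ (thereby avoiding your $\widehat{\1_{B(0,2h)}}$ factor), and then splits the Schur integral at the single threshold $|z-z'|=h^{1/2}$ rather than running your full dyadic decomposition.
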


\begin{proof}
Fix a bounded $u : \R^d \to \C$. For $x \in \R^d$ and $t \in \R^d$ define the translation of any $v : \R^d \to \R$ by:
$$\omega_t v (x) = v(x - t).$$
Then by Fubini's theorem
$$\|\sqrt{\Upsilon_X^-} \cF_h^*\Upsilon_Y^- u \|_{L^2(\R^d)}^2  = \frac{1}{4h^{\delta_1^-}} \int_{B(0,2h)} \|w_t \cF_h^{\ast} \Upsilon_Y^- u\|_{L^2(\mu_1)}^2 \, dt$$
and
$$\|\sqrt{\Upsilon_Y^-} u\|_{L^2(\R^d)}^2 = \frac{1}{4h^{\delta_2^-}}\int_{B(0,2h)} \|\omega_s v\|_{L^2(\mu_{2})}^2 \, ds.$$
Also
$$\omega_t \cF_h^* \Upsilon_Y^- u(x) = \frac{1}{4 h^{d/2 + {\delta_2^-}}} \int_{B(0,2h)} \int e^{2\pi i (x-t) \cdot (y-s) / h} \omega_s u(y) \, d\mu_{2}(y) \, ds.$$
Define an operator:
$$B_t u (x) = \int e^{2\pi i (x-t)\cdot y / h} u(y) \, d\mu_{2}(y)$$
so
$$ \int e^{2\pi i (x-t)\cdot (y-s) / h} \omega_s u(y) \, d\mu_{2}(y)  = e^{-2\pi i(x-t) \cdot s / h} B_t (\omega_s u)(x).$$
Thus
\begin{align*}\|\sqrt{\Upsilon_X^-} \cF_h^*\Upsilon_Y^- u \|_{L^2(\R^d)}^2 &= \frac{1}{4h^{\delta_1^-}} \int_{B(0,2h)} \|w_t \cF_h^{\ast} \Upsilon_Y^- u\|_{L^2(\mu_{1})}^2 \, dt \\
&\lesssim h^{d-{\delta_1^-}-2{\delta_2^-}} \sup_{|t| \leq 2h} \int_{B(0,2h)} \| B_t(\omega_s u)\|_{L^2(\mu_{1})}^2 \, ds.
\end{align*}
Now we are done if we can show 
$$\|B_t v\|_{L^2(\mu_{K_1})} \lesssim h^{\alpha/2} \|v\|_{L^2(\mu_{K_1})}$$
for any bounded $v$ and any $|t| \leq 2h$. Indeed, then 
\begin{align*}h^{d-{\delta_1^-}-2{\delta_2^-}}\sup_{|t| \leq 2h} \int_{B(0,2h)} \| B_t(\omega_s u)\|_{L^2(\mu_{1})}^2 \, ds &\lesssim h^{d-{\delta_1^-}-2{\delta_2^-}} \sup_{|t| \leq 2h} \int_{B(0,2h)} \| \omega_s u\|_{L^2(\mu_{1})}^2 \, ds\\
& \lesssim h^{d-{\delta_1^-}-{\delta_2^-} + \alpha/2}\|\sqrt{\Upsilon_Y^-} u\|_{L^2(\R^d)}^2.
\end{align*}
We can do this step now. By the $TT^*$ theorem \cite{GV}, we have
$$\|B_t\|_{L^2(\mu_{1}) \to L^2(\mu_{1})}^2 = \|B_tB_t^*\|_{L^2(\mu_{1}) \to L^2(\mu_{1})}$$
and we can write
$$B_tB_t^* v(z) = \int K(z,z') v(z') \, d\mu_{2}(z')$$
with kernel
$$K(z,z') := \int e^{-2\pi i (z-z')y/h} \, d\mu_{2}(y).$$
By Schur's inequality for $K(z,z')$, we obtain:
$$\|B_t\|_{L^2(\mu_{1}) \to L^2(\mu_{1})}^2 \leq \sup_{z \in K_1} \int |K(z,z')| \, d\mu_2(z').$$
By Fourier decay assumption on $\mu_2$, we have:
$$|K(z,z')| = \Big|\widehat{\mu_2}\Big(\frac{z-z'}{h}\Big)\Big| \lesssim \Big|\frac{z-z'}{h}\Big|^{-\alpha}.$$
Splitting
$$ \int |K(z,z')| \, d\mu_2(z) = \int_{|z-z'| \geq h^{1/2}} |K(z,z')| \, d\mu_2(z) + \int_{|z-z'| \leq h^{1/2}}|K(z,z')| \, d\mu_2(z) ,$$
we get that the first integral is bounded by $\lesssim h^{\alpha/2}$ and the second integral by $\lesssim h^{\delta_2^+/2}$ using the Frostman assumption of $\mu_2$. Finally, we know that $2\alpha \leq \delta_2^+$, so the claim is done.
\end{proof}

\section*{Acknowledgements}

We thank Amir Algom, Semyon Dyatlov, Jonathan Fraser, Antti K\"aenm\"aki, Connor Stevens, Sascha Troscheit and Meng Wu for useful discussions during the preparation of this manuscript, in particular Amir Algom for coordinating the submission of this and the independent work \cite{AHW3} simultaneously.

\bibliographystyle{plain}

\vspace{0.1in}

\end{document}